\newtheorem{theorem}{Theorem}[section]
\newtheorem{theoreml}{Theorem}[section]
\newtheorem{lemma}[theorem]{Lemma}
\newtheorem{proposition}[theorem]{Proposition}
\newtheorem{question}[theorem]{Question}
\newtheorem{corollary}[theorem]{Corollary}
\newtheorem{corollaryl}[theorem]{Corollary}
\newtheorem{fact}[theorem]{Fact}
\theoremstyle{definition}
\newtheorem*{definition*}{Definition}
\newtheorem{example}[theorem]{Example}
\theoremstyle{remark}
\newtheorem{remark}[theorem]{Remark}
\numberwithin{equation}{section}
\newcommand{\frakc}{\mathfrak{c}}
\newcommand{\eps}{\varepsilon}
\newcommand{\sd}{\sigma\delta}
\newcommand{\fs}{\mathbb{F}_{\sigma}}
\newcommand{\fsd}{\mathbb{F}_{\sd}}
\newcommand{\R}{\mathbb{R}}
\newcommand{\Q}{\mathbb{Q}}
\newcommand{\F}{\mathbb{F}}
\newcommand{\E}{\mathbb{E}}
\renewcommand{\O}{\mathbb{O}}
\newcommand{\aA}{\mathcal{A}}
\newcommand{\fF}{\mathcal{F}}
\newcommand{\gG}{\mathcal{G}}
\newcommand{\sS}{\mathcal{S}}
\newcommand{\uU}{\mathcal{U}}
\newcommand{\zZ}{\mathcal{Z}}
\DeclareMathOperator{\Exh}{Exh}
\DeclareMathOperator{\Fin}{Fin}
\DeclareMathOperator{\id}{id}
\DeclareMathOperator{\intt}{int}
\newcommand{\ol}{\overline}
\newcommand{\rstr}{\restriction}
\newcommand{\sm}{\setminus}
\newcommand{\sub}{\subseteq}
\DeclareMathOperator{\supp}{supp}
\newcommand{\seq}[2]{\big\langle#1\colon\ #2\big\rangle}
\newcommand{\seqn}[1]{\big\langle#1\colon\ n\io\big\rangle}
\newcommand{\seqk}[1]{\big\langle#1\colon\ k\io\big\rangle}
\newcommand{\seql}[1]{\big\langle#1\colon\ l\io\big\rangle}
\newcommand{\seqi}[1]{\big\langle#1\colon\ i\io\big\rangle}
\newcommand{\clopen}[1]{\left[#1\right]}
\newcommand{\ctblsub}[1]{\left[#1\right]^\omega}
\newcommand{\iA}{\in\aA}
\newcommand{\io}{\in\omega}
\newcommand{\wo}{{\wp(\omega)}}
\newcommand{\bo}{{\beta\omega}}
\newcommand{\os}{\omega^*}
\newcommand{\ostar}{\os}
\newcommand{\cso}{\ctblsub{\omega}}
\newcommand{\elli}{{\ell_\infty}}
\newcommand{\Cantor}{2^\omega}
\newcommand{\noproof}{\hfill$\Box$}
\begin{document}

\title[The Josefson--Nissenzweig theorem and filters on $\omega$]{The Josefson--Nissenzweig theorem and filters on $\omega$}
\author[W. Marciszewski]{Witold Marciszewski}
\address{Institute of Mathematics and Computer Science, University of Warsaw, Warsaw, Poland.}
\email{wmarcisz@mimuw.edu.pl}
\author[D.\ Sobota]{Damian Sobota}
\address{Kurt G\"odel Research Center, Department of Mathematics, Vienna University, Vienna, Austria.}
\email{damian.sobota@univie.ac.at}
\thanks{The first author was partially supported by the NCN (National Science Centre, Poland) research grant no.\ 2020/37/B/ST1/02613. The second author was supported by the Austrian Science Fund (FWF), research grant no. I 4570-N}

\subjclass[2020]{Primary: 03E75, 28A33, 54A20. Secondary: 46E15, 54C35, 60B10.}
%

\keywords{Filters on countable sets, Josefson--Nissenzweig theorem, spaces of continuous functions, convergence of measures, non-pathological submeasures, density ideals.}

\begin{abstract}
For a free filter $F$ on $\omega$, endow the space $N_F=\omega\cup\{p_F\}$, where $p_F\not\in\omega$, with the topology in which every element of $\omega$ is isolated whereas all open neighborhoods of $p_F$ are of the form $A\cup\{p_F\}$ for $A\in F$. Spaces of the form $N_F$ constitute the class of the simplest non-discrete Tychonoff spaces. The aim of this paper is to study them in the context of the celebrated Josefson--Nissenzweig theorem from Banach space theory. We prove, e.g., that, for a filter $F$, the space $N_F$ carries a sequence $\langle\mu_n\colon n\in\omega\rangle$ of normalized finitely supported signed measures such that $\mu_n(f)\to 0$ for every bounded continuous real-valued function $f$ on $N_F$ if and only if $F^*\le_K\mathcal{Z}$, that is, the dual ideal $F^*$ is Kat\v{e}tov below the asymptotic density ideal $\mathcal{Z}$. Consequently, we get that if $F^*\le_K\mathcal{Z}$, then: (1) if $X$ is a Tychonoff space and $N_F$ is homeomorphic to a subspace of $X$, then the space $C_p^*(X)$ of bounded continuous real-valued functions on $X$ contains a complemented copy of the space $c_0$ endowed with the pointwise topology, (2) if $K$ is a compact Hausdorff space and $N_F$ is homeomorphic to a subspace of $K$, then the Banach space $C(K)$ of continuous real-valued functions on $K$ is not a Grothendieck space. The latter result generalizes the well-known fact stating that if a compact Hausdorff space $K$ contains a non-trivial convergent sequence, then the space $C(K)$ is not Grothendieck. 
\end{abstract}


\maketitle


\section{Introduction}

We start by recalling some standard notation. Let $X$ be a Tychonoff space. By $C_p(X)$ we denote the space of all continuous real-valued functions on $X$ endowed with the pointwise topology. $C_p^*(X)$ denotes the subspace of $C_p(X)$ consisting of all bounded functions. If $\mu$ is a Borel measure on $X$ and $f\in C_p(X)$, then we write $\mu(f)=\int_Xfd\mu$. We also say that $\mu$ is finitely supported if it can be written in the form $\mu=\sum_{i=1}^n\alpha_i\delta_{x_i}$ for some distinct points $x_1,\ldots,x_n\in X$ and real numbers $\alpha_1,\ldots,\alpha_n\in\R$, $n\io$; in this case we write $\|\mu\|=\sum_{i=1}^n\big|\alpha_i\big|$ and have $\mu(f)=\sum_{i=1}^n\alpha_if\big(x_i\big)$ for every $f\in C_p(X)$ (see Section \ref{sec:prelim} for more details). 

Let $F$ be a free filter on $\omega$. Endow the set $N_F=\omega\cup\big\{p_F\big\}$, where $p_F$ is a fixed point not belonging to $\omega$, with the topology defined in the following way:
\begin{itemize}
	\item every point of $\omega$ is isolated in $N_F$, i.e. $\{n\}$ is open for every $n\io$,
	\item a set $U\sub N_F$ is an open neighborhood of $p_F$ if and only if $U=A\cup\big\{p_F\big\}$ for some $A\in F$.
\end{itemize}
It is immediate that $N_F$ is a countable non-discrete Tychonoff (in fact, normal) space.

Spaces of the form $N_F$ naturally appear in many settings, e.g. they play an essential role in $C_p$-theory, where they have been used to provide many important examples of metrizable spaces $C_p(X)$, see e.g. \cite{vMill01}. The main purpose of this paper is to study for which free filters $F$ on $\omega$ the space $N_F$ has any of the following two properties.

\begin{definition*}\label{def:jnp_bjnp}
A Tychonoff space $X$ has \textit{the Josefson--Nissenzweig property} (resp. \textit{the bounded Josefson--Nissenzweig property}), or shortly \textit{the JNP} (resp. \textit{the BJNP}), if $X$ admits \textit{a JN-sequence} (resp. \textit{a BJN-sequence}), that is, a sequence $\seqn{\mu_n}$ of finitely supported measures on $X$ such that $\big\|\mu_n\big\|=1$ for every $n\io$ and $\mu_n(f)\to0$ for every $f\in C_p(X)$ (resp. $f\in C_p^*(X)$).
\end{definition*}

As the names suggest, these two properties are closely related to the famous Josefson--Nissenzweig theorem from Banach space theory stating that for every infinite-dimensional Banach space $E$ there exists a sequence $\seqn{x_n^*}$ in the dual space $E^*$ such that $\big\|x_n^*\big\|=1$ for every $n\io$ and $x_n^*(x)\to0$ for every $x\in E$ (that is, $\seqn{x_n^*}$ is weak* null). They were introduced and studied in \cite{BKS1}, \cite{KMSZ}, \cite{KSZprod}, \cite{KSZgroth}, and \cite{MSZ}, in the context of the Separable Quotient Problem for spaces $C_p(X)$ as well as in order to investigate Grothendieck Banach spaces of the form $C(K)$ (see below). Both of the contexts, despite originating in functional analysis, have deep connections with set theory (as demonstrated e.g. in \cite{BKS0}, \cite{Bre06}, \cite{HLO87}, \cite{KS18}, \cite{DSsurv}, \cite{SZForExt}, and \cite{SZadd}). 

\medskip

For many examples of spaces with or without the (B)JNP, we refer the reader to \cite{KMSZ}, \cite{KSZprod}, and \cite{KSZgroth}. It is, however, immediate that if a space $X$ contains a non-trivial convergent sequence, then $X$ has the JNP (see Lemma \ref{lemma:basic_jnp}.(1)), the converse though does not hold (see Example \ref{example:schachermayer}). On the other hand, $\bo$, the \v{C}ech--Stone compactification of $\omega$, is an example of a space without the JNP (see Lemma \ref{lemma:basic_jnp}.(2)). The JNP and BJNP coincide, of course, in the class of pseudocompact spaces but not in the class of all Tychonoff spaces---in \cite[Example 4.2]{KMSZ} it was proved that the space $N_{F_d}$, where $F_d$ stands for the filter dual to the standard asymptotic density ideal $\zZ$ (see Example \ref{example:idealZ} for the definition), has the BJNP but not the JNP. 

\medskip

Let us describe our main results. We first obtain that, for a given free filter $F$ on $\omega$, the existence of a JN-sequence on the space $N_F$ is actually equivalent to the existence of a non-trivial convergent sequence in $N_F$ as well as to the existence of a Kat\v{e}tov reduction from the Fr\'{e}chet filter $Fr$ to $F$ (see Section \ref{sec:prelim} for necessary definitions and the beginning of Section \ref{sec:complexity} for the proof).


\begin{theoreml}\label{theorem:main_theorem_JNP}
Let $F$ be a free filter on $\omega$. Then, the following are equivalent:
\begin{enumerate}
	\item $N_F$ has the JNP,
	\item $N_F$ contains a non-trivial convergent sequence,
	\item $F\equiv_K Fr$.
\end{enumerate}
\end{theoreml}

For any non-trivial convergent sequence $\seqn{x_n\io}$ in a given space $N_F$, the sequence $\seqn{\mu_n}$ of finitely supported probability measures defined, for every $n\io$, by the equality $\mu_n=\delta_{x_n}$ satisfies the condition that $\lim_{n\to\infty}\mu_n(A)=1$ for every $A\in F$ (see Section \ref{sec:conv_seq}). A similar statement also characterizes the bounded Josefson--Nissenzweig property for spaces $N_F$ (for the proof, see Section \ref{sec:nf_bjnp_prob}).

\begin{theoreml}\label{theorem:main_theorem_BJNP_prob}
Let $F$ be a free filter on $\omega$. Then, $N_F$ has the BJNP if and only if there is a (disjointly supported) sequence $\seqn{\mu_n}$ of finitely supported probability measures on $N_F$ such that:
\begin{enumerate}
	\item 
$\supp\big(\mu_n\big)\sub\omega$ for every $n\io$,
	\item $\lim_{n\to\infty}\mu_n(A)=1$ for every $A\in F$.
\end{enumerate}
\end{theoreml}

The above measure-theoretic characterization of spaces $N_F$ having the BJNP  can be translated to the one using Kat\v{e}tov reductions of the filter $\zZ^*$ (dual to the asymptotic density ideal $\zZ$) to filters $F$, similar to the equivalence (1)$\Leftrightarrow$(3) in Theorem \ref{theorem:main_theorem_JNP}. For definitions of various classes of ideals, see Section \ref{sec:complexity}. The next theorem follows immediately from Theorems \ref{theorem:nf_bjnp_nonpath} and \ref{theorem:nf_bjnp_z}.

\begin{theoreml}\label{theorem:main_theorem_BJNP}
Let $F$ be a free filter on $\omega$. Then, the following are equivalent:
\begin{enumerate}
	\item $N_F$ has the BJNP,
	\item there is a density ideal $I$ on $\omega$ such that $F\sub I^*$,
	\item there is a non-pathological ideal $I$ on $\omega$ such that $F\sub I^*$,
	\item $F\le_K\zZ^*$.
\end{enumerate}
\end{theoreml}

As a consequence, we get that if $F$ is dual to a density ideal or to a summable ideal, then $N_F$ has the BJNP (Corollaries \ref{cor:density_bjnp} and \ref{cor:summable_bjnp}). It also follows that every non-pathological ideal is contained in some density ideal (Corollary \ref{cor:nonpath_sub_dens}). Another corollary is that every filter $F$ such that the space $N_F$ has the BJNP is contained in an $\fsd$ P-filter (Corollary \ref{cor:nf_bjnp_fsd}), so in particular $F$ is meager and of measure zero (Corollary \ref{cor:nf_bjnp_meager_meas0}). The converse to the latter two corollaries however does not hold, in Section \ref{sec:fs_no_bjnp} we describe an $\fs$ P-filter whose space $N_F$ does not have the BJNP. The existence of such an example, together with the fact that every summable ideal is $\fs$, means that neither topological properties of a free filter $F$ on $\omega$, nor topological properties of the corresponding function spaces $C_p\big(N_F\big)$ and $C^*_p\big(N_F\big)$  determine whether the space $N_F$ has the JNP or the BJNP. More precisely, if $F$ and $G$ are any uncountable $\fs$ filters, then they are homeomorphic to the space $\mathbb{Q}\times 2^\omega$ (see \cite{vE}), and all of the spaces $C_p\big(N_F\big)$, $C_p\big(N_G\big)$, $C^*_p\big(N_F\big)$, and $C^*_p\big(N_G\big)$ are homeomorphic (see \cite{DMM}).

Using Theorems \ref{theorem:main_theorem_JNP} and \ref{theorem:main_theorem_BJNP}, as well as results from \cite{GGMA16} which show the rich structure of the class of summable ideals equipped with the Kat\v{e}tov preordering, in Section \ref{sec:summable} we indicate that the classes of those filters $F$ on $\omega$ for which the spaces $N_F$ have the JNP or the BJNP are also large.


\begin{corollaryl}\label{cor:main_corollary_continuum}
There are families $\fF_1$ and $\fF_2$, each consisting of continuum many pairwise non-isomorphic free $\fs$ P-filters on $\omega$, such that:
\begin{enumerate}[(A)]
	\item for every $F\in\fF_1$ the space $N_F$ has the JNP,
	\item for every $F\in\fF_2$ the space $N_F$ has the BJNP but does not have the JNP.
\end{enumerate}
\end{corollaryl}

\begin{corollaryl}\label{cor:main_corollary_2continuum}
There exist families $\fF_3$,  $\fF_4$, and $\fF_5$, each consisting of $2^{\frakc}$ many pairwise non-isomorphic free filters on $\omega$, such that:
\begin{enumerate}[(a)]
	\item for every $F\in\fF_3$ the space $N_F$ has the JNP,
	\item for every $F\in\fF_4$ the space $N_F$ has the BJNP but does not have the JNP,
	\item for every $F\in\fF_5$ the space $N_F$  does not have the BJNP.
\end{enumerate}
\end{corollaryl}

We also provide some applications of our results to analysis. The first one is related to the famous long-standing open Separable Quotient Problem (for Banach spaces), asking whether every infinite-dimensional Banach space admits a separable infinite-dimensional quotient. The problem has a positive answer in the case of Banach spaces $C(K)$ of continuous real-valued functions on infinite compact spaces $K$ endowed with the supremum norm. Hence, it is natural to pose its $C_p$-analogon and ask whether for every infinite space $X$ the space $C_p(X)$ admits a separable infinite-dimensional quotient. This version of the problem has recently gained much attention, see e.g. \cite{BKS0}, \cite{BKS1}, \cite{KS18}, and is also open. 
Using a result of \cite{KMSZ}, Theorem \ref{theorem:main_theorem_BJNP}, and Corollary \ref{cor:main_corollary_2continuum}, we obtain the following sufficient condition for spaces $C_p^*(X)$ (and hence for spaces $C_p(X)$ for $X$ pseudocompact) to contain a complemented copy of the space $(c_0)_p=\big\{x\in\R^\omega\colon\ x(n)\to0\big\}$, endowed with the pointwise topology, as well as we construct a large class of ``minimal'' spaces $Y$ such that, for a given space $X$, the space $C_p^*(X)$ has a complemented copy of $(c_0)_p$ provided that $Y$ embeds into $X$ (see Section \ref{sec:tychonoff} for details on arguments).

\begin{corollaryl}\label{cor:corollary_C}
Let $F$ be a free filter on $\omega$ such that $F\le_K\zZ^*$. If $X$ is a space such that $N_F$ homeomorphically embeds into $X$, then $C_p^*(X)$  contains a complemented copy of the space $(c_0)_p$.
\end{corollaryl}

The second main application of our results concerns Banach spaces which are Grothendieck. Recall that a Banach space $E$ is \textit{Grothendieck} (or has \textit{the Grothendieck property}) if every weak* null sequence in the dual space $E^*$ is also weakly null. The class of Grothendieck spaces contains such important objects as reflexive spaces, the space $\elli$ of bounded sequences or, more generally, spaces $C(K)$ for $K$ extremely disconnected (\cite{Gro53}), von Neumann algebras (\cite{Pfi94}), the space $H^\infty$ of bounded analytic functions on the open unit disc (\cite{Bou83}), etc. It is however an open problem of characterizing  compact spaces $K$ such that the space $C(K)$ is Grothendieck (see \cite[Section 3]{Die73}). It is well-known that if a compact space $K$ contains a non-trivial convergent sequence, then the space $C(K)$ is not Grothendieck. The latter fact may be stated in the following filter-like way: if the space $N_{Fr}$, where $Fr$ is the Fr\'{e}chet filter on $\omega$, homeomorphically embeds into a compact space $K$, then $C(K)$ is not a Grothendieck space. Using the results provided above, we generalize this folklore fact in the following way (for details, see Section \ref{sec:tychonoff}).

\begin{corollaryl}\label{cor:corollary_E}
Let $F$ be a free filter on $\omega$ such that $F\le_K\zZ^*$. If $K$ is a compact space such that $N_F$ homeomorphically embeds into $K$, then $C(K)$ is not a Grothendieck space.
\end{corollaryl}

Combining Theorems \ref{theorem:main_theorem_JNP} and \ref{theorem:main_theorem_BJNP} together with Corollaries \ref{cor:main_corollary_2continuum}.(b), \ref{cor:corollary_C}, and \ref{cor:corollary_E}, we obtain the following result.

\begin{corollaryl}\label{cor:corollary_F}
There exists a family $\fF$ of $2^\frakc$ many pairwise non-homeomorphic countable infinite spaces with exactly one non-isolated point and without any non-trivial convergent sequences, and such that, for every space $Y\in\fF$, if $Y$ homeomorphically embeds into a space $X$, then $C_p^*(X)$  contains a complemented copy of the space $(c_0)_p$ and if $X$ is compact, then the Banach space $C(X)$ is not a Grothendieck space. 
\end{corollaryl}

\medskip

The paper is organized as follows. Section \ref{sec:prelim} contains the description of our notation and terminology. In Section \ref{sec:af_sf_nf} we recall basic topological facts concerning spaces of the form $N_F$ and their \v{C}ech--Stone compactifications $S_F=\beta\big(N_F\big)$. 
Section \ref{sec:conv_seq} is devoted to the study for which filters $F$ the spaces $N_F$ and $S_F$ contain non-trivial convergent sequences. In Section \ref{sec:nf_char_jnp_bjnp} we provide several reformulations and characterizations of the (bounded) Josefson--Nissenzweig property for spaces of the form $N_F$ and $S_F$. In Section \ref{sec:complexity} we investigate the Kat\v{e}tov preordering on the class of those filters $F$ for which the spaces $N_F$ have the BJNP. Section \ref{sec:tychonoff} is devoted to applications of obtained results to general Tychonoff spaces and Grothendieck $C(K)$-spaces.

\subsection*{Acknowledgements}

The authors would like to thank Piotr Borodulin-Nadzieja, Piotr Koszmider, Arturo Mart\'{\i}nez-Celis, Grzegorz Plebanek, Jacek Tryba, and Lyubomyr Zdomskyy, for providing helpful comments and ideas which allowed the authors to obtain results presented in this paper. The authors are also very grateful to the anonymous referee whose remarks and suggestions led to a significant improvement in the presentation of the results.

\section{Preliminaries\label{sec:prelim}}

By $\omega$ we denote the first infinite (countable) cardinal number. By $\frakc$ we denote the continuum, i.e. the cardinality of the real line $\R$. For every $k,n\io$ such that $k<n$ we set $[k,n]=\{k,k+1,\ldots,n\}$ and $[k,n)=[k,n]\sm\{n\}$. 

If $X$ is a set, then $|X|$ denotes the cardinality of $X$ and $\wp(X)$ denotes the family of all subsets of $X$. 
As usual, $\id_X$ denotes the identity function on $X$. The complement $X\sm Y$ of a subset $Y$ of $X$ is also denoted by $Y^c$. For a family $S\sub\wp(X)$ we put $S^*=\{Y\sub X\colon X\sm Y\in S\}$; $S^*$ is said to be \textit{dual} to $S$. If $A$ is a subset of $X$, then we also put $S\rstr A=\{B\cap A\colon B\in S\}$. For two sets $A$ and $B$ the relation $A\sub^*B$ means that the difference $A\sm B$ is finite.

If $A$ is a non-empty set, then a family $F\sub\wp(A)$ is \textit{a filter on $A$} if $\emptyset\not\in F$, $A\in F$, $\big|\bigcap F\big|\le 1$, and $F$ is closed under finite intersections and taking supersets. A family $I\sub\wp(A)$ is \textit{an ideal on $A$} if $I^*$ is a filter on $A$. If $A$ is infinite, then by $Fr(A)$ we denote \textit{the Fr\'echet filter} on $A$, i.e. $Fr(A)=\{B\in\wp(A)\colon A\sm B\text{ is finite}\}$. If $A=\omega$, then we simply write $Fr=Fr(\omega)$. If $A\sub B$ are both infinite sets, then $Fr(A,B)$ denotes the filter $\{C\in\wp(B)\colon A\sm C\text{ is finite}\}$ on $B$. We have $Fr=Fr(\omega,\omega)$. The ideal $Fr^*$, dual to $Fr$ and containing only finite subsets of $\omega$, will be denoted by $Fin$. 
$\cso$ denotes the family of all infinite subsets of $\omega$.

If $F$ is a filter on a set $A$, then $F$ is \textit{free} if $\bigcap F=\emptyset$, and $F$ is \textit{principal} if $\bigcap F$ is a singleton and $\bigcap F\in F$. Note that, for a filter $F$ on $\omega$, $F$ is free if and only if $Fr\sub F$ if and only if $Fin\sub F^*$. $F$ is \textit{an ultrafilter on $A$} if $F$ is a maximal filter (with respect to inclusion) or, equivalently, if for every $B\in\wp(A)$ either $B\in F$ or $B^c\in F$.

A filter $F$ on $\omega$ is \textit{a P-filter} if for every sequence $\seqn{A_n\in F}$ there is $A\in F$ such that $A\sub^* A_n$ for every $n\io$. An ideal $I$ on $\omega$ is \textit{a P-ideal} if its dual filter $I^*$ is a P-filter, that is, if for every sequence $\seqn{A_n\in I}$ there is $A\in I$ such that $A_n\sub^*A$ for every $n\io$. 

\medskip

If $f\colon\omega\to\omega$ is a function and $\aA\sub\wo$, then we set $f(\aA)=\big\{A\in\wo\colon\ f^{-1}[A]\in\aA\big\}$. Let $F$ and $G$ be free filters on $\omega$. We say that $F$ is \textit{Kat\v{e}tov below} $G$, denoting $F\le_K G$, if there is a function $f\colon\omega\to\omega$ such that $F\sub f(G)$. Note that if $G\neq Fr$ (that is, there exists co-infinite $A\in G$), then we may assume that $f$ is a surjection. If $F\le_K G$ and $G\le_K F$, then we say that $F$ and $G$ are \textit{Kat\v{e}tov equivalent}, denoting $F\equiv_K G$. Note that if $F\sub G$, then $F\le_K G$ and that fact is witnessed by the identity function on $\omega$. Thus, the Fr\'echet filter $Fr$ is Kat\v{e}tov below any free filter $G$.

Similarly, we say that $F$ is \textit{Rudin--Keisler below} $G$, denoting $F\le_{RK} G$, if there is a function $f\colon\omega\to\omega$ such that $f(G)=F$. Obviously, if $F\le_{RK} G$, then $F\le_K G$. If $F\le_{RK} G$ and $G\le_{RK} F$, then we say that $F$ and $G$ are \textit{Rudin--Keisler equivalent}, denoting $F\equiv_{RK} G$. 

We say that filters $F$ and $G$ are \textit{isomorphic} if there is a bijection $f\colon\omega\to\omega$ such that $f(G)=F$. Obviously, if $F$ and $G$ are isomorphic, then $F\equiv_{RK} G$; if $F$ and $G$ are ultrafilters, then the latter implication can be reversed.

We also apply the above nomenclature in the natural way to ideals (via their dual filters).

\medskip

Throughout the paper we assume that every topological space considered by us is \textit{Tychonoff}. Let $X$ be a space. A sequence $\seqn{x_n}$ of points in $X$ converging to some $x\in X$ is \textit{non-trivial} if $x_n\neq x_m$ for every $n\neq m\io$ and $x_n\neq x$ for every $n\io$. For a subset $Y\sub X$, its closure in $X$ is denoted by $\ol{Y}^X$. $Y$ is \textit{a P-set} in $X$ if the intersection of countably many open sets containing $Y$ contains $Y$ in its interior. A point $x\in X$ is \textit{a P-point} in $X$ if the singleton $\{x\}$ is a P-set in $X$. $\beta X$ denotes the \v{C}ech--Stone compactification of $X$. As usual, we write shortly $\ostar=\bo\sm\omega$. 

Given two spaces $X$ and $Y$, $X\cong Y$ denotes that they are homeomorphic.

If $X$ is a space, then by $C(X)$ and $C^*(X)$ we denote the set of all continuous real-valued functions on $X$ and the set of all bounded continuous real-valued functions on $X$, respectively. A subspace $Y$ of $X$ is \textit{$C$-embedded} (resp. \textit{$C^*$-embedded}) in $X$ if for every function $f\in C(Y)$ (resp. $f\in C^*(Y)$) there is a function $f'\in C(X)$ (resp. $f'\in C^*(X)$) such that $f=f'\rstr Y$. $C_p(X)$ and $C_p^*(X)$ respectively denote the spaces $C(X)$ and $C^*(X)$ endowed with the pointwise topology. 

We denote the Cantor space endowed with its standard topology by $\Cantor$. When we speak about measurability properties of subsets of $\Cantor$, then we always mean \textit{the standard product measure} on $\Cantor$. Each $A\in\wo$ can be associated with its characteristic function $\chi_A\colon\omega\to\{0,1\}$ and hence $A$ may be treated as an element of $\Cantor$. Thus, every filter or ideal on $\omega$ may be considered as a subset of $2^\omega$ and so we can talk about its topological and measure-theoretic features like meagerness, measurability, Borel complexity, etc. Similar remarks of course also hold for any arbitrary set $X$, the product space $2^X$, and subfamilies of the power set $\wp(X)$.

\medskip

Let $\aA$ be a Boolean algebra. A family $\uU\sub\aA$ is \textit{an ultrafilter} on $\aA$ if it is a maximal family (with respect to inclusion) such that $0_\aA\not\in\uU$, for every $A,B\in\uU$ we have $A\wedge B\in\uU$, and for every $A,B\in\aA$ if $A\in\uU$ and $A\le B$, then $B\in\uU$. $St(\aA)$ denotes the Stone space of $\aA$, i.e. the standard totally disconnected compact space of all ultrafilters on $\aA$. If $A\iA$, then $\clopen{A}_\aA$ denotes the clopen subset of $St(\aA)$ corresponding via the Stone duality to $A$. Note that $\wo$ is a Boolean algebra when endowed with the standard set-theoretic operations. Recall that $St(\wo)\cong\bo$ and $St(\wo/Fin)\cong\ostar$. For every $A\in\wo$ we will write $\clopen{A}_\omega$ instead of $\clopen{A}_{\wo}$ and $\clopen{A}_\omega^*$ instead of $\clopen{A}_{\wo/Fin}$ for the corresponding clopen subsets of $\bo$ and $\ostar$, respectively. Of course, $\clopen{A}_\omega^*=\clopen{A}_\omega\sm\omega$ for all $A\in\wo$. Note also that a free P-filter on $\omega$ which is an ultrafilter on $\wo$ is a P-point in $\ostar$.

\medskip

Let $X$ be a space. By $Bor(X)$ we denote the Borel $\sigma$-field of $X$. If $\mu\colon Bor(X)\to\R$ is a $\sigma$-additive regular signed measure which has bounded \textit{total variation}, that is, $\|\mu\|=\sup\big\{|\mu(A)|+|\mu(B)|\colon\ A,B\in Bor(X), A\cap B=\emptyset\big\}<\infty$, then we simply say that $\mu$ is \textit{a measure on $X$}. By $|\mu|(\cdot)$ we denote \textit{the variation} of $\mu$; note that $\|\mu\|=|\mu|(X)$. $\supp(\mu)$ denotes \textit{the support} of $\mu$. If $A\in Bor(X)$, then the measure $\mu\rstr A$ on $X$ is defined by the formula $(\mu\rstr A)(B)=\mu(A\cap B)$ for every $B\in Bor(X)$. $\mu$ is \textit{a probability measure} if $\mu(A)\ge0$ for every $A\in Bor(X)$ and $\mu(X)=1$.

If $X$ is a space and $x\in X$, then $\delta_x$ denotes the one-point measure on $X$ concentrated at $x$. We say that a measure $\mu$ on $X$ is \textit{finitely supported} if there is a sequence $x_1,\ldots,x_n$ of mutually distinct points of $X$ and a sequence $\alpha_1,\ldots,\alpha_n\in\R\sm\{0\}$ such that $\mu=\sum_{i=1}^n\alpha_i\delta_{x_i}$. It follows that $\supp(\mu)=\big\{x_1,\ldots,x_n\big\}$ and that $\|\mu\|=\sum_{i=1}^n\big|\alpha_i\big|$. A sequence $\seqn{\mu_n}$ of measures on $X$ is \textit{disjointly supported} if $\supp\big(\mu_k\big)\cap\supp\big(\mu_n\big)=\emptyset$ for every $k\neq n\io$.

If $\mu$ is a measure on a space $X$ and $f\in C(X)$, then we set $\mu(f)=\int_X fd\mu$. If $\mu$ is finitely supported, then for any $f\in C(X)$ we have $\mu(f)=\sum_{x\in\supp(\mu)}\mu(\{x\})\cdot f(x)$.

\section{Spaces related to filters on $\omega$\label{sec:af_sf_nf}}

At the beginning of Introduction we recalled the standard way of assigning to a filter $F$ on $\omega$ the simple non-discrete countable Tychonoff space $N_F$. We now introduce two other objects associated with a filter $F$, the Boolean algebra $\aA_F$ and its Stone space $S_F$, and recall their relations with $N_F$. The results presented in this section are mostly standard and fairly easy, therefore we will omit their proofs.

Let $F$ be a free filter on $\omega$. By $\aA_F$ we denote the following Boolean subalgebra of $\wo$:
\[\aA_F=\big\{A\in\wo\colon\ A\in F\text{ or }A^c\in F\big\}.\]
Note that $F$ is an ultrafilter in $\aA_F$ and $\aA_F=F\cup F^*$. Put $S_F=St\big(\aA_F\big)$. Trivially, $\aA_F=\wo$ if and only if $F$ is an ultrafilter in $\wo$. For every $A\in\aA_F$, by $\clopen{A}_F$ we will denote the clopen subset of $S_F$ corresponding via the Stone duality to the element $A$ of $\aA_F$. Let also $\pi_F\colon\bo\to S_F$ denote the canonical continuous surjection defined for every ultrafilter $x\in\bo$ by the formula $\pi_F(x)=x\cap\aA_F$.

Note that $S_F$ contains a countable discrete dense subspace, consisting of all isolated points of $S_F$, which we can identify in a natural way with $\omega$, and therefore we can put $S_F^*=S_F\sm\omega$. Also, $Fin$ is an ideal in $\aA_F$ and one can show that $S_F^*$ is homeomorphic to the Stone space $St\big(\aA_F/Fin)$ of the quotient Boolean algebra $\aA_F/Fin$, or, equivalently, that the Boolean algebra of clopen subsets of $S_F^*$ is isomorphic to $\aA_F/Fin$. For every $A\in\wo$ we will also simply write $A$ for the corresponding subset of $\omega\sub S_F$. This way, if $A\in\aA_F$, then $\ol{A}^{S_F}=\clopen{A}_F$, and, conversely, if $A\in\wo$ is such that $\ol{A}^{S_F}$ is clopen, then $A\in\aA_F$ (and hence $\ol{A}^{S_F}=\clopen{A}_F$). For $A\in\aA_F$ we also write $\clopen{A}_F^*=\clopen{A}_F\cap S_F^*=\clopen{A}_F\sm\omega$; note that $\clopen{A}_F^*$ is a clopen in $S_F^*$.

There is also a special unique point $p_F\in S_F$ such that, for every $A\in\aA_F$, $p_F\in\clopen{A}_F$ if and only if $A\in F$. Formally, of course, $F=p_F$, but to focus the attention we will use the symbol $F$ when talking about the filter on $\omega$, and $p_F$ when talking about the point in $S_F$. 

\begin{lemma}\label{lemma:sf_frechet_bo}
Let $F$ be a free filter on $\omega$. Then,
\begin{enumerate}
	\item If $F=Fr$, then $S_F^*=\big\{p_F\big\}$. 
	\item If $F\neq Fr$, then $S_F^*\sm\big\{p_F\}\neq\emptyset$ and every point of $S_F^*\sm\big\{p_F\big\}$ has a clopen neighborhood in $S_F$ homeomorphic to $\bo$ and a clopen neighborhood in $S_F^*$ homeomorphic to $\omega^*$.\noproof
\end{enumerate}
\end{lemma}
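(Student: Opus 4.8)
The plan is to work directly with the ultrafilters on the Boolean algebra $\aA_F$, since the points of $S_F^*$ are precisely the non-principal ones and $p_F$ is the non-principal ultrafilter $F$ itself. I will use repeatedly that a non-principal ultrafilter $x$ on $\aA_F$ contains no finite set: every singleton $\{n\}$ lies in $\aA_F$ (as $\{n\}^c$ is cofinite and $Fr\sub F$ by freeness), so a finite $B\in x$ would be a finite join of singletons, and primeness of $x$ would place some $\{n\}$ in $x$, making $x$ principal.

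For part (1), if $F=Fr$ then $\aA_F$ is exactly the algebra of finite and cofinite subsets of $\omega$. Given $x\in S_F^*$, the ultrafilter $x$ omits all finite sets, so for each cofinite $C$ we have $C^c$ finite, hence $C^c\notin x$ and therefore $C\in x$. Thus $x$ contains every cofinite set, i.e. $x=Fr=p_F$, which yields $S_F^*=\{p_F\}$.

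For part (2), assume $F\neq Fr$; since $F$ is free, $Fr\subsetneq F$, so there is $A\in F$ with $A^c$ infinite. The key claim is the following relative-algebra computation: whenever $B\in\aA_F$ is infinite with $B\notin F$ (equivalently $B^c\in F$), one has $\aA_F\rstr B=\wp(B)$. Indeed, for any $C\sub B$ we have $B^c\sub C^c$, and since $B^c\in F$ and $F$ is closed under supersets, $C^c\in F$, so $C\in\aA_F$. By Stone duality the clopen set $\clopen{B}_F$ is then homeomorphic to the Stone space of $\aA_F\rstr B=\wp(B)\cong\wo$, that is, to $\bo$. Applying this to $B=A^c$ already shows $S_F^*\sm\{p_F\}\neq\emptyset$, since $\clopen{B}_F^*=\clopen{B}_F\cap S_F^*$ then corresponds to $\bo\sm\omega=\os\neq\emptyset$ and each of its points differs from $p_F$ (as $B\notin F$ gives $p_F\notin\clopen{B}_F$). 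For an \emph{arbitrary} $x\in S_F^*\sm\{p_F\}$ I produce the required neighborhoods by the same device: as $x\neq p_F$ are distinct ultrafilters on $\aA_F$, some set lies in exactly one of them, and passing to its complement if necessary I obtain $B\in\aA_F$ with $B\in x$ and $B\notin F$; since $x$ is non-principal, $B$ is infinite, so the claim gives that $\clopen{B}_F\cong\bo$ is a clopen neighborhood of $x$ in $S_F$ (note $x\in\clopen{B}_F$ and $p_F\notin\clopen{B}_F$). Under this homeomorphism the isolated points of $\clopen{B}_F$, namely the points of $B\sub\omega$, correspond to $\omega\sub\bo$, so restricting to the non-isolated points shows that $\clopen{B}_F^*=\clopen{B}_F\cap S_F^*$ is a clopen neighborhood of $x$ in $S_F^*$ homeomorphic to $\bo\sm\omega=\os$.

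The only genuine content is the identity $\aA_F\rstr B=\wp(B)$ for infinite $B\in\aA_F\sm F$, which is immediate from upward closure of $F$, combined with the standard Stone-dual identification of a clopen subset of $St(\aA_F)$ with the Stone space of the corresponding relative algebra. The main point requiring care is the bookkeeping among the isolated points $\omega\sub S_F$, the remainder $S_F^*$, and the point $p_F$; in particular one must check that the homeomorphism $\clopen{B}_F\cong\bo$ matches isolated points with $\omega$, so that it restricts to a homeomorphism $\clopen{B}_F^*\cong\os$.
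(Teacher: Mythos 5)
Your proof is correct and follows essentially the same route as the paper's: part (1) amounts to identifying $S_{Fr}$ as the one-point compactification of $\omega$, and part (2) rests on exactly the observation the paper uses, namely that for $B\in\aA_F\sm F$ the whole power set $\wp(B)$ sits inside $\aA_F$, so $\clopen{B}_F\cong\bo$ by Stone duality. Your explicit bookkeeping of isolated points and the relative-algebra identity $\aA_F\rstr B=\wp(B)$ just spells out what the paper leaves as "by similar arguments."
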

%

A general relation between $\bo$ and spaces $S_F$ is described by the following proposition. Intuitively, $S_F$ is made from $\bo$ by gluing together all the ultrafilters in $\bo$ extending $F$.


\begin{proposition}\label{prop:sf_comes_from_bo}
Let $F$ be a free filter on $\omega$ and $\fF$ denote the subset of $\bo$ consisting of all ultrafilters $x\in\bo$ such that $F\sub x$. 
\begin{enumerate}
	\item $\fF$ is closed in $\bo$, $\fF\sub\ostar$, and the mapping $\varphi\colon\bo/\fF\to S_F$ given for every $x\in\bo$ by the formula $\varphi\big([x]_{\fF}\big)=x\cap\aA_F$ ($=\pi_F(x)$), where $[x]_{\fF}$ denotes the equivalence class of $x$ in the quotient space $\bo/\fF$, is a homeomorphism. Moreover, if $\pi\colon\bo\to\bo/\fF$ is the canonical quotient map, then $\varphi\circ\pi=\pi_F$ and $\pi^{-1}\big(\varphi^{-1}\big(p_F\big)\big)=\fF$.
	\item For every $A\in\cso$, the following are equivalent:
\begin{enumerate}
	\item $A\sm B$ is finite for every $B\in F$,
	\item $\clopen{A}_\omega^*\sub\fF$.
	\end{enumerate}
	\item The following are equivalent:
\begin{enumerate}
	\item $\fF$ is a P-set (in $\omega^*$),
	\item $F$ is a P-filter on $\omega$,
	\item $F$ is a maximal P-filter on $\aA_F$,
	\item $p_F$ is a P-point in $S_F^*$.\noproof
\end{enumerate}
\end{enumerate}
\end{proposition}

The following result complements Proposition \ref{prop:sf_comes_from_bo}.(1), yielding a correspondence between filters on $\omega$, closed subsets of $\ostar$, and spaces of the form $S_F$.

\begin{proposition}\label{prop:sf_comes_from_bo_converse}
For every non-empty closed subset $\fF$ of $\omega^*$, the intersection $F=\bigcap\fF$ is a free filter on $\omega$ such that the spaces $\bo/\fF$ and $S_F$ are homeomorphic.\noproof
\end{proposition}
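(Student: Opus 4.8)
The plan is to reduce the statement directly to Proposition \ref{prop:sf_comes_from_bo}. That proposition produces, from a free filter $F$, the closed set $\{x\in\bo\colon F\sub x\}$ and shows that collapsing it gives $S_F$. So it suffices to prove that the closed set $\fF$ we are handed is \emph{exactly} the set of ultrafilters extending $F=\bigcap\fF$; once this is established, the desired homeomorphism is simply an instance of the earlier proposition.

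First I would record the preliminary bookkeeping. Since every point of $\omega$ is isolated in $\bo$, the set $\os$ is closed in $\bo$, so $\fF$, being closed in $\os$, is also closed in $\bo$. Next I would verify that $F=\bigcap\fF$ is a genuine free filter: as an intersection of ultrafilters it is closed under finite intersections and supersets and omits $\emptyset$, and because each $x\in\fF\sub\os$ contains the Fr\'echet filter $Fr$, we obtain $Fr\sub\bigcap\fF=F$, whence $\bigcap F=\emptyset$ and $F$ is free. Thus $F$ is a legitimate free filter to which Proposition \ref{prop:sf_comes_from_bo} applies.

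The core step is the identity
\[\fF=\big\{x\in\bo\colon\ F\sub x\big\}.\]
The inclusion $\sub$ is immediate, since $F=\bigcap\fF$ means that every member of $\fF$ contains $F$. For the reverse inclusion I would argue by contradiction, and this is where closedness of $\fF$ is indispensable. Suppose $x\in\bo$ satisfies $F\sub x$ but $x\notin\fF$. As $\fF$ is closed in $\bo$ and the clopen sets $\clopen{A}_\omega$ form a base, there is $A\in x$ with $\clopen{A}_\omega\cap\fF=\emptyset$. For every $y\in\fF$ this gives $A\notin y$, hence $A^c\in y$; therefore $A^c\in\bigcap\fF=F\sub x$. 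But then both $A$ and $A^c$ belong to the ultrafilter $x$, a contradiction. Hence $x\in\fF$, and the identity holds.

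With the identity in hand, $\fF$ coincides with the closed set associated to $F$ in Proposition \ref{prop:sf_comes_from_bo}, and the quotient $\bo/\fF$ (collapsing $\fF$ to a single point) is independent of how $\fF$ is described. Applying that proposition yields $\bo/\fF\cong S_F$, which completes the proof. I expect the only genuine obstacle to be the reverse inclusion in the displayed identity; the hypothesis that $\fF$ is closed enters precisely there, whereas the remaining steps are routine verifications or a direct appeal to Proposition \ref{prop:sf_comes_from_bo}.
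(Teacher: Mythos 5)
Your proof is correct and follows essentially the same route as the paper: both reduce the statement to Proposition \ref{prop:sf_comes_from_bo} by showing that $\fF$ is exactly the set of ultrafilters extending $F=\bigcap\fF$. The paper asserts this equivalence without proof, whereas you supply the compactness/closedness argument for the nontrivial inclusion, which is a welcome elaboration rather than a divergence.
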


\begin{corollary}\label{cor:correspondence}
There is a natural (many-to-one) correspondence between non-empty closed subsets of $\omega^*$ and spaces of the form $S_F$.\noproof
\end{corollary}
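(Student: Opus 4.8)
The plan is to obtain the statement as a direct packaging of Propositions~\ref{prop:sf_comes_from_bo} and~\ref{prop:sf_comes_from_bo_converse}, so the work is conceptual rather than computational. I would define the correspondence to be the assignment
\[
\fF\ \longmapsto\ S_{\bigcap\fF}
\]
sending a non-empty closed subset $\fF\sub\os$ to the space $S_F$ determined by the free filter $F=\bigcap\fF$. Proposition~\ref{prop:sf_comes_from_bo_converse} makes this well defined and shows that the value really is a space of the required form, since there $F=\bigcap\fF$ is free and $\bo/\fF\cong S_F$. For surjectivity onto the class of all spaces $S_F$, I would appeal to Proposition~\ref{prop:sf_comes_from_bo}: for an arbitrary free filter $F$ the set $\fF_F=\{x\in\os\colon F\sub x\}$ is closed and consists of exactly the ultrafilters extending $F$, whence $\bigcap\fF_F=F$ and the correspondence sends $\fF_F$ to $S_F$. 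Thus every space of the form $S_F$ is attained.

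The only clause carrying content is that the correspondence is \emph{many-to-one}, and here I would first clarify at which level the collapsing occurs. At the level of filters the assignment $\fF\mapsto\bigcap\fF$ is in fact injective: since $\os$ is compact, Hausdorff and zero-dimensional, every closed set is the intersection of the clopen sets containing it, and the clopen supersets of $\fF$ are precisely the sets $\clopen{A}_\omega^*$ with $A\in\bigcap\fF$; hence $\fF=\{x\in\os\colon\bigcap\fF\sub x\}$ is recovered from $\bigcap\fF$, exactly in the spirit of the proof of Proposition~\ref{prop:sf_comes_from_bo_converse}. Therefore the many-to-one phenomenon can only appear once spaces $S_F$ are identified up to homeomorphism. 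To witness it, I would use ultrafilters: for every free ultrafilter $x$ we have $\aA_x=\wo$, so $S_x=St(\aA_x)=St(\wo)\cong\bo$, while the associated closed sets are the pairwise distinct singletons $\{x\}$. All of these distinct closed subsets of $\os$ are thereby sent to a single space, namely $\bo$, which exhibits the correspondence as (very) many-to-one.

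I do not expect a substantive obstacle, as the statement is a repackaging of the two preceding propositions; the one point that needs care is fixing the precise meaning of ``many-to-one.'' Since the map on filters is a bijection, the non-injectivity must be read at the level of homeomorphism types, and the ultrafilter example above makes this unambiguous while conveniently reproving the folklore fact that every $S_F$ with $F$ an ultrafilter is just $\bo$.
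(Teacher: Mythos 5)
Your proposal is correct and matches the paper's intended argument: the corollary is just the packaging of Propositions \ref{prop:sf_comes_from_bo} and \ref{prop:sf_comes_from_bo_converse}, and the paper's own remark following the corollary witnesses the many-to-one collapse with precisely your ultrafilter example. The only cosmetic point is that for distinct free ultrafilters $x\neq y$ one has $\aA_x=\aA_y=\wp(\omega)$ and hence $S_x=S_y$ as literally equal spaces, so the collapsing already occurs before passing to homeomorphism types.
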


Note that the correspondence given by Propositions \ref{prop:sf_comes_from_bo}.(1) and \ref{prop:sf_comes_from_bo_converse} is not one-to-one, as for every two ultrafilters $x\neq y\in\omega^*$ we still have $\aA_x=\aA_y$ and so $S_x=S_y$. %
%

\begin{proposition}\label{prop:sf_bo_ultrafilter}
For every free filter $F$ on $\omega$, $S_F\cong\bo$ if and only if $F$ is an ultrafilter.\noproof
\end{proposition}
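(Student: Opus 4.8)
The plan is to reduce the homeomorphism question to a Boolean-algebraic invariant through Stone duality, and then to produce an invariant that distinguishes $\aA_F$ from $\wo$ exactly when $F$ is not maximal. One implication is immediate: if $F$ is an ultrafilter then, as already observed, $\aA_F=\wo$, whence $S_F=St(\wo)=\bo$ and in particular $S_F\cong\bo$. All the content is in the converse, which I would establish contrapositively: if $F$ is not an ultrafilter, then $S_F\not\cong\bo$.

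Since $S_F=St(\aA_F)$ and $\bo=St(\wo)$, a homeomorphism $S_F\cong\bo$ would, by Stone duality, yield a Boolean isomorphism $\aA_F\cong\wo$. I would therefore hunt for a property of Boolean algebras that is isomorphism-invariant, holds for $\wo=\wp(\omega)$, and fails for $\aA_F$ when $F$ is not maximal. The natural candidate is \emph{completeness}: $\wp(\omega)$ is complete, and completeness is preserved by isomorphisms (dually, $\bo$ is extremally disconnected and extremal disconnectedness is a homeomorphism invariant, using that $St(\bB)$ is extremally disconnected iff $\bB$ is complete). Thus it suffices to prove that $\aA_F$ is not complete whenever $F$ fails to be an ultrafilter.

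To do this, assume $F$ is not maximal and fix $D\in\wo$ with $D\not\in F$ and $D^c\not\in F$; then $D\not\in\aA_F$ and both $D,D^c$ are infinite. Every singleton $\{n\}$ lies in $\aA_F$ (its complement is cofinite, hence in $Fr\sub F$), so $\aA_F$ is atomic with atom set $\{\{n\}\colon n\io\}$, and I would show that the atoms $\{\{n\}\colon n\in D\}$ have no supremum in $\aA_F$. First I identify the upper bounds: if $A\in\aA_F$ and $D\sub A$, then $A^c\sub D^c$, so $A^c\in F$ would force $D^c\in F$, a contradiction; hence the upper bounds are precisely the sets $A\in F$ with $D\sub A$. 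Given any such $A\neq D$, picking $x\in A\sm D$ gives $A\sm\{x\}=A\cap\{x\}^c\in F$ (as $\{x\}^c\in Fr\sub F$), which still contains $D$ and is strictly smaller than $A$. Hence no upper bound is least unless it equals $D$ itself, and $D\not\in\aA_F$; so the supremum does not exist and $\aA_F$ is incomplete. Consequently $S_F=St(\aA_F)$ is not extremally disconnected and cannot be homeomorphic to $\bo$.

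I expect the one genuinely conceptual step to be the choice of the separating invariant; once completeness (equivalently, extremal disconnectedness of the Stone space) is singled out, the verification that $\aA_F$ is incomplete is the short direct computation above. The only routine point to spell out is the legitimacy of the Stone-duality reduction, namely that a homeomorphism of Stone spaces induces an isomorphism of the corresponding clopen algebras $\aA_F$ and $\wo$; this is standard but worth stating explicitly, as it is what licenses passing to a purely algebraic invariant.
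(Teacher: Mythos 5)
Your proof is correct, but it takes a genuinely different route from the paper's. The paper argues directly at the topological level: assuming $S_F\cong\bo$ and using that a homeomorphism must carry the isolated points of $S_F$ onto $\omega\sub\bo$, it invokes the standard fact that disjoint subsets of $\omega$ have disjoint closures in $\bo$; applied to a witness $A$ with $A,A^c\notin F$, this makes $\ol{A}^{S_F}$ and $\ol{A^c}^{S_F}$ complementary clopen sets, forcing $A,A^c\in\aA_F$ and hence $A\in F$ or $A^c\in F$, a contradiction. You instead pass through Stone duality to the algebra level and use completeness (equivalently, extremal disconnectedness of the Stone space) as the separating invariant, verifying by hand that the atoms $\{\{n\}\colon n\in D\}$ have no supremum in $\aA_F$. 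Both arguments lean on one standard external fact — the paper on the disjoint-closures property of $\bo$ (i.e. $C^*$-embeddedness of $\omega$), you on the correspondence between complete algebras and extremally disconnected Stone spaces. The paper's version is shorter and localizes the obstruction at a single set $A$; yours proves a bit more along the way, namely that $\aA_F$ is incomplete (so $S_F$ is not extremally disconnected) whenever $F$ is not maximal, which rules out a homeomorphism with \emph{any} extremally disconnected compactum, not just $\bo$. Your computation of the upper bounds and the strict-shrinking step $A\mapsto A\sm\{x\}$ are both sound, so no gaps.
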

%

\begin{remark}
The implication from left to right in Proposition \ref{prop:sf_bo_ultrafilter} does not hold anymore if one exchanges $\bo$ and $S_F$ with $\ostar$ and $S_F^*$, respectively, that is, there is a filter $F$ on $\omega$ such that $S_F^*$ is homeomorphic to $\ostar$, but $F$ is not maximal (consider the filter $F=\bigcap\fF$ where $\fF=\big(\ol{A}^{\bo}\sm A\big)\cup\{x\}$ for a co-infinite set $A\in\cso$ and an ultrafilter $x\in\ostar$ such that $A\not\in x$).
\end{remark}

\begin{remark}
If $F$ is a P-filter on $\omega$ such that $F\rstr A\neq Fr(A)$ for every $A\in\cso$ (e.g. $F$ is the filter $F_d$, see Example \ref{example:idealZ}), then the answer whether the space $S_F^*$ is homeomorphic to $\ostar$ depends on the assumed system of axioms: assuming the Continuum Hypothesis, with an aid of results of \cite[Sections 1.1--1.4]{vMillHBK}, one can show that $S_F^*\cong\ostar$, whereas in any model of set theory without P-points in $\ostar$ (e.g. in the Silver model, see \cite{CG19}), one has $S_F^*\not\cong\ostar$.


\end{remark}

\medskip

For every free filter $F$ on $\omega$, we distinguish a special countable subspace of $S_F$, already presented in Introduction: 
\[N_F=\omega\cup\big\{p_F\big\}.\]
It is immediate that the topology of $N_F$ inherited from $S_F$ coincides with the topology described in Introduction, that is, every point of $\omega$ is isolated in $N_F$ (as it is isolated in $S_F$) and the family consisting exactly of all (clopen) sets of the form $A\cup\big\{p_F\big\}$, where $A\in F$, is a local base of $p_F$.
Note that every open neighborhood of $p_F$ in $N_F$ is a clopen subset of $N_F$. It follows from Lemma \ref{lemma:sf_frechet_bo}, that $N_F=S_F$ if and only if $F=Fr$. For more information on spaces of the form $N_F$ see \cite[Section 4M]{GJ60} and \cite{DMM}, where properties of associated spaces of functions were studied.

The main topological relations between the spaces $N_F$ and $S_F$ are described by the following lemma, which can be proved by appealing to \cite[Theorem 6.4, Problem 4M.1]{GJ60} (for (1)) and \cite[Section 6.9, page 89]{GJ60} (for (2)).

\begin{lemma}\label{lemma:nf_cstar_embedded_sf}
For every free filter $F$, the following hold:
\begin{enumerate}
	\item the space $N_F$ is $C^*$-embedded in $S_F$,
	\item $S_F=\beta\big(N_F\big)$, i.e. $S_F$ is the \v{C}ech--Stone compactification of $N_F$. \noproof
\end{enumerate}
\end{lemma}
%

\begin{lemma}
Let $F$ be a free filter on $\omega$. For every $A\in F$, $\ol{A}^{N_F}\cong N_F$ if and only if $\omega\sm A$ is finite or $Fr(A,\omega)\subsetneq F$. \noproof
\end{lemma}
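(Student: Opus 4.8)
The plan is to identify $\ol{A}^{N_F}$ explicitly as a space of the form $N_G$, to recognize $N_F$ as a topological sum of this space with a discrete piece, and thereby reduce the homeomorphism question to whether that discrete piece can be ``absorbed''. First I would record the decomposition. Since $A\in F$ and $F$ is free, $A$ is infinite and every neighborhood of $p_F$ meets $A$, so $p_F\in\ol{A}^{N_F}$ and $\ol{A}^{N_F}=A\cup\{p_F\}$; the inherited topology makes each point of $A$ isolated and gives $p_F$ the neighborhood trace $F\rstr A$, so $\ol{A}^{N_F}$ is a copy of $N_{F\rstr A}$ for the free filter $F\rstr A$ on $A$. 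Moreover $A^c=\omega\sm A$ is clopen and discrete in $N_F$ (the neighborhood $A\cup\{p_F\}$ of $p_F$ misses it), whence $N_F\cong N_{F\rstr A}\oplus A^c$, a sum of $N_{F\rstr A}$ with a discrete space of size $|A^c|$. I would also translate the second hypothesis: $Fr(A,\omega)$ is exactly the filter generated by $Fr\cup\{A\}$, so $Fr(A,\omega)\sub F$ is automatic, and therefore $Fr(A,\omega)\subsetneq F$ is equivalent to the existence of $Y\in F$ with $A\sm Y$ infinite, i.e.\ to $F\rstr A\neq Fr(A)$; this is the form I would use.

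For the ``if'' direction I would treat the two disjuncts. If $A^c$ is finite, the summand consists of finitely many isolated points, and since free filters are invariant under finite modifications of their index set, a finite Hilbert-hotel reindexing yields $N_{F\rstr A}\oplus A^c\cong N_{F\rstr A}$. If instead $F\rstr A\neq Fr(A)$, I would choose $Y\in F\rstr A$ with $Z:=A\sm Y$ infinite (so $Z\notin F\rstr A$); then $Z$ is an \emph{infinite} clopen discrete subset of $N_{F\rstr A}$ and $N_{F\rstr A}=\ol{Y}^{N_{F\rstr A}}\sqcup Z\cong N_{(F\rstr A)\rstr Y}\oplus Z$. This lets me soak up the foreign points: $N_F\cong N_{(F\rstr A)\rstr Y}\oplus(Z\sqcup A^c)$, and since $Z\sqcup A^c$ is a countably infinite discrete space, hence homeomorphic to $Z$, this equals $N_{(F\rstr A)\rstr Y}\oplus Z\cong N_{F\rstr A}=\ol{A}^{N_F}$.

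For the ``only if'' direction I would argue by compactness. If both disjuncts fail, then $A^c$ is infinite and $F\rstr A=Fr(A)$, so $\ol{A}^{N_F}=N_{Fr(A)}$ is the one-point compactification of the countable discrete set $A$ and is therefore \emph{compact}. On the other hand $N_F$ is not compact: the open cover of $N_F$ by all singletons $\{n\}$ ($n\io$) together with the single neighborhood $A\cup\{p_F\}$ admits no finite subcover, precisely because $A^c$ is infinite. A compact space cannot be homeomorphic to a non-compact one, so $\ol{A}^{N_F}\not\cong N_F$, which completes the equivalence.

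The main obstacle is the second subcase of the ``if'' direction: the key realization that when $F\rstr A\neq Fr(A)$ the space $N_{F\rstr A}$ \emph{already} contains an infinite clopen discrete piece $Z$, which can absorb the extra discrete summand $A^c$ without changing the homeomorphism type. Everything else is bookkeeping with clopen decompositions and the fact that any two countably infinite discrete spaces are homeomorphic; the only point requiring care is to verify that each reindexing is a genuine homeomorphism, equivalently that it induces an isomorphism of the relevant neighborhood filters of the non-isolated points.
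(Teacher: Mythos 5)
Your argument is correct and is essentially the paper's proof in different clothing: the ``only if'' direction is the same compactness obstruction (if $Fr(A,\omega)=F$ then $\ol{A}^{N_F}$ is compact while $N_F$ is not when $A^c$ is infinite), and your absorption of $Z\sqcup A^c$ into $Z$ is literally the paper's homeomorphism, namely the identity on $B\cup\big\{p_F\big\}$ for a set $B\in F$ with $B\sub A$ and $A\sm B$ infinite, together with an arbitrary bijection $\omega\sm B\to A\sm B$ on the discrete remainder. The only point to tighten is your first subcase: a literal Hilbert-hotel shift need not preserve a free filter (a shift carries the filter generated by the even numbers to one concentrated on the odds), so when $A^c$ is finite you should either choose the reindexing to be the identity on a co-infinite member of $F\rstr A$ --- the same trick as in your second subcase, to which this reduces whenever $Fr(A,\omega)\subsetneq F$ --- or observe that otherwise $F=Fr$ and both spaces are copies of the convergent sequence, which is how the paper arranges its cases.
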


\medskip

We now briefly recall standard facts concerning relations between spaces $N_F$ and $N_G$ (resp. $S_F$ and $S_G$) in the case when $F$ is Kat\v{e}tov below $G$. So fix free filters $F$ and $G$ on $\omega$. For every function $f\colon\omega\to\omega$, we define the mapping $\varphi_f\colon N_G\to N_F$ as follows: $\varphi_f\big(p_G\big)=p_F$ and $\varphi_f(n)=f(n)$ for every $n\io$. It follows immediately that if $F\sub f(G)$ (so $F\le_K G$), then $\varphi_f$ is continuous and $\varphi_f^{-1}\big(p_F\big)=\big\{p_G\big\}$. Conversely, if $\varphi\colon N_G\to N_F$ is a continuous mapping such that $\varphi^{-1}\big(p_F\big)=\big\{p_G\big\}$, then the function $f_\varphi\colon\omega\to\omega$ defined as $f_\varphi=\varphi\rstr\omega$ is such that $F\sub f(G)$ (so $F\le_K G$).

\begin{proposition}\label{prop:ordering_nf}
Let $F$ and $G$ be free filters on $\omega$ and $f\colon\omega\to\omega$ a function.
\begin{enumerate}
	\item If $F\sub f(G)$ 
	and $f$ is a surjection, then $\varphi_f$ maps continuously $N_G$ onto $N_F$. In particular, if $F\le_K G$ and $G\neq Fr$, then $N_G$ can be continuously mapped onto $N_F$.
	\item $F=f(G)$ and $f$ is a bijection 
	if and only if $\varphi_f\colon N_G\to N_F$ is a homeomorphism.
	\item $F\sub G$ if and only if $\varphi_{\id_\omega}$ maps continuously $N_G$ onto $N_F$. \noproof
\end{enumerate}
\end{proposition}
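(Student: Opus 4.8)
The plan is to reduce every clause to the two facts established in the paragraph immediately preceding the statement: first, that whenever $F\sub f(G)$ the map $\varphi_f$ is continuous and satisfies $\varphi_f^{-1}(p_F)=\{p_G\}$; and second, the converse, that any continuous $\varphi\colon N_G\to N_F$ with $\varphi^{-1}(p_F)=\{p_G\}$ yields a function $f_\varphi=\varphi\rstr\omega$ with $F\sub f_\varphi(G)$. With these in hand each part becomes a short verification, the only algebraic input being the behaviour of the operation $G\mapsto f(G)$ under composition and inversion of $f$.

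For part (1) continuity of $\varphi_f$ is free from $F\sub f(G)$, while surjectivity is immediate since $\varphi_f[\omega]=f[\omega]=\omega$ (as $f$ is onto) and $\varphi_f(p_G)=p_F$. For the ``in particular'' clause I would first upgrade an arbitrary Kat\v{e}tov witness to a surjection. Starting from any $g$ with $F\sub g(G)$ together with a co-infinite $A\in G$ (which exists precisely because $G\neq Fr$), I define $f$ to agree with $g$ on $A$ and to map the infinite set $\omega\sm A$ onto all of $\omega$. Then for every $B\in g(G)$ one has $f^{-1}[B]\supseteq g^{-1}[B]\cap A\in G$, so $B\in f(G)$; hence $F\sub g(G)\sub f(G)$ with $f$ surjective, and part (1) applies.

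For part (2), in the forward direction, if $f$ is a bijection with $F=f(G)$, then $\varphi_f$ is a continuous bijection (it restricts to $f$ on $\omega$ and sends $p_G$ to $p_F$), and its set-theoretic inverse is exactly $\varphi_{f^{-1}}$. Since for a bijection the equality $F=f(G)$ is equivalent to $G=f^{-1}(F)$ (using $f[f^{-1}[B]]=B$), we get $G\sub f^{-1}(F)$, so $\varphi_{f^{-1}}$ is continuous too and $\varphi_f$ is a homeomorphism. For the converse, a homeomorphism is in particular a continuous bijection carrying the unique non-isolated point $p_G$ to the unique non-isolated point $p_F$, whence $f=\varphi_f\rstr\omega$ is a bijection of $\omega$; applying the converse continuity fact to $\varphi_f$ gives $F\sub f(G)$, and applying it to $\varphi_f^{-1}=\varphi_{f^{-1}}$ gives $G\sub f^{-1}(F)$, and combining these through bijectivity yields $F=f(G)$.

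Part (3) is the specialization $f=\id_\omega$, for which $\id_\omega(G)=G$: if $F\sub G$ then $F\sub\id_\omega(G)$ and part (1) supplies the continuous surjection, while conversely continuity of $\varphi_{\id_\omega}$ together with $\varphi_{\id_\omega}^{-1}(p_F)=\{p_G\}$ gives $F\sub\id_\omega(G)=G$. The only genuinely non-bookkeeping step is the final reduction in part (2), where I must turn the pair of inclusions $F\sub f(G)$ and $G\sub f^{-1}(F)$ into the single equality $F=f(G)$; the crux is that surjectivity of $f$ makes $f[f^{-1}[B]]=B$, so $f^{-1}[B]\in G$ forces $B\in F$, which is exactly the reverse inclusion $f(G)\sub F$.
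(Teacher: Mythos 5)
Your proposal is correct and follows essentially the same route as the paper: the paper declares (1) and (3) clear, proves the forward direction of (2) by observing that $\varphi_f^{-1}=\varphi_{f^{-1}}$ is continuous because $f^{-1}(F)=G$, and leaves the converse as "similar" — which is exactly the argument you spell out via the unique non-isolated points and the identity $f[f^{-1}[B]]=B$. Your explicit upgrade of a Kat\v{e}tov witness to a surjection likewise just reproduces the remark the paper makes before the proposition.
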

%

Regarding Proposition \ref{prop:ordering_nf}.(2), note that it is not true that the equivalence $F\equiv_{RK}G$ alone implies that $N_F$ and $N_G$ are homeomorphic. To see this, fix a co-infinite set $A\in\cso$ and let $G=Fr$ and $F=Fr(A,\omega)$. Then, $F\equiv_{RK}G$ but $N_G$ and $N_F$ are not homeomorphic, since $N_G$ is compact and $N_F$ is not. It follows in this case that $S_G$ and $S_F$ are not homeomorphic as well, or even that $S_F$ is not a continuous image of $S_G$, since $S_G$ ($=N_G$) is countable and $S_F$ contains $\bo$.

We also have an analogon of Proposition \ref{prop:ordering_nf} for spaces $S_F$ and $S_F^*$.

\begin{proposition}\label{prop:ordering_sf}
Let $F$ and $G$ be free filters on $\omega$.
\begin{enumerate}
	\item If $F\le_K G$ and $G\neq Fr$, then $S_F$ is a continuous image of $S_G$.
	\item If there is a bijection $f\colon\omega\to\omega$ such that $F=f(G)$,  
	then $S_F\cong S_G$ and $S_F^*\cong S_G^*$.
	\item If $F\sub G$, then $S_F$ is a continuous image of $S_G$. \noproof
\end{enumerate}
\end{proposition}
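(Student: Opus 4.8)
The plan is to derive each clause from the corresponding clause of Proposition~\ref{prop:ordering_nf} by applying the \v{C}ech--Stone functor, exploiting the identifications $S_F=\beta(N_F)$ and $S_G=\beta(N_G)$ provided by Corollary~\ref{cor:sf_betanf}. The general principle I will use is that any continuous map $\varphi\colon N_G\to N_F$ extends uniquely to a continuous map $\beta\varphi\colon S_G\to S_F$, since $N_F\sub S_F$ and $S_F$ is compact; moreover $\beta\varphi$ is surjective whenever $\varphi$ is, and $\beta\varphi$ is a homeomorphism whenever $\varphi$ is. Thus the whole proof is a transfer of the three statements already established for the countable spaces $N_F$ and $N_G$ to their compactifications.

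For (1) and (3) I proceed as follows. Under the hypotheses of (1), Proposition~\ref{prop:ordering_nf}.(1) supplies a continuous surjection $\varphi_f\colon N_G\to N_F$ (the surjection $f$ is available because $G\neq Fr$); under the hypothesis of (3), Proposition~\ref{prop:ordering_nf}.(3) supplies the continuous surjection $\varphi_{\id_\omega}\colon N_G\to N_F$. In either case I extend to $\beta\varphi\colon S_G\to S_F$. To see that $\beta\varphi$ is onto, I note that its image is compact, hence closed in $S_F$, and contains $\varphi(N_G)=N_F$, which is dense in $S_F$; therefore the image is all of $S_F$. This exhibits $S_F$ as a continuous image of $S_G$.

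For (2), Proposition~\ref{prop:ordering_nf}.(2) gives that $\varphi_f\colon N_G\to N_F$ is a homeomorphism, so its extension $\Phi=\beta\varphi_f\colon S_G\to S_F$ is a homeomorphism and hence $S_F\cong S_G$. To obtain $S_F^*\cong S_G^*$ I use that a homeomorphism carries isolated points to isolated points. As recorded in the proof of Proposition~\ref{prop:sf_bo_ultrafilter}, the isolated points of $S_F$ are exactly the points of $\omega$, and likewise for $S_G$; thus $\Phi$ maps $\omega\sub S_G$ bijectively onto $\omega\sub S_F$ and consequently restricts to a homeomorphism of the remainders $S_G^*=S_G\sm\omega$ onto $S_F^*=S_F\sm\omega$. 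The only point requiring a little care---and the closest thing to an obstacle---is this last bookkeeping with isolated points and the remainder; everything else is the routine functoriality of $\beta$, together with the compactness-plus-density argument for surjectivity.
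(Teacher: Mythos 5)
Your proof is correct and follows essentially the same route as the paper: part (2) is verbatim the paper's argument ($S_F=\beta(N_F)\cong\beta(N_G)=S_G$ plus preservation of isolated points), and parts (1) and (3) use the same compactness-plus-density argument for surjectivity. The only cosmetic difference is in (1), where the paper writes down the extension $\psi(x)=\{A\in\aA_F\colon f^{-1}[A]\in x\}$ explicitly and cites Stone duality for its continuity, whereas you obtain the same map abstractly from the universal property of the \v{C}ech--Stone compactification; both are valid.
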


Note that if $F$ and $G$ are two free ultrafilters on $\omega$ which are not Rudin--Keisler equivalent, or even not comparable in the sense of Kat\v{e}tov, then still $S_F=\bo=S_G$, so the converse statements to Propositions \ref{prop:ordering_sf}.(1)--(3) do not hold. By Proposition \ref{prop:ordering_nf}.(2), it follows also that two non-homeomorphic spaces $N_F$ and $N_G$ may still have the \v{C}ech--Stone compactifications which are homeomorphic, i.e., $S_F\cong S_G$.


\section{Non-trivial convergent sequences in spaces $S_F$\label{sec:conv_seq}}

We study in this section for which filters $F$ the spaces $N_F$, $S_F$, and $S_F^*$ contain non-trivial convergent sequences. We start with the following simple, but useful, consequence of Lemma \ref{lemma:sf_frechet_bo} yielding that there maybe at most one point in $S_F$, namely $p_F$, being the limit of a non-trivial convergent sequence.

\begin{lemma}\label{lemma:sf_convseq_pf}
Let $F$ be a free filter on $\omega$ such that the space $S_F$ contains a non-trivial convergent sequence $\seqn{x_n}$. Then, $p_F=\lim_{n\to\infty}x_n$. \noproof
\end{lemma}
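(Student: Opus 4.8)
The plan is to rule out every candidate limit in $S_F$ except $p_F$, using the local structure of $S_F$ recorded just before and inside Lemma \ref{lemma:sf_frechet_bo}. Write $x=\lim_{n\to\infty}x_n$. Since the sequence is non-trivial, the points $x_n$ are pairwise distinct, so $x_n=x$ for at most one index $n$; in particular the tail of the sequence still consists of infinitely many pairwise distinct points, whatever $x$ turns out to be.

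First I would dispose of the isolated points. If $x\in\omega$, then $\{x\}$ is open in $S_F$, so $x_n=x$ for all sufficiently large $n$, contradicting the fact that the $x_n$ are pairwise distinct. Hence $x\in S_F^*$. Next I would eliminate every point of $S_F^*\sm\{p_F\}$. Suppose $x\in S_F^*\sm\{p_F\}$. By the observation preceding Lemma \ref{lemma:sf_frechet_bo} together with Lemma \ref{lemma:sf_frechet_bo}.(2), the point $x$ has a clopen neighborhood $U$ in $S_F$ that is homeomorphic to $\bo$. Since $U$ is open and $x_n\to x\in U$, there is $N\io$ with $x_n\in U$ for all $n\ge N$. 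The tail $\seqf{x_n\colon n\ge N}$ is then a sequence of pairwise distinct points of $U$ converging to $x$, i.e. a non-trivial convergent sequence inside a copy of $\bo$. This contradicts the classical fact that $\bo$ contains no non-trivial convergent sequences (every convergent sequence in $\bo$ is eventually constant). Therefore $x=p_F$, as claimed.

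The only step with any content is the last ingredient, namely the absence of non-trivial convergent sequences in $\bo$; this is folklore and can be justified, e.g., by the fact that $\bo$ is an $F$-space, or directly by splitting the terms of a putative non-trivial convergent sequence into two infinite subsequences and separating them by the disjoint clopen sets coming from a partition of $\omega$, which prevents convergence. Everything else reduces to extracting a tail of the sequence and to the elementary remark that an isolated point cannot be the limit of a sequence of pairwise distinct points.
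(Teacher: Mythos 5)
Your proof is correct and follows essentially the same route as the paper: the limit must lie in $S_F^*$ because points of $\omega$ are isolated, and any point of $S_F^*\sm\{p_F\}$ is excluded via its clopen neighborhood homeomorphic to $\bo$, which admits no non-trivial convergent sequences. The only cosmetic difference is that the paper splits into the cases $F=Fr$ and $F\neq Fr$ explicitly, whereas you handle the former implicitly (the second step being vacuous when $S_F^*\sm\{p_F\}=\emptyset$), and you additionally spell out the folklore fact about $\bo$.
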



The above lemma easily implies the following result.

\begin{lemma}\label{lemma:nf_convseq_frechet}
Let $F$ be a free filter on $\omega$. Let $\seqn{x_n}$ be sequence in $N_F$ such that $x_n\neq x_m$ for every $n\neq m\io$. Then, $\seqn{x_n}$ is convergent (to $p_F$) if and only if for every $A\in F$ there is $N\io$ such that $x_n\in A$ for every $n>N$. \noproof
%
\end{lemma}
%


%

Recall that an ideal $I$ is \textit{tall} (or \textit{dense}) if for every $A\in\cso$ there is $B\in I\cap\cso$ contained in $A$. Trivially, every maximal ideal  is tall but $Fin$ itself is not tall. As a consequence of Lemma \ref{lemma:nf_convseq_frechet}, with an aid of Proposition \ref{prop:sf_comes_from_bo}.(2), we obtain the following characterization of the existence of non-trivial convergent sequences in spaces $N_F$.

\begin{proposition}\label{prop:nf_convseq_char}
Let $F$ be a free filter on $\omega$. Let $\fF$ denote the closed subset of $\ostar$ consisting of all those ultrafilters on $\omega$ which extend $F$. Then, the following are equivalent:
\begin{enumerate}
	\item $N_F$ contains a non-trivial convergent sequence,
	\item there is $X\in\cso$ such that $X\sub^*A$ for every $A\in F$,
	\item there is $X\in\cso$ such that $F\rstr X=Fr(X)$,
	\item the dual ideal $F^*$ is not tall,
	\item $\fF$ has non-empty interior in $\ostar$.\noproof
\end{enumerate}
\end{proposition}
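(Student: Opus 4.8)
The plan is to route every condition through condition (2), the existence of a set $X\in\cso$ with $X\sub^*A$ for all $A\in F$, since this is the common thread already extracted by the earlier lemmas. Two of the equivalences then cost nothing: (1)$\Leftrightarrow$(2) is precisely the second assertion of Lemma \ref{lemma:nf_convseq_frechet}, and (2)$\Leftrightarrow$(4) is exactly the content of Lemma \ref{lem:f_kequiv_fr}.

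For (2)$\Leftrightarrow$(3) I would invoke Lemma \ref{lem:nwd_fr} pointwise in the witness. Applied with $X$ in place of $A$, clause (1) of that lemma reads ``$X\sm B$ is finite for every $B\in F$'', which is just $X\sub^*B$ for all $B\in F$, i.e. the hypothesis of condition (2); its clause (2) reads $F\rstr X=Fr(X)$, i.e. condition (3). Quantifying existentially over $X\in\cso$ upgrades the pointwise equivalence of Lemma \ref{lem:nwd_fr} to the equivalence of (2) and (3).

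The condition (6) I would handle by contraposition against Corollary \ref{cor:nwd_fr}. Since $\fF$ is closed in $\ostar$, it is nowhere dense exactly when $\intt(\fF)=\emptyset$, so (6) is literally the negation of clause (3) of that corollary. Negating its clauses (1) and (2) produces, respectively, ``there is $A\in\cso$ with $A\sm B$ finite for all $B\in F$'' (which is (2)) and ``there is $A\in\cso$ with $F\rstr A=Fr(A)$'' (which is (3)); hence (6) is equivalent to (2) and (3).

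The only step carrying genuine (if modest) content is (4)$\Leftrightarrow$(5), flagged as well known in the running text; I would prove it directly by showing that (2) is equivalent to the failure of tallness of $F^*$. If $X\sub^*A$ for every $A\in F$ and $B\sub X$ is infinite, then $B\in F^*$ would force $\omega\sm B\in F$ and hence $X\cap B=B$ finite, a contradiction, so no infinite subset of $X$ lies in $F^*$ and $F^*$ is not tall. Conversely, if $F^*$ is not tall, fix an infinite $X$ none of whose infinite subsets lie in $F^*$; were $X\sm A$ infinite for some $A\in F$, the set $B=X\sm A$ would be an infinite subset of $X$ with $\omega\sm B\supseteq A\in F$, whence $B\in F^*$, again a contradiction, so $X\sub^*A$ for all $A\in F$ and (2) holds. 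Combined with (2)$\Leftrightarrow$(4) this gives (4)$\Leftrightarrow$(5). No step presents a real obstacle; the only care required is in correctly negating the three clauses of Corollary \ref{cor:nwd_fr} and in the bookkeeping of the tallness computation.
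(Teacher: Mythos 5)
Your proposal is correct and follows exactly the route the paper intends: the proposition is stated with no separate proof precisely because it is the assembly of Lemma \ref{lemma:nf_convseq_frechet} for (1)$\Leftrightarrow$(2), Lemma \ref{lem:f_kequiv_fr} for (2)$\Leftrightarrow$(4), Lemma \ref{lem:nwd_fr} and the negation of Corollary \ref{cor:nwd_fr} for (3) and (6), and the well-known tallness fact for (5). Your explicit verification of (4)$\Leftrightarrow$(5) via (2) is a correct filling-in of the step the paper leaves as ``well-known.''
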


%

We will now investigate the case when the space $S_F^*$ for a given filter $F$ contains non-trivial convergent sequences. Let us thus introduce a general scheme of constructing filters $F$ on $\omega$ such that the point $p_F$ is the limit of a convergent sequence in the space $S_F$.

Let $\seqn{A_n}$ be a sequence of pairwise disjoint (finite or infinite) non-empty subsets of $\omega$. For every $n\io$ let $F_n$ be a (not necessarily free) filter on $A_n$---we will say that the sequence \textit{$\seqn{F_n}$ is based on the sequence $\seqn{A_n}$}. Let us recall the following well-known definition, going back to Frol\'ik \cite{Fro67}:
\[\sum\big(\seqn{F_n}\big)=\Big\{A\in\cso\colon\ \big\{n\io\colon\ A\cap A_n\in F_n\big\}\in Fr\Big\}.\]
We will usually write simply $\sum F_n$ instead of $\sum\big(\seqn{F_n}\big)$. It is immediate that $\sum F_n$ is a free filter on $\omega$.

\begin{fact}
Let $\seqn{F_n}$ be a sequence of filters based on a sequence $\seqn{A_n}$. Fix $N\io$ and co-infinite $X\in\cso$. Then:
\begin{enumerate}
	\item $\bigcup_{n\ge N}A_n\in \sum F_n$,
	\item $\omega\sm A_N\in \sum F_n$, so $A_N\in\aA_{\sum F_n}$,
	\item $\sum\big(\seqn{F_n}\big)=\sum\big(\seqn{F_{n+N}}\big)$,
	\item $\sum\big(\seqn{F_n}\big)\subsetneq \sum\big(\seq{F_n}{n\in X}\big)$.
\noproof
\end{enumerate}
\end{fact}


The next lemma provides sufficient conditions for a filter $F$ so that $S_F$ contains a non-trivial convergent sequence.

\begin{lemma}\label{lemma:sf_f_convseq}
For every sequence $\seqn{F_n}$ based on a sequence $\seqn{A_n}$ and filter $F$ on $\omega$ such that $F\sub \sum F_n$, the point $p_F$ in the space $S_F$ is the limit of a non-trivial convergent sequence. More precisely:
\begin{enumerate}
	\item if there is a subsequence $\seqk{n_k}$ such that for every $k\io$ the filter $F_{n_k}$ is a principal filter on $A_{n_k}$, then there is a non-trivial sequence in $\omega$ convergent to $p_F$;
	\item if there is a subsequence $\seqk{n_k}$ such that for every $k\io$ the filter $F_{n_k}$ is a free filter on $A_{n_k}$ and $\omega\sm A_{n_k}\in F$, then there is a non-trivial sequence in $S_F^*$ convergent to $p_F$.
\end{enumerate}
\end{lemma}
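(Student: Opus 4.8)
The plan is to work directly with the topology of $S_F$ as the Stone space of $\aA_F$, using the characterization that a neighborhood base of $p_F$ consists of the clopen sets $\clopen{A}_F$ for $A\in F$, together with the fact that each $A_n\in\aA_{LF(F_n)}$ so that $\clopen{A_n}_F$ is a well-defined clopen subset of $S_F$. Since $F\sub LF(F_n)$, every basic neighborhood of $p_F$ contains some $\clopen{A}_F$ with $A\in F\sub LF(F_n)$; by definition of the limit filter this means $\{n\io\colon A\cap A_n\in F_n\}$ is cofinite. The key topological observation is that $p_F$ lies in the closure of the set of points we shall extract, precisely because the ``indices where $A$ meets $A_n$ in $F_n$'' form a cofinite set for each $A\in F$. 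First I would fix, for each $n$ in the relevant subsequence, a suitable witness point $y_n\in S_F$ inside the clopen set $\clopen{A_n}_F$.

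For part (1), for each $k$ let the principal filter $F_{n_k}$ be generated by a point $a_k\in A_{n_k}$; I take $y_k=a_k\io\sub S_F$, an isolated point. I claim $\seqk{a_k}$ converges to $p_F$. To verify this, fix any neighborhood $\clopen{A}_F\cup\{p_F\}$ of $p_F$ with $A\in F$. Since $A\in LF(F_n)$, the set $\{n\colon A\cap A_n\in F_n\}$ is cofinite, so for all large $k$ we have $A\cap A_{n_k}\in F_{n_k}$; because $F_{n_k}$ is principal generated by $a_k$, membership $A\cap A_{n_k}\in F_{n_k}$ forces $a_k\in A\cap A_{n_k}\sub A$. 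Hence $a_k\in A$ for all large $k$, which is exactly convergence of $\seqk{a_k}$ to $p_F$ by the neighborhood description of $N_F\sub S_F$ (cf. Lemma~\ref{lemma:nf_convseq_frechet}). The points are distinct since the $A_{n_k}$ are pairwise disjoint, giving a non-trivial sequence in $\omega$.

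For part (2), for each $k$ the filter $F_{n_k}$ is a free filter on the infinite set $A_{n_k}$; I pick $y_k\in S_F^*$ to be (the trace on $\aA_F$ of) an ultrafilter on $\omega$ extending $F_{n_k}$ and concentrated on $A_{n_k}$. Concretely, extend $F_{n_k}$ to a free ultrafilter $u_k$ on $A_{n_k}$ and let $y_k=\pi_F(u_k)\in S_F^*$, where $u_k$ is regarded as a point of $\ostar$; since $A_{n_k}$ is infinite and disjoint from the others, the $y_k$ are distinct and lie in $S_F^*$. Convergence to $p_F$ is checked the same way: for $A\in F$ the set $\{n\colon A\cap A_n\in F_n\}$ is cofinite, and for large $k$ we get $A\cap A_{n_k}\in F_{n_k}\sub u_k$, whence $A\in u_k$ and therefore $y_k\in\clopen{A}_F$. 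Thus every basic neighborhood $\clopen{A}_F$ of $p_F$ contains all but finitely many $y_k$, giving a non-trivial sequence in $S_F^*$ converging to $p_F$.

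The main obstacle, and the only genuinely delicate point, is the bookkeeping of where the chosen witness lives: in part (2) one must ensure $y_k\in S_F^*$ rather than accidentally collapsing to an isolated point or to $p_F$ itself. This is handled by insisting the extending ultrafilter $u_k$ be free and supported on $A_{n_k}$, together with the fact from the paragraph preceding Lemma~\ref{lemma:sf_frechet_bo} that since $A_{n_k}\notin LF(F_n)$ but $A_{n_k}\in\aA_{LF(F_n)}$, the set $A_{n_k}$ generates a genuine clopen piece of $S_F$ disjoint from $p_F$; hence $y_k\neq p_F$ and $y_k\notin\omega$. Everything else is a direct translation of the limit-filter definition into the neighborhood description of $p_F$, so no estimates are required.
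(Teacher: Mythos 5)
Your proof is correct and follows essentially the same route as the paper's: in (1) you take the generating points of the principal filters $F_{n_k}$, in (2) you take ultrafilters extending the free filters $F_{n_k}$, and in both cases convergence to $p_F$ is read off from the definition of $LF\big(F_n\big)$ via the clopen neighborhood base $\big\{\clopen{A}_F\colon A\in F\big\}$ of $p_F$. The one caveat---that ensuring $y_k\in S_F^*\sm\big\{p_F\big\}$ and the distinctness of the $y_k$ in (2) really requires $A_{n_k}\in\aA_F$ (automatic for $F=LF\big(F_n\big)$ but not for an arbitrary $F\sub LF\big(F_n\big)$, e.g.\ $F=Fr$)---is glossed over by the paper's own proof in exactly the same way, so it is not a defect of your argument relative to the original.
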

\begin{proof}
(1) For every $k\io$ let $x_k\io$ be the only point of $\bigcap F_{n_k}$. Then, for every $A\in F$ there is $K\io$ such that $x_k\in A$ for every $k>K$. It follows by Lemma \ref{lemma:nf_convseq_frechet} that $\lim_{k\to\infty}x_k=p_F$.

(2) For every $k\io$ we have $\omega\sm A_{n_k}\in F$, so there is an ultrafilter $\uU_k$ on $\aA_F$ such that $A_{n_k}\in\uU_k$, $\uU_k\in S_F^*$, and $F_{n_k}\sub\uU_k$. It follows that $\lim_{k\to\infty}\uU_k=p_F$. Indeed, if $A\in F$, then $A\in \sum F_n$, so for almost all $k\io$ we have $A\cap A_{n_k}\in F_{n_k}$ and thus $A\in\uU_k$, and hence for almost all $k\io$ the point $\uU_k$ belongs to $\clopen{A}_F$, the clopen subset of $S_F$ induced by $A$. 
\end{proof}

In fact, in the proof of (2) we have shown more: if for each $k\io$ $X_k$ is the closed subset of $S_F^*$ consisting of all ultrafilters on $\aA_F$ extending $F_{n_k}$, then the sequence $\seqk{X_k}$ converges to $p_F$. 

The converse to Lemma \ref{lemma:sf_f_convseq} is also true.

\begin{lemma}\label{lemma:sf_convseq_f}
Let $F$ be a free filter on $\omega$ such that $S_F$ contains a non-trivial convergent sequence $\seqn{x_n}$. Then, there is a sequence $\seqn{F_n}$ of filters based on some sequence $\seqn{A_n}$ such that $F\sub \sum F_n$.
\end{lemma}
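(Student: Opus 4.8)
The plan is to recover the data $\seqn{A_n}$ and $\seqn{F_n}$ directly from the convergent sequence, by letting each $A_n$ be (the subset of $\omega$ underlying) a clopen set on which $x_n$ concentrates and letting $F_n$ be the trace of the ultrafilter $x_n$ on $A_n$. First I would note that, by Lemma \ref{lemma:sf_convseq_pf}, $\lim_n x_n=p_F$, so after discarding at most one term I may assume $x_n\neq p_F$ for all $n$. Since the basic clopen neighborhoods of $p_F$ in $S_F=St(\aA_F)$ are exactly the sets $\clopen{A}_F$ with $A\in F$, convergence $x_n\to p_F$ unwinds to the statement that \emph{for every $A\in F$ one has $A\in x_n$ for all but finitely many $n$}. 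This is the only consequence of convergence I will use, and, crucially, it is inherited by every subsequence; hence I am free to thin out $\seqn{x_n}$ whenever convenient, as the desired conclusion $F\sub LF(F_n)$ only asks for an inclusion.

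The key step is to extract a subsequence whose members admit pairwise disjoint clopen neighborhoods missing $p_F$. I would build indices $n_1<n_2<\cdots$ and pairwise disjoint clopen sets $U_k\ni x_{n_k}$ with $p_F\notin U_k$ by recursion: given $U_1,\ldots,U_k$, the union $V=U_1\cup\cdots\cup U_k$ is clopen and avoids $p_F$, so $S_F\sm V$ is a clopen neighborhood of $p_F$ and therefore contains $x_n$ for all but finitely many $n$; I pick $n_{k+1}>n_k$ with $x_{n_{k+1}}\notin V$, use zero-dimensionality of $S_F$ to separate $x_{n_{k+1}}$ from $p_F$ by a clopen set $W$, and put $U_{k+1}=W\sm V$. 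Writing each clopen set as $U_k=\clopen{B_k}_F$ for a unique $B_k\in\aA_F$, the containment $x_{n_k}\in U_k$ becomes $B_k\in x_{n_k}$, while $p_F\notin U_k$ gives $B_k\notin F$; most importantly, disjointness $\clopen{B_k}_F\cap\clopen{B_j}_F=\clopen{B_k\cap B_j}_F=\emptyset$ forces $B_k\cap B_j=\emptyset$ as subsets of $\omega$. Thus $\seqk{A_k}$, with $A_k:=B_k$, is a sequence of pairwise disjoint non-empty subsets of $\omega$ with $A_k\in x_{n_k}$.

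Finally I would define $F_k$ to be the filter on $A_k$ generated by the traces, i.e.\ $F_k=\{S\sub A_k\colon A_k\cap C\sub S\text{ for some }C\in x_{n_k}\}$. Since $A_k\in x_{n_k}$, every generator $A_k\cap C$ lies in the ultrafilter $x_{n_k}$ and is in particular non-empty, so $F_k$ is a proper filter on $A_k$; moreover $\bigcap F_k=A_k\cap\bigcap x_{n_k}$ has at most one point, so $F_k$ is a filter in the sense of the preliminaries. To conclude $F\sub LF(F_k)$, take $A\in F$; then $A\in\aA_F$ and $A$ is infinite, and by the reformulated convergence $A\in x_{n_k}$ for all but finitely many $k$. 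For each such $k$ the set $A\cap A_k=A_k\cap A$ is a generator of $F_k$, hence $A\cap A_k\in F_k$; therefore $\{k\colon A\cap A_k\in F_k\}$ is cofinite, i.e.\ belongs to $Fr$, so $A\in LF(F_k)$. After relabelling the subsequence by $\omega$ this is exactly the required statement. The main difficulty lies in the disjointness in the middle step: separating two points of a Stone space is trivial, but producing a single pairwise disjoint clopen family is what forces the recursion, together with the one genuinely geometric input, namely that a clopen set avoiding $p_F$ can capture only finitely many terms of the sequence.
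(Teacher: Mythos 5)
Your proof is correct and follows essentially the same strategy as the paper's: both arguments produce pairwise disjoint non-empty sets $A_n\in x_n$ with $A_n\notin F$, take for $F_n$ the trace of the ultrafilter $x_n$ on $A_n$, and then read off $F\sub LF\big(F_n\big)$ from the fact that every $\clopen{A}_F$ with $A\in F$ eventually captures the sequence. The only point of divergence is the disjointification step: the paper applies the Tietze theorem to get $f\in C\big(S_F\big)$ with $f\big(p_F\big)=0$ and $f\big(x_n\big)=1/n$, which separates \emph{all} the $x_n$ at once, whereas you run a recursion with clopen separation and pass to a subsequence --- which is perfectly legitimate here, since the lemma only asserts the existence of \emph{some} based sequence with $F\sub LF\big(F_n\big)$.
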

\begin{proof}
By Lemma \ref{lemma:sf_convseq_pf}, $\lim_{n\to\infty}x_n=p_F$, so we may assume that $x_n\neq p_F$ for every $n\io$. Using the Tietze Extension Theorem we may find $f\in C\big(S_F\big)$ such that $f\big(p_F\big)=0$ and $f\big(x_n\big)=1/(n+1)$ for every $n\io$, and hence we can easily construct a sequence $\seqn{A_n}$ of pairwise disjoint subsets of $\omega$ such that $A_n\in x_n$ and $A_n^c\in F$ for every $n\io$ ($A_n$'s may be finite or infinite). For every $n\io$ let $F_n=x_n\rstr A_n$. 
Then, $F\sub \sum F_n$. Indeed, let $A\in F$, then there is $N\io$ such that for every $n>N$ we have $A\in x_n$ and hence $A\cap A_n\in
F_n$. This yields that $A\in \sum F_n$.
\end{proof}

Putting Lemmas \ref{lemma:sf_f_convseq} and \ref{lemma:sf_convseq_f} together we obtain the following characterization of those filters $F$ for which the space $S_F$ contains a non-trivial convergent sequence (with the limit $p_F$, by Lemma \ref{lemma:sf_convseq_pf}).

\begin{theorem}
Let $F$ be a free filter on $\omega$. The space $S_F$ contains a non-trivial convergent sequence if and only if $F\sub \sum F_n$ for some sequence $\seqn{F_n}$ based on some $\seqn{A_n}$. \noproof
\end{theorem}

The following proposition provides a sufficient condition for a sequence $\seqn{F_n}$ so that the space $N_{\sum F_n}$ does not have any non-trivial convergent sequences (cf. condition (2) in Lemma \ref{lemma:sf_f_convseq}).

\begin{proposition}\label{prop:lf_no_frechet}
For every sequence $\seqn{F_n}$ of free filters based on a sequence $\seqn{A_n}$ of (necessarily infinite) subsets of $\omega$ and $F=\sum F_n$, there is no $X\in\cso$ such that $X\sub^*A$ for every $A\in F$. In particular, $S_F^*$ contains a non-trivial convergent sequence and $N_F$ does not.
\end{proposition}
\begin{proof}
Let $X\in\cso$. If $X\cap A_n\neq\emptyset$ for at most finitely many $n\io$, then $X\not\sub^*\bigcup_{n\ge N}A_n\in F$ for a sufficiently large number $N\io$. So let $M\in\cso$ be such that $X\cap A_n\neq\emptyset$ for all $n\in M$. For each $n\in M$ pick $x_n\in X\cap A_n$. Put $Y=\omega\sm\big\{x_n\colon n\in M\big\}$. Since for every $n\io$ we have $\omega\cap A_n=A_n\in F_n$ and the filter $F_n$ is free, we have also $Y\cap A_n\in F_n$ for every $n\io$, so $Y\in F$. But $x_n\in X\sm Y$ for every $n\in M$ and $M$ is infinite, so $X\not\sub^* Y$.

The second statement follows from Lemma \ref{lemma:sf_f_convseq}.(2) and Proposition \ref{prop:nf_convseq_char}.
\end{proof}

We will use Proposition \ref{prop:lf_no_frechet} to obtain a space $S_F$ with the JNP (induced by a convergent sequence in $S_F^*$) but such that $N_F$ does not have the BJNP---see Corollary \ref{cor:sf_jnp_nf_no_bjnp}.

We also have a counterpart for condition (1) of Lemma \ref{lemma:sf_f_convseq}.

\begin{proposition}\label{prop:lf_no_convseq_sf}
For every sequence $\seqn{F_n}$ of principal filters based on some sequence $\seqn{A_n}$ of (finite or infinite) subsets of $\omega$ and $F=\sum F_n$, there is $X\in\cso$ such that $F=Fr(X,\omega)$. 
In particular, $N_F$ contains a non-trivial convergent sequence, but there is no non-trivial convergent sequence in $S_F^*$.
\end{proposition}
\begin{proof}
For every $n\io$ let $z_n\io$ be the only point of $\bigcap F_n$---the set $X=\big\{z_n\colon\ n\io\big\}$ satisfies the condition that $X\sub^* A$ for every $A\in F$. Moreover, $X\in F$, since each $F_n$ is principal (so $\big\{z_n\big\}\in F_n$). Consequently, $F=Fr(X,\omega)$.

It follows by Proposition \ref{prop:nf_convseq_char} that $N_F$ contains a non-trivial sequence convergent to $p_F$. We show that there is no non-trivial convergent sequence in $S_F^*$. If $\omega\sub^*X$, then $F=Fr$ and $S_F^*=\big\{p_F\big\}$, so we are done. Assume then that $\omega\sm X\in\cso$ and, for the sake of contradiction, suppose there is a non-trivial sequence $\seqn{x_n\in S_F^*\sm\big\{p_F\big\}}$ such that $\lim_{n\to\infty}x_n=p_F$. 
Since $\seqn{x_n}$ converges to $p_F$ and $X\in F$, there is $N\io$ such that $X\in x_n$ for every $n>N$. But each $x_n$ is free, so $X\sm\{0,\ldots,k\}\in x_n$ for every $k\io$ and $n>N$. Since for every $A\in F$ we have $X\sub^* A$, for every $n>N$ and $A\in F$ we have $X\cap A\in x_n$, and hence $A\in x_n$, which implies that we cannot separate $p_F$ from any of the points $x_n$ with $n>N$---a contradiction, since $S_F$ is Hausdorff.
\end{proof}

\section{General characterizations of the BJNP and JNP of spaces $N_F$ and $S_F$\label{sec:nf_char_jnp_bjnp}}

In this section we present sufficient and necessary conditions for spaces $N_F$, $S_F$, and $S_F^*$ to have the BJNP or the JNP. 
We start the section by recalling some important facts which will be frequently used in what follows.

\begin{lemma}\label{lemma:basic_jnp}
Let $X$ be a space.
\begin{enumerate}
	\item If $X$ contains a non-trivial convergent sequence, then it has the JNP.
	\item If $X=\bo$ or $X=\os$, then $X$ does not have the BJNP nor the JNP.
	\item If $X$ has the JNP (resp. the BJNP), then $X$ has a JN-sequence (resp. a BJN-sequence) with disjoint supports.
	\item If $X$ is discrete, then $X$ does not have the BJNP.
\end{enumerate}
\end{lemma}
\begin{proof}
(1) is easy and well-known. For (2) see \cite{BKS1} or \cite{KSZgroth}. For (3) see \cite[Section 4]{MSZ} for the case of the JNP---the proof works without any changes for the BJNP, too. (4) easily follows from (3).
\end{proof}


Recall that a subset $A$ of a space $X$ is \textit{bounded}\footnote{Some authors use the name \textit{functionally bounded}.} in $X$ if for every $f\in C(X)$ we have $f\rstr A\in C^*(A)$.

\begin{lemma}\label{lemma:bounded_supports}
Let $X$ be a space and $\seqn{\mu_n}$ a sequence of finitely supported measures on $X$. Then, the following hold:
\begin{enumerate}
	\item if $\seqn{\mu_n}$ is a JN-sequence on $X$, then the union $\bigcup_{n\io}\supp\big(\mu_n\big)$ is bounded in $X$;
	\item if $\seqn{\mu_n}$ is a BJN-sequence on $X$ such that the union $\bigcup_{n\io}\supp\big(\mu_n\big)$ is contained in a subset $Y$ which is bounded in $X$, then it is a JN-sequence on $X$.
\end{enumerate}
\end{lemma}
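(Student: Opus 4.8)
The plan is to prove the two parts separately: part (2) is a short truncation argument, while part (1) carries the real content and I would establish it by a gliding-hump construction.

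For (2), I would fix an arbitrary $f\in C(X)$ and use that $Y$ is bounded in $X$ to find $M>0$ with $|f|\le M$ on $Y$, hence on $\bigcup_{n\io}\supp(\mu_n)$. Truncating, set $\tilde f=\max(-M,\min(M,f))$, which is a \emph{bounded} continuous function agreeing with $f$ on $Y$. Since each $\mu_n$ is supported inside $Y$, we get $\mu_n(f)=\mu_n(\tilde f)$, and the BJNP hypothesis gives $\mu_n(\tilde f)\to 0$. As $f\in C(X)$ was arbitrary and $\|\mu_n\|=1$ for every $n\io$, this shows that $\seqn{\mu_n}$ is a JN-sequence.

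For (1), I would argue by contradiction. If $S:=\bigcup_{n\io}\supp(\mu_n)$ is not bounded, pick $f\in C(X)$ unbounded on $S$; replacing $f$ by $|f|$ we may assume $f\ge 0$, so that $m_n:=\max\{f(x)\colon x\in\supp(\mu_n)\}$ satisfies $\sup_n m_n=\infty$. The goal is to build a \emph{single} $g\in C(X)$ with $|\mu_{n_k}(g)|\ge 3/4$ along a subsequence, contradicting $\mu_n(g)\to 0$. I would construct $g=\sum_k c_k h_k$ recursively, where each $h_k$ is a bump living in the ``$f$-slab'' $W_k:=\{x\in X\colon |f(x)-m_{n_k}|<1\}$. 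At step $k$, choose $n_k$ so that (a) $m_{n_k}$ exceeds all earlier levels by enough that the $W_k$ are pairwise disjoint and the family $\{W_k\}$ is locally finite, and (b) $|c_i|\cdot|\mu_{n_k}(h_i)|<2^{-k}/4$ for all $i<k$. Both are possible: each fixed $h_i\in C^*(X)$ satisfies $\mu_n(h_i)\to 0$, and since $\sup_n m_n=\infty$ there are arbitrarily large indices $n$ with $m_n$ arbitrarily large. Then pick $x_k\in\supp(\mu_{n_k})$ with $f(x_k)=m_{n_k}$ and, using complete regularity, take $h_k\colon X\to[0,1]$ with $h_k(x_k)=1$, $\supp(h_k)\sub W_k$, and $h_k\equiv 0$ on $\supp(\mu_{n_k})\sm\{x_k\}$; this forces $\mu_{n_k}(h_k)=\mu_{n_k}(\{x_k\})=:\eps_k\neq 0$, and I set $c_k=1/\eps_k$.

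The slab structure does two jobs at once. First, the $W_k$ are pairwise disjoint and locally finite precisely because $f$ is real-valued and the levels $m_{n_k}$ are spread out, so $g=\sum_k c_k h_k$ is a locally finite sum of continuous functions and hence lies in $C(X)$. Second, since $h_k$ is concentrated at $f$-level above $m_{n_i}$ for every $i<k$, it vanishes on $\supp(\mu_{n_i})$; thus in $\mu_{n_k}(g)=\sum_i c_i\mu_{n_k}(h_i)$ the ``future'' terms ($i>k$) vanish by disjointness, the ``past'' terms ($i<k$) sum to less than $1/4$ by (b), and the diagonal term equals $c_k\eps_k=1$, giving $|\mu_{n_k}(g)|\ge 3/4$ for all $k$. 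The main obstacle is exactly the mutual interference of the bumps, and stratifying the supports by $f$-levels is the key device that resolves it: it secures continuity and local finiteness of $g$ while making each new bump invisible to all earlier measures, leaving only one-directional interference that is tamed by boundedness of the $h_i$ together with weak* convergence against bounded functions. This is also where the full JNP rather than merely the BJNP is essential, since the coefficients $c_k=1/\eps_k$ may be unbounded and $g$ need not be a bounded function.
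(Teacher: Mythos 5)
Your proof of part (2) is exactly the paper's argument: the paper also fixes $f\in C(X)$, takes $M$ witnessing the boundedness of $Y$, composes $f$ with a retraction $r\colon\R\to[-M,M]$ (which is your truncation $\tilde f$), notes the result agrees with $f$ on $Y\supseteq\bigcup_{n\io}\supp\big(\mu_n\big)$, and applies the BJNP. For part (1), however, the paper offers no proof at all---it only cites \cite[Lemma 4.11]{KSZ} and \cite[Proposition 4.1]{KMSZ}---so your gliding-hump construction is a genuinely self-contained addition, and it is correct. The key points all check out: since $\sup_n m_n=\infty$ and each earlier bump $h_i$ is a fixed element of $C^*(X)$ against which $\mu_n(h_i)\to 0$, one can always choose the next index $n_k$ large enough for both requirements; spacing the levels $m_{n_k}$ by more than, say, $4$ makes the slabs $W_k$ pairwise disjoint and the family locally finite (indeed discrete), so $g=\sum_k c_kh_k$ is continuous; every point of $\supp\big(\mu_{n_k}\big)$ has $f$-value at most $m_{n_k}$, which is why all future bumps $h_i$, $i>k$, vanish there; and the past terms contribute at most $k\cdot 2^{-k}/4\le 1/8<1/4$, leaving $\big|\mu_{n_k}(g)\big|\ge 3/4$ against the normalized diagonal term $c_k\eps_k=1$. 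Two cosmetic remarks: what complete regularity actually gives is $h_k=0$ off $W_k$ (i.e.\ $\{h_k\neq0\}\sub W_k$) rather than a statement about the closed support, but that is all your argument uses; and your closing observation---that $g$ is genuinely unbounded because the weights $c_k=1/\eps_k$ need not be bounded, so the full JNP rather than the BJNP is what gets contradicted---correctly identifies why (1) fails for BJN-sequences (e.g.\ on a discrete $X$, cf.\ Lemma \ref{lemma:discrete_no_bjn} versus the BJNP of $N_{F_d}$).
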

\begin{proof}
For the proof of (1), see \cite[page 3024]{BKS1} or \cite[Proposition 4.1]{KMSZ}. 

We prove (2). Let $f\in C(X)$. There is $M>0$ such that $|f(x)|<M$ for every $x\in Y$. Let $r\colon\R\to[-M,M]$ be a retraction and put $g=r\circ f$. Since $g(x)=f(x)$ for every $x\in Y$ and $g\in C^*(X)$, we have  $\lim_{n\to\infty}\mu_n(f)=\lim_{n\to\infty}\mu_n(g)=0$, which proves that $\seqn{\mu_n}$ is a JN-sequence on $X$.
\end{proof}

Of course, the union of supports of measures from a (B)JN-sequence on a given space must necessarily be infinite. 

\subsection{Characterization in terms of probability measures on spaces $N_F$\label{sec:nf_bjnp_prob}}

We first characterize which sequences of measures on spaces $N_F$ are BJN-sequences or JN-sequences.

\begin{proposition}\label{prop:nf_bjnseq_char}
Let $F$ be a free filter on $\omega$ and $\seqn{\mu_n}$ a sequence of finitely supported measures on $N_F$. For each $n\io$ let $P_n=\big\{x\in\supp\big(\mu_n\big)\colon\ \mu_n(\{x\})>0\big\}$ and $N_n=\supp\big(\mu_n\big)\sm P_n$. Then, $\seqn{\mu_n}$ is a BJN-sequence on $N_F$ if and only if the following three conditions simultaneously hold:
\begin{enumerate}
	\item $\big\|\mu_n\big\|=1$ for every $n\io$,
	\item $\lim_{n\to\infty}\big\|\mu_n\rstr P_n\big\|=\lim_{n\to\infty}\big\|\mu_n\rstr N_n\big\|=1/2$,
	\item $\lim_{n\to\infty}\big\|\mu_n\rstr(\omega\sm A)\big\|=0$ for every $A\in F$.
\end{enumerate}
\end{proposition}
\begin{proof}
Assume that $\seqn{\mu_n}$ is a BJN-sequence on $N_F$. Then, (1) holds by the definition and (2) was essentially observed in \cite[Lemma 3.1]{MSZ} (basing on the fact that $\mu_n\big(N_F\big)\to 0$). 
If $A\in F$ 
and there is a subsequence $\seqk{\mu_{n_k}}$ such that $\big\|\mu_{n_k}\rstr(\omega\sm A)\big\|>0$ for every $k\io$ and $\lim_{k\to\infty}\big\|\mu_{n_k}\rstr(\omega\sm A)\big\|>0$, then the sequence
\[\seqk{\big(\mu_{n_k}\rstr(\omega\sm A)\big)/\big\|\mu_{n_k}\rstr(\omega\sm A)\big\|}\]
is a BJN-sequence on the discrete clopen subspace $\omega\sm A$ of $N_F$, contradicting Lemma \ref{lemma:basic_jnp}.(4). This proves (3).

Assume now that $\seqn{\mu_n}$ is a sequence of finitely supported measures on $N_F$ satisfying conditions (1)--(3). We will first show that $\mu_n(f)\to 0$ for every $f\in C^*\big(N_F\big)$ such that $f\big(p_F\big)=0$. Let thus $f\in C^*\big(N_F\big)$ be such a (non-zero) function and fix $\eps>0$. By the continuity of $f$, there is $A\in F$ such that $|f(n)|<\eps/2$ for every $n\in A$. By (3), there is $N\io$ such that for every $n>N$ we have $\big\|\mu_n\rstr(\omega\sm A)\big\|<\eps/(2\|f\|_\infty)$. It holds now:
\[\big|\mu_n(f)\big|\le\big|\big(\mu_n\rstr(\omega\sm A)\big)(f)\big|+\big|\big(\mu_n\rstr\ol{A}^{N_F}\big)(f)\big|\le\]
\[\|f\|_\infty\cdot\big\|\mu_n\rstr(\omega\sm A)\big\|+\big\|f\rstr\ol{A}^{N_F}\big\|_\infty\cdot\big\|\mu_n\rstr\ol{A}^{N_F}\big\|<\eps/2+\eps/2\cdot1=\eps\]
for every $n>N$, so $\lim_{n\to\infty}\mu_n(f)=0$.%

Let now $f\in C^*\big(N_F\big)$ be arbitrary. Notice that for every $n\io$ we have:
\[\mu_n\big(f-f\big(p_F\big)\cdot\chi_{N_F}\big)=\mu_n(f)-f\big(p_F\big)\cdot\mu_n\big(\chi_{N_F}\big)=\mu_n(f)-f\big(p_F\big)\Big(\mu_n\big(P_n\big)+\mu_n\big(N_n\big)\Big),\]
so, by (2) and the following equality (proved above) \[\lim_{n\to\infty}\mu_n\big(f-f\big(p_F\big)\cdot\chi_{N_F}\big)=0,\]
we get that $\lim_{n\to\infty}\mu_n(f)=0$. Consequently, $\seqn{\mu_n}$ is a BJN-sequence on $N_F$.
\end{proof}

\noindent Note that condition (3) in the above theorem may be equivalently stated as follows:

(3') $\lim_{n\to\infty}\big\|\mu_n\rstr\ol{A}^{N_F}\big\|=1$ for every $A\in F$.

\noindent Recall that for each $A\in F$ we have $\ol{A}^{N_F}=A\cup\big\{p_F\big\}$ and so $N_F=(\omega\sm A)\cup\ol{A}^{N_F}$.

To ensure that a given sequence of measures on a space $N_F$ is a JN-sequence, we need to add one supplementary condition to the conclusion of Proposition \ref{prop:nf_bjnseq_char}.

\begin{proposition}\label{prop:nf_jnseq_char}
Let $F$ be a free filter on $\omega$ and $\seqn{\mu_n}$ a sequence of finitely supported measures on $N_F$. Then, $\seqn{\mu_n}$ is a JN-sequence on $N_F$ if and only if it satisfies conditions (1)--(3) of Proposition \ref{prop:nf_bjnseq_char} and, in addition, the following one:
\begin{enumerate}
\setcounter{enumi}{3}
	\item $\bigcup_{n\io}\supp\big(\mu_n\big)\sub^* A$ for every $A\in F$.
\end{enumerate}
\end{proposition}
\begin{proof}
Assume that $\seqn{\mu_n}$ satisfies conditions (1)--(3) of Proposition \ref{prop:nf_bjnseq_char} and additionally condition (4). By Proposition \ref{prop:nf_bjnseq_char}, $\seqn{\mu_n}$ is a BJN-sequence on $N_F$. Put 
\[Y=\bigcup_{n\io}\supp\big(\mu_n\big)\sm\big\{p_F\big\}.\]
It follows, by (4), that $Y\sub^* A$ for every $A\in F$, which means, by Lemmas \ref{lemma:nf_convseq_frechet} and \ref{lemma:sf_convseq_pf}, that $\seq{n}{n\in Y}$ is a sequence convergent to $p_F$. Put $X=Y\cup\big\{p_F\big\}$. 
Since $X$ with the inherited topology is compact, it is bounded in $N_F$ and hence, by Lemma \ref{lemma:bounded_supports}.(2), $\seqn{\mu_n}$ is a JN-sequence on $N_F$.

Conversely, assume that $\seqn{\mu_n}$ is a JN-sequence on $N_F$. Since it is a BJN-sequence on $N_F$, by Proposition \ref{prop:nf_bjnseq_char} we need only to prove condition (4). So, for the sake of contradiction, suppose there is $A\in F$ such that the set
\[X=\bigcup_{n\io}\supp\big(\mu_n\big)\cap(\omega\sm A)\]
is infinite. We will construct an unbounded function $f\in C\big(N_F\big)$ such that $f\rstr A\equiv 0$ and $\limsup_{n\to\infty}\big|\mu_n\big(f\rstr(\omega\sm A)\big)\big|>0$. Since $X$ is infinite and each $\supp\big(\mu_n\big)$ is finite, we can find sequences $\seqk{x_k\in X}$ and $\seqk{n_k\io}$ such that
\[x_k\in\supp\big(\mu_{n_k}\big)\sm\bigcup_{i=0}^{k-1}\supp\big(\mu_{n_i}\big)\]
for every $k\io$. We first define $f$ inductively on $\big\{x_k\colon k\io\big\}$. For $k=0$ let $f\big(x_0\big)=0$ 
and for $k>0$ define $f\big(x_k\big)$ as follows:
\[f\big(x_k\big)=\Big(1-\sum_{i=0}^{k-1}f\big(x_i\big)\cdot\mu_{n_k}\big(\big\{x_i\big\}\big)\Big)\cdot\mu_{n_k}\big(\big\{x_k\big\}\big)^{-1}.\]
For $x\in X\sm\big\{x_k\colon\ k\io\big\}$ or $x\in\ol{A}^{N_F}$, let again $f(x)=0$, so $f\rstr\ol{A}^{N_F}\equiv 0$. 
Since $A\in F$ and thus the subspace $N_F\sm\ol{A}^{N_F}=\omega\sm A$ of $N_F$ is clopen and discrete, $f$ is continuous and so $f\in C\big(N_F\big)$. Then, as for every $k>0$ we have:
\[\supp\big(\mu_{n_k}\big)\cap\big\{x_i\colon\ i\io\big\}\sub\big\{x_0,\ldots,x_k\big\},\]
it also holds:
\[\mu_{n_k}(f)=\sum_{i=0}^{k-1}f\big(x_i\big)\cdot\mu_{n_k}\big(\big\{x_i\big\}\big)+f\big(x_k\big)\cdot\mu_{n_k}\big(\big\{x_k\big\}\big)=\]
\[\sum_{i=0}^{k-1}f\big(x_i\big)\cdot\mu_{n_k}\big(\big\{x_i\big\}\big)+\Big(1-\sum_{i=0}^{k-1}f\big(x_i\big)\cdot\mu_{n_k}\big(\big\{x_i\big\}\big)\Big)\cdot\mu_{n_k}\big(\big\{x_k\big\}\big)^{-1}\cdot\mu_{n_k}\big\{\big\{x_k\big\}\big)=1,\]
which implies that $\limsup_{n\to\infty}\mu_n(f)\ge1$, contradicting the fact that $\seqn{\mu_n}$ is a JN-sequence on $N_F$. 

It follows that $\bigcup_{n\io}\supp\big(\mu_n\big)\sub^*A$ for every $A\in F$ and hence (4) holds.
\end{proof}

\begin{theorem}
\label{theorem:nf_jnp}
For every free filter $F$ on $\omega$, the space $N_F$ has the JNP if and only if $N_F$ contains a non-trivial convergent sequence.
\end{theorem}
\begin{proof}
If $\seqn{\mu_n}$ is a JN-sequence on $N_F$, then, by Proposition \ref{prop:nf_jnseq_char}.(4),
\[\bigcup_{n\io}\supp\big(\mu_n\big)\sub^*A\]
for every $A\in F$, which, by Proposition \ref{prop:nf_convseq_char}, means that $N_F$ contains a non-trivial convergent sequence. The converse is well-known.
\end{proof}

Recall that in Proposition \ref{prop:nf_convseq_char} we have provided several equivalent conditions for a space $N_F$ to contain a non-trivial convergent sequence.

From Proposition \ref{prop:nf_bjnseq_char} we can the derive the proof of Theorem \ref{theorem:main_theorem_BJNP_prob} from Introduction, which can be thought of as a BJNP-analogon of Theorem \ref{theorem:nf_jnp}, since any non-trivial convergent sequence $\seqn{x_n\io}$ in a given space $N_F$ induces the sequence $\seqn{\delta_{x_n}}$ of trivial finitely supported probability measures satisfying conditions (1) and (2) of Theorem \ref{theorem:main_theorem_BJNP_prob}.

\begin{proof}[\textbf{Proof of Theorem \ref{theorem:main_theorem_BJNP_prob}}]
Assume that $N_F$ has the BJNP. By Lemma \ref{lemma:basic_jnp}.(3), there is a disjointly supported BJN-sequence $\seqn{\theta_n}$ on $N_F$, so we may assume that $p_F\not\in\supp\big(\theta_n\big)$ for every $n\io$. For each $n\io$ put $\mu_n=\big|\theta_n\big|$, so that $\big\|\mu_n\big\|=1$, $\supp\big(\mu_n\big)\sub\omega$, and $\mu_n\ge0$, and hence $\mu_n$ is a probability measure on $N_F$. By Proposition \ref{prop:nf_bjnseq_char}.(3) (cf. also condition (3')) we have:
\[\lim_{n\to\infty}\mu_n(A)=\lim_{n\to\infty}\big\|\theta_n\rstr A\big\|=\lim_{n\to\infty}\big\|\theta_n\rstr\ol{A}^{N_F}\big\|=1\]
for every $A\in F$. Thus, conditions (1) and (2) are satisfied.

Assume now that there is a sequence $\seqn{\mu_n}$ of finitely supported probability measures on $N_F$ satisfying conditions (1) and (2). For each $n\io$ put:
\[\nu_n=\frac{1}{2}\big(\delta_{p_F}-\mu_n\big).\]
By Proposition \ref{prop:nf_bjnseq_char} it is immediate that $\seqn{\nu_n}$ is a BJN-sequence on $N_F$.
\end{proof}


The above proof also implies that condition (2) can be rephrased in the following alternative way:

(2') $\mu_n(f)\to\delta_{p_F}(f)$ for every $f\in C_p^*(X)$.

\noindent (Recall that $\delta_{p_F}(f)=f\big(p_F\big)$.)
%

\medskip

Theorem \ref{theorem:main_theorem_BJNP_prob} may be used to obtain a characterization of filters $F$ such that $N_F$ has the BJNP in terms of single probability measures on $\omega$ and closed subsets of $\ostar$. Note that a closed subset $\fF$ of $\ostar$, having similar properties to those described in Propositions \ref{prop:one_measure_bjnp} and \ref{prop:bjnp_one_measure} and Remark \ref{rem:one_measure_bjnp_constr}, was used in \cite[Section 5]{KSZgroth} to construct a compact space $K$ such that its Banach function space $C(K)$ has the $\ell_1$-Grothendieck property but does not have the Grothendieck property.

\begin{proposition}\label{prop:one_measure_bjnp}
Let $\mu$ be a probability measure on $\bo$ with $\mu(\omega)=1$ and let $\seqn{A_n}$ be a sequence of pairwise disjoint subsets of $\omega$ such that $\mu\big(A_n\big)>0$ for every $n\io$. Assume that $\fF$ is a closed subset of $\ostar$ having the following property 
\textup{($\dagger$)}:
\begin{equation}
\tag{$\dagger$}\begin{aligned}\text{for every clopen subset $U$ of $\bo$ containing $\fF$ it holds }
\lim_{n\to\infty}\frac{\mu\big(A_n\cap U\big)}{\mu\big(A_n\big)}=1.\end{aligned}
\end{equation}
Let $F$ be the free filter on $\omega$ defined by $F=\bigcap\fF$. Then, $N_F$ has the BJNP.
\end{proposition}
\begin{proof}
For each $n\io$ let $B_n$ be a finite subset of $A_n$ such that
\[\mu\big(A_n\sm B_n\big)/\mu\big(A_n\big)<1/2^{n+1};\]
note that $\mu\big(B_n\big)>0$. Then, for each $n\io$ and $A\in\wo$, set:
\[\mu_n(A)=\mu\big(B_n\cap A\big)/\mu\big(B_n\big).\]
It follows that each $\mu_n$ is a finitely supported probability measure with $\supp\big(\mu_n\big)\sub B_n\sub\omega$. 

Fix $A\in F$. Then, for the corresponding clopen subset of $\bo$, it holds $\fF\sub\clopen{A}_\omega$. We trivially have:
\[\tag{$*$}1\ge\mu_n(A)\ge\mu\big(B_n\cap A\big)/\mu\big(A_n\big),\]
and
\[\mu\big(A_n\cap A\big)-\mu\big(B_n\cap A\big)=\mu\big(\big(A_n\sm B_n\big)\cap A\big)\le\mu\big(A_n\sm B_n\big)<\mu\big(A_n\big)/2^{n+1},\]
so, by ($\dagger$),
\[\lim_{n\to\infty}\mu\big(B_n\cap A\big)/\mu\big(A_n\big)=1,\]
and hence, by ($*$), $\lim_{n\to\infty}\mu_n(A)=1$, too. By Theorem \ref{theorem:main_theorem_BJNP_prob} the proof is finished.
\end{proof}

\begin{remark}\label{rem:one_measure_bjnp_constr}
Let $\mu$ and $\seqn{A_n}$ be as in Proposition \ref{prop:one_measure_bjnp}. Set:
\[\mathfrak{F}=\big\{\fF\colon\ \fF\sub\ostar\text{ is a closed subset having property }(\dagger)\big\}.\]
If $\fF,\fF'$ are closed subsets of $\ostar$ such that $\fF\sub\fF'$ and $\fF\in\mathfrak{F}$, then $\fF'\in\mathfrak{F}$. Also, for every non-empty $\mathfrak{G}\sub\mathfrak{F}$ its intersection $\bigcap\mathfrak{G}$ has property ($\dagger$), i.e.  $\bigcap\mathfrak{G}\in\mathfrak{F}$. (To see this, first prove it for finite $\mathfrak{G}$'s by induction on $|\mathfrak{G}|$, then assume that $\bigcap\mathfrak{G}\not\in\mathfrak{F}$ and use the compactness argument). So, $\bigcap\mathfrak{F}\in\mathfrak{F}$. Consequently, $\mathfrak{F}$ is a principal filter in the family of all closed subsets of $\ostar$ (ordered by inclusion).

Let $\gG$ be the closed subset of $\ostar$ defined by the following equivalence:
\[x\in\gG\quad\Longleftrightarrow\quad\forall\text{ clopen neighborhood }U\text{ of }x\text{ in }\bo\colon\ \limsup_{n\to\infty}\frac{\mu\big(A_n\cap U\big)}{\mu\big(A_n\big)}>0.\]
Then, the following equality holds:
\[\gG=\bigcap\mathfrak{F}.\]
In particular, $\gG$ has property ($\dagger$), too.
\end{remark}

The converse to Proposition \ref{prop:one_measure_bjnp} also holds.

\begin{proposition}\label{prop:bjnp_one_measure}
Let $F$ be a free filter on $\omega$ such that the space $N_F$ has the BJNP. Then, there exist a probability measure $\mu$ on $\bo$ with $\mu(\omega)=1$ and a sequence $\seqn{A_n}$ of pairwise disjoint subsets of $\omega$ such that $\mu\big(A_n\big)>0$ for every $n\io$ and
\[\lim_{n\to\infty}\frac{\mu\big(A_n\cap U\big)}{\mu\big(A_n\big)}=1\]
for every clopen subset $U$ of $\bo$ containing the closed subset $\fF$ of $\ostar$ which consists of all ultrafilters extending $F$.
\end{proposition}
\begin{proof}
Let $\seqn{\mu_n}$ be a sequence of disjointly supported probability measures on $N_F$ from Theorem \ref{theorem:main_theorem_BJNP_prob}. Set:
\[\mu=\sum_{n\io}\mu_n/2^{n+1}.\]
(Here, we treat every $\mu_n$ as a measure on $\bo$.) Then, $\mu$ is a probability measure on $\bo$ such that $\mu(\omega)=1$. For each $n\io$, let $A_n=\supp\big(\mu_n\big)$, so $\mu\big(A_n\big)=1/2^{n+1}>0$.

Let $U$ be a clopen subset of $\bo$ containing $\fF$. There is $A\in\wo$ such that $U=\clopen{A}_\omega$. Consequently, $A\in F$, and so
\[\lim_{n\to\infty}\frac{\mu\big(A_n\cap U\big)}{\mu\big(A_n\big)}=\lim_{n\to\infty}\mu_n(A)=1,\]
which finishes the proof.
\end{proof}


\subsection{Characterization in terms of probability measures on spaces $S_F$ and $S_F^*$}

In this short subsection we present analogons of results from the previous subsection for spaces $S_F$ and $S_F^*$. Recall that, for every set $A$ belonging to a free filter $F$ on $\omega$, the clopen subset $\clopen{\omega\sm A}_F$ (resp. $\clopen{\omega\sm A}_F^*$) of $S_F$ (resp. of $S_F^*$) is either finite (resp. empty) or homeomorphic to $\bo$ (resp. to $\os$), hence it does not have the JNP (by Lemma \ref{lemma:basic_jnp}.(2)). The proof of the next theorem is thus basically identical to the one of Proposition \ref{prop:nf_bjnseq_char}, so we omit it.


\begin{theorem}\label{prop:sf_jnseq_char}
Let $F$ be a free filter on $\omega$ and let $X=S_F$ or $X=S_F^*$. Let $\seqn{\mu_n}$ be a sequence of finitely supported measures on $X$. For each $n\io$ let $P_n=\big\{x\in\supp\big(\mu_n\big)\colon\ \mu_n(\{x\})>0\big\}$ and $N_n=\supp\big(\mu_n\big)\sm P_n$. Then, $\seqn{\mu_n}$ is a JN-sequence on $X$ if and only if the following three conditions simultaneously hold:
\begin{enumerate}
	\item $\big\|\mu_n\big\|=1$ for every $n\io$,
	\item $\lim_{n\to\infty}\big\|\mu_n\rstr P_n\big\|=\lim_{n\to\infty}\big\|\mu_n\rstr N_n\big\|=1/2$,
	\item if $X=S_F$, then $\lim_{n\to\infty}\big\|\mu_n\rstr[\omega\sm A]_F\big\|=0$ for every $A\in F$, or,\\ if $X=S_F^*$, then $\lim_{n\to\infty}\big\|\mu_n\rstr[\omega\sm A]_F^*\big\|=0$ for every $A\in F$.\noproof
\end{enumerate}
\end{theorem}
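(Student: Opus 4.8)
The plan is to mirror the proof of Theorem \ref{prop:nf_bjnseq_char}, replacing the two ingredients that were specific to $N_F$ by their counterparts for the compact spaces $S_F$ and $S_F^*$. Throughout I write $X$ for either $S_F$ or $S_F^*$; since $X$ is compact, every $f\in C(X)$ is bounded, so $\|f\|_\infty<\infty$ and the JNP and BJNP coincide here. The two facts I will lean on are: first, the constant function $\chi_X$ lies in $C(X)$, so any JN-sequence satisfies $\mu_n(\chi_X)\to 0$; and second, for every $A\in F$ the clopen set $C_A=[\omega\sm A]_F$ (resp. $C_A=[\omega\sm A]_F^*$) is, by Lemma \ref{lemma:sf_frechet_bo} and the remarks following it, either finite or homeomorphic to $\bo$ (resp. to $\os$), hence carries no JN-sequence by Fact \ref{fact:bo_bjnp} (the finite case being trivial, since the support union of a JN-sequence must be infinite). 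I will also use that the sets $[A]_F$ (resp. $[A]_F^*$) for $A\in F$ form a clopen neighborhood base of $p_F$ in $X$, as $X$ is a zero-dimensional compact space in which the clopen neighborhoods of $p_F$ are precisely those induced by members of $F$, and that the complement of $[A]_F$ in $X$ is exactly $C_A$.

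For the forward implication, assume $\seqn{\mu_n}$ is a JN-sequence on $X$. Condition (1) is part of the definition. For (2), I evaluate against $\chi_X$: since $\mu_n(\chi_X)=\big\|\mu_n\rstr P_n\big\|-\big\|\mu_n\rstr N_n\big\|\to 0$ while $\big\|\mu_n\rstr P_n\big\|+\big\|\mu_n\rstr N_n\big\|=\big\|\mu_n\big\|=1$, solving these two relations forces both limits to equal $1/2$. For (3), I fix $A\in F$ and argue by contradiction: if $\big\|\mu_n\rstr C_A\big\|\not\to 0$, then, passing to a subsequence, I may assume $\big\|\mu_{n_k}\rstr C_A\big\|\to c>0$; extending each $f\in C(C_A)$ by $0$ across the clopen set $C_A$ gives $\big(\mu_{n_k}\rstr C_A\big)(f)=\mu_{n_k}(\tilde f)\to 0$, so the normalized measures $\big(\mu_{n_k}\rstr C_A\big)/\big\|\mu_{n_k}\rstr C_A\big\|$ form a JN-sequence on $C_A$, contradicting that $C_A$ has no JNP.

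For the converse, assume (1)--(3) and first treat $f\in C(X)$ with $f\big(p_F\big)=0$. Given $\eps>0$, continuity of $f$ at $p_F$ together with the clopen base above furnishes $A\in F$ with $|f|<\eps/2$ on $[A]_F$ (resp. $[A]_F^*$), whose complement in $X$ is exactly $C_A$; condition (3) then yields $N$ with $\big\|\mu_n\rstr C_A\big\|<\eps/(2\|f\|_\infty)$ for $n>N$, and splitting $\mu_n(f)$ over $C_A$ and its complement gives the estimate $\big|\mu_n(f)\big|\le\|f\|_\infty\cdot\big\|\mu_n\rstr C_A\big\|+(\eps/2)\cdot\big\|\mu_n\big\|<\eps$, so $\mu_n(f)\to 0$. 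For general $f\in C(X)$ I apply this to $f-f\big(p_F\big)\chi_X$ and use that $\mu_n(\chi_X)=\big\|\mu_n\rstr P_n\big\|-\big\|\mu_n\rstr N_n\big\|\to 0$ by (2), whence $\mu_n(f)\to 0$ as well.

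The routine parts---the arithmetic in (2) and the two-term estimate in the converse---are identical to the $N_F$ case, so I expect no difficulty there. The only genuinely new point, and the step I would verify most carefully, is the replacement used in (3): one must confirm that the clopen complement $C_A$ really is finite or a copy of $\bo$/$\os$ (so that Fact \ref{fact:bo_bjnp} applies), and that restriction-then-normalization produces a bona fide JN-sequence on $C_A$. This is exactly where clopenness (to extend test functions by zero and land back in $C(X)$) and the strict positivity of the limiting mass $c$ on the chosen subsequence are both essential.
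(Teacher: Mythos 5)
Your proposal is correct and follows exactly the route the paper intends: it explicitly omits this proof as ``basically identical'' to that of Theorem \ref{prop:nf_bjnseq_char}, with the single substitution you identify, namely that for $A\in F$ the clopen complement $\clopen{\omega\sm A}_F$ (resp.\ $\clopen{\omega\sm A}_F^*$) is finite or homeomorphic to $\bo$ (resp.\ $\os$) and hence carries no JN-sequence by Fact \ref{fact:bo_bjnp}, in place of the discreteness argument of Lemma \ref{lemma:discrete_no_bjn}. All the details you flag (extension by zero across a clopen set, positivity of the limiting mass on the subsequence, the $\chi_X$ computation for condition (2)) are handled correctly.
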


The following proposition is a counterpart of Theorem \ref{theorem:nf_jnp} for spaces $S_F$ and $S_F^*$. Note that in Theorem \ref{theorem:nf_jnp} we did not put any restrictions on the sizes of supports.

\begin{proposition}\label{prop:sf_convseq_supps2}
Let $F$ be a free filter on $\omega$ and let $X=S_F$ or $X=S_F^*$. Then, the space $X$ contains a non-trivial sequence convergent to $p_F$ if and only if there is a JN-sequence $\seqn{\mu_n}$ on $X$ such that $\big|\supp\big(\mu_n\big)\big|=2$ for every $n\io$.
\end{proposition}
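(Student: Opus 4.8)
The plan is to prove the two implications separately. The implication from left to right is an immediate consequence of Fact~\ref{fact:conv_seq_jnp}: supposing $\seqn{x_n}$ is a non-trivial sequence in $X$ with $\lim_{n\to\infty}x_n=p_F$, I would set $\mu_n=\tfrac12\big(\delta_{x_{2n}}-\delta_{x_{2n+1}}\big)$ for every $n\io$. Since the sequence is non-trivial we have $x_{2n}\neq x_{2n+1}$, whence $\big|\supp(\mu_n)\big|=2$, and the construction in the proof of Fact~\ref{fact:conv_seq_jnp} shows that $\seqn{\mu_n}$ is a JN-sequence on $X$. This settles the direction with no further work; the substantive content lies in the converse, which will rest on the characterization of JN-sequences provided by Theorem~\ref{prop:sf_jnseq_char}.

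For the converse, let $\seqn{\mu_n}$ be a JN-sequence on $X$ with $\big|\supp(\mu_n)\big|=2$ for all $n\io$, and write $\mu_n=\alpha_n\delta_{y_n}+\beta_n\delta_{z_n}$ with $y_n\neq z_n$ and $\alpha_n,\beta_n\neq0$. First I would feed $\seqn{\mu_n}$ into Theorem~\ref{prop:sf_jnseq_char}. Conditions (1) and (2) give $|\alpha_n|+|\beta_n|=1$ and $\|\mu_n\rstr P_n\|,\|\mu_n\rstr N_n\|\to1/2$; since each support has only two points, this forces, for all sufficiently large $n$, that exactly one of $\alpha_n,\beta_n$ is positive and the other negative and that $|\alpha_n|,|\beta_n|>1/4$, so that eventually the two weights are bounded away from $0$. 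Next I would use condition (3), which for $X=S_F$ reads $\|\mu_n\rstr\clopen{\omega\sm A}_F\|\to0$ for every $A\in F$ (with $\clopen{\omega\sm A}_F^*$ in place of $\clopen{\omega\sm A}_F$ for $X=S_F^*$). Because $\clopen{\omega\sm A}_F=\ol{A^c}^{S_F}$, the quantity $\|\mu_n\rstr\ol{A^c}^{S_F}\|$ equals $|\alpha_n|$ when $y_n\in\ol{A^c}^{S_F}$ plus $|\beta_n|$ when $z_n\in\ol{A^c}^{S_F}$; as both weights eventually exceed $1/4$, this can tend to $0$ only if eventually neither $y_n$ nor $z_n$ lies in $\ol{A^c}^{S_F}$. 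Hence for every $A\in F$ there is $N\io$ with $y_n,z_n\in\clopen{A}_F$ for all $n>N$ (and analogously in $S_F^*$).

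Finally, recalling that $\big\{\clopen{A}_F\colon A\in F\big\}$ (respectively $\big\{\clopen{A}_F^*\colon A\in F\big\}$) is a neighborhood base at $p_F$ in $S_F$ (respectively $S_F^*$), the previous step says exactly that both $\seqn{y_n}$ and $\seqn{z_n}$ converge to $p_F$. Consequently the set $S=\bigcup_{n\io}\supp(\mu_n)=\big\{y_n,z_n\colon n\io\big\}$ has the property that every neighborhood of $p_F$ contains all but finitely many of its elements. Since the union of supports of a JN-sequence is necessarily infinite, $S$ is infinite, so $S\sm\{p_F\}$ is an infinite set which, enumerated injectively, yields the desired non-trivial sequence converging to $p_F$.

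The step I expect to be the main obstacle is the careful bookkeeping joining conditions (2) and (3) of Theorem~\ref{prop:sf_jnseq_char}: one must use the lower bound on the weights coming from (2) to exclude the degenerate scenario in which one weight collapses to $0$ and (3) holds vacuously without driving the corresponding support points toward $p_F$. A secondary point worth care is that $p_F$ itself may occur among the $y_n,z_n$, which is precisely why one passes to $S\sm\{p_F\}$ before extracting the convergent sequence.
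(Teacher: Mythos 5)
Your proof is correct, and both implications follow the same overall strategy as the paper's: the forward direction via the construction in Fact \ref{fact:conv_seq_jnp}, and the converse by showing that for every $A\in F$ both support points eventually lie in $\clopen{A}_F$ (resp.\ $\clopen{A}_F^*$), so that the supports accumulate only at $p_F$. The difference is in how that containment is obtained and how the convergent sequence is then extracted. The paper first passes to a disjointly supported subsequence via \cite[Lemma 4.15]{KSZ} and then argues by contradiction in two cases: if infinitely many supports miss $\clopen{A}_F$ entirely, one gets a JN-sequence on $\clopen{A^c}_F\cong\bo$; if infinitely many supports meet $\clopen{A}_F$ in exactly one point, \cite[Lemma 4.2]{KSZ} forces $\limsup_n\big|\mu_n\big(\clopen{A}_F\big)\big|\ge 1/2$. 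You instead feed the sequence into Theorem \ref{prop:sf_jnseq_char} and use the quantitative consequence of condition (2) — that eventually one weight is positive, one negative, and both exceed $1/4$ in absolute value — so that condition (3) directly excludes either point from $\clopen{\omega\sm A}_F$; this handles both of the paper's cases uniformly and keeps the argument internal to the paper's own machinery. Since you do not reduce to disjoint supports, you cannot simply pick one point per measure to get a non-trivial sequence; your substitute — that the union of supports is infinite and every neighborhood of $p_F$ contains all but finitely many of its points, so any injective enumeration of $S\sm\{p_F\}$ converges to $p_F$ — is a correct and slightly more careful finish than the paper's. Your closing caveats (ruling out a weight collapsing to $0$, and discarding $p_F$ from the union of supports) are exactly the right points to watch, and your argument handles both.
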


In the proof we will use the following elementary observation.

\begin{lemma}\label{lemma: general_seq_of_sets}
Let $\seqn{A_n}$ be a sequence of non-empty subsets of a space $X$ of sizes bounded by some constant $M\in\omega$.
Then, for any point $x\in X$, at least one of the following conditions holds:
\begin{enumerate}
    \item there exist an increasing sequence $\seqk{n_k\io}$ and points $x_k\in A_{n_k}$, $k\in \omega$, such that $\lim_{k\to\infty}x_k = x$;
    \item there exist $S\in\cso$ and an open neighborhood $U$ of $x$ such that $U\cap A_n = \emptyset$ for all $n\in S$.
\end{enumerate}
\end{lemma}

\begin{proof}
Assuming the negation of condition (1), one can easily construct sets $S$ and $U$ required in (2) by induction on M.
\end{proof} 

\begin{proof}[Proof of Proposition \ref{prop:sf_convseq_supps2}]
We will prove the proposition for $X=S_F$ only as the proof for $X=S_F^*$ is similar.

The implication in the right direction follows from Lemma \ref{lemma:basic_jnp}.(1), so assume that $S_F$ admits a JN-sequence $\seqn{\mu_n}$ such that all supports $A_n = \supp\big(\mu_n\big)$ have size $2$. By (the proof of) \cite[Lemma 6.6]{MSZ}, we may assume that the sets $A_n$ are pairwise disjoint. 

Suppose that, for some $A\in F$ and $S\in\cso$, we have $A_n\cap\clopen{A}_F = \emptyset$ for every $n\in S$. Then, the clopen $\clopen{A^c}_F$ admits a JN-sequence $\seq{\mu_n}{n\in S}$,  despite that it is homeomorphic to $\bo$, which is a contradiction. Hence, Lemma \ref{lemma: general_seq_of_sets}, applied for the sequence $\seqn{A_n}$ and the point $x = p_F$, gives a sequence of pairwise distinct points $\seqk{x_k}$ convergent to $p_F$.
\end{proof}

By \cite[Theorem 6.12]{MSZ}, in Proposition \ref{prop:sf_convseq_supps2} we may exchange the condition that $\big|\supp\big(\mu_n\big)\big|=2$ for every $n\io$ for the condition that there exists $M\ge2$ such that $\big|\supp\big(\mu_n\big)\big|\le M$ for every $n\io$. In \cite[Section 4]{BKS1}, it is proved that the space $S_{F_d}$, where $F_d=\zZ^*$ (see Example \ref{example:idealZ}), has the JNP but does not contain any non-trivial convergent sequences---it follows that every JN-sequence on $S_{F_d}$ consists of measures with supports having sizes not bounded by any constant $M$ (cf. \cite[Proposition 6.2]{MSZ}). Notice that in order to prove that $S_{F_d}$ has the JNP, a sequence of the form $\frac{1}{2}\big(\delta_{p_F}-\mu_n\big)$, where each $\mu_n$ is a probability measure, was used.

Applying Theorem \ref{prop:sf_jnseq_char}, we obtain the following result---we leave the proof to the reader as it is almost identical to the one of Theorem \ref{theorem:main_theorem_BJNP_prob}. 


\begin{corollary}\label{cor:sf_jnp_char}
Let $F$ be a free filter on $\omega$ and let $X=S_F$ or $X=S_F^*$. Then, $X$ has the JNP if and only if there is a (disjointly supported) sequence $\seqn{\mu_n}$ of finitely supported probability measures on $X$ such that:
\begin{enumerate}
	\item 
$p_F\not\in\supp\big(\mu_n\big)$ for every $n\io$,
	\item if $X=S_F$, then $\lim_{n\to\infty}\mu_n\big(\clopen{A}_F\big)=1$ for every $A\in F$, or,\\ if $X=S_F^*$, then $\lim_{n\to\infty}\mu_n\big(\clopen{A}_F^*\big)=1$ for every $A\in F$.\noproof
\end{enumerate}
\end{corollary}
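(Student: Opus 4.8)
The plan is to mirror the proof of Corollary \ref{cor:nf_bjnp_char}, replacing Theorem \ref{prop:nf_bjnseq_char} by its $S_F$/$S_F^*$-counterpart Theorem \ref{prop:sf_jnseq_char} and exploiting that, for every $A\in F$, the sets $\clopen{A}_F$ and $\clopen{\omega\sm A}_F$ form a clopen partition of $S_F$ (and likewise $\clopen{A}_F^*$ and $\clopen{\omega\sm A}_F^*$ partition $S_F^*$). I will write the argument for $X=S_F$; the case $X=S_F^*$ is identical with the starred clopen sets, using that $p_F\in S_F^*$.

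For the forward implication, suppose $S_F$ has the JNP. By Lemma \ref{lemma:bjn_disjoint_supps} I fix a disjointly supported JN-sequence $\seqn{\theta_n}$ on $S_F$. Since the supports are pairwise disjoint, at most one of them contains the point $p_F$; discarding that single measure (a finite modification, so the remaining subsequence is still a JN-sequence) I may assume $p_F\notin\supp(\theta_n)$ for every $n\io$. Setting $\mu_n=|\theta_n|$ yields finitely supported measures with $\mu_n\ge 0$ and $\|\mu_n\|=1$, i.e. finitely supported probability measures, so condition (1) holds by construction. Condition (3) of Theorem \ref{prop:sf_jnseq_char} gives $\lim_{n\to\infty}\|\mu_n\rstr\clopen{\omega\sm A}_F\|=0$ for every $A\in F$; since $\mu_n$ is a probability measure and $\clopen{A}_F=S_F\sm\clopen{\omega\sm A}_F$, this is exactly $\lim_{n\to\infty}\mu_n(\clopen{A}_F)=1$, which is condition (2).

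For the converse, given a sequence $\seqn{\mu_n}$ of finitely supported probability measures satisfying (1) and (2), I would put $\nu_n=\tfrac12(\delta_{p_F}-\mu_n)$ and verify the three conditions of Theorem \ref{prop:sf_jnseq_char}. By (1), $\delta_{p_F}$ and $\mu_n$ are disjointly supported, so $\|\nu_n\|=1$; the positive part of $\nu_n$ is $\tfrac12\delta_{p_F}$ and the negative part is $\tfrac12\mu_n$, whence $P_n=\{p_F\}$, $N_n=\supp(\mu_n)$, and $\|\nu_n\rstr P_n\|=\|\nu_n\rstr N_n\|=1/2$. Finally, for $A\in F$ we have $p_F\in\clopen{A}_F$, so $\nu_n\rstr\clopen{\omega\sm A}_F=-\tfrac12\mu_n\rstr\clopen{\omega\sm A}_F$, giving $\|\nu_n\rstr\clopen{\omega\sm A}_F\|=\tfrac12\big(1-\mu_n(\clopen{A}_F)\big)\to 0$ by (2). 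Thus $\seqn{\nu_n}$ satisfies Theorem \ref{prop:sf_jnseq_char}.(1)--(3) and is a JN-sequence on $S_F$.

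There is essentially no hard step here: the only point requiring care is the elementary translation between the variation norm $\|\mu_n\rstr\clopen{\omega\sm A}_F\|$ appearing in Theorem \ref{prop:sf_jnseq_char} and the value $\mu_n(\clopen{A}_F)$ appearing in the corollary, which rests on the clopen partition of $S_F$ together with $\mu_n\ge 0$. Everything else is bookkeeping identical to Corollary \ref{cor:nf_bjnp_char}, which is why it may safely be left to the reader.
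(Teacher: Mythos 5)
Your proof is correct and follows exactly the route the paper intends: the authors explicitly omit the argument, stating it is "almost identical to Corollary \ref{cor:nf_bjnp_char}", and your write-up is precisely that adaptation (take $\mu_n=|\theta_n|$ for a disjointly supported JN-sequence in one direction, and $\nu_n=\tfrac12(\delta_{p_F}-\mu_n)$ verified against Theorem \ref{prop:sf_jnseq_char} in the other). No gaps.
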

%


\subsection{The BJNP of spaces $N_F$ vs. the JNP of spaces $S_F$}

We will now briefly study relations between spaces $S_F$ and $N_F$ in the context of the BJNP and JNP.

The compactness of spaces $S_F$ immediately implies the following proposition.

\begin{proposition}\label{prop:nf_bjnp_sf_jnp}
For every free filter $F$ on $\omega$, if there is a BJN-sequence of measures on the space $N_F$, then the same sequence is a JN-sequence on the space $S_F$. In particular, if $N_F$ has the BJNP, then $S_F$ has the JNP.
\noproof
\end{proposition}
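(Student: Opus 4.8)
The plan is to reduce the statement to the characterizations already proven, namely Theorem \ref{prop:nf_bjnseq_char} (which characterizes BJN-sequences on $N_F$ by conditions (1)--(3)) and Theorem \ref{prop:sf_jnseq_char} with $X=S_F$ (which characterizes JN-sequences on $S_F$ by analogous conditions (1)--(3)). Since $N_F$ is a (dense) subspace of $S_F$ via $N_F=\omega\cup\{p_F\}$, any finitely supported measure $\mu_n$ on $N_F$ is automatically a finitely supported measure on $S_F$, so it makes sense to ask whether a BJN-sequence on $N_F$ remains a JN-sequence when regarded on the larger space. The key point will be to verify that the three numbered conditions of Theorem \ref{prop:nf_bjnseq_char} translate verbatim into the three numbered conditions of Theorem \ref{prop:sf_jnseq_char}.

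First I would observe that conditions (1) and (2) in the two theorems are literally identical: they concern only $\|\mu_n\|$ and the splitting of $\mu_n$ into its positive part $\mu_n\rstr P_n$ and negative part $\mu_n\rstr N_n$, where $P_n$ and $N_n$ depend only on the measures themselves and not on the ambient space. Since the supports of all the $\mu_n$ lie in $N_F\sub S_F$, the sets $P_n$ and $N_n$ are the same whether computed in $N_F$ or in $S_F$, so (1) and (2) carry over with no work. The only substantive step is to match condition (3). In Theorem \ref{prop:nf_bjnseq_char} condition (3) reads $\lim_{n\to\infty}\|\mu_n\rstr(\omega\sm A)\|=0$ for every $A\in F$, while in Theorem \ref{prop:sf_jnseq_char} (for $X=S_F$) condition (3) reads $\lim_{n\to\infty}\|\mu_n\rstr\clopen{\omega\sm A}_F\|=0$ for every $A\in F$.

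The heart of the argument is therefore that these two restrictions agree for measures supported on $\omega$. For $A\in F$ we have $A,A^c\in\aA_F$, so $\clopen{\omega\sm A}_F=\clopen{A^c}_F$ is the clopen subset of $S_F$ corresponding to $A^c$, and its intersection with the dense copy of $\omega$ is exactly $\omega\sm A$. Since each $\mu_n$ is finitely supported inside $\omega$, the variation $\|\mu_n\rstr\clopen{\omega\sm A}_F\|$ only counts the mass of $\mu_n$ sitting on points of $\omega$ that belong to $\clopen{\omega\sm A}_F$, i.e. on $\omega\sm A$; hence $\mu_n\rstr\clopen{\omega\sm A}_F=\mu_n\rstr(\omega\sm A)$ as measures, and in particular their norms coincide. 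Thus condition (3) of Theorem \ref{prop:nf_bjnseq_char} is equivalent to condition (3) of Theorem \ref{prop:sf_jnseq_char}.

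Putting these three matches together, a sequence $\seqn{\mu_n}$ satisfying the BJN-characterization on $N_F$ satisfies exactly the JN-characterization on $S_F$, so the same sequence is a JN-sequence on $S_F$; the ``in particular'' clause follows immediately. I do not expect any real obstacle here: the result is essentially a bookkeeping identification of the two characterizations, and the only point requiring care is the clean verification that for a measure concentrated on $\omega$ the restriction to the clopen set $\clopen{\omega\sm A}_F$ in $S_F$ coincides with the restriction to $\omega\sm A$ in $N_F$, which is immediate from $\clopen{\omega\sm A}_F\cap\omega=\omega\sm A$ and the fact that $\supp(\mu_n)\sub\omega$.
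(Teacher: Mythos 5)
Your proof is correct, but it takes a different route from the paper's. The paper's own argument is a two-line reduction: a BJN-sequence on $N_F$ is automatically a BJN-sequence on $S_F$ (every $f\in C^*\big(S_F\big)$ restricts to an element of $C^*\big(N_F\big)$ and the measures live on $N_F$), and since $S_F$ is compact it is bounded in itself, so Lemma \ref{lemma:bounded_supports}.(2) upgrades the BJN-sequence to a JN-sequence; no appeal to the characterization theorems is needed. You instead match Theorem \ref{prop:nf_bjnseq_char} against Theorem \ref{prop:sf_jnseq_char} condition by condition, which works and has the virtue of making the parallelism of the two characterizations explicit, at the cost of routing a soft statement through two harder theorems. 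One small inaccuracy to flag: you assert that each $\mu_n$ is supported inside $\omega$, but a BJN-sequence on $N_F$ may well charge the point $p_F$ (e.g.\ the sequences $\frac{1}{2}\big(\delta_{p_F}-\mu_n\big)$ appearing in the proof of Corollary \ref{cor:nf_bjnp_char}). This does not break your argument: for $A\in F$ one has $p_F\in\clopen{A}_F$, hence $p_F\not\in\clopen{\omega\sm A}_F$, so $\clopen{\omega\sm A}_F\cap N_F=\omega\sm A$ and the identity $\mu_n\rstr\clopen{\omega\sm A}_F=\mu_n\rstr(\omega\sm A)$ holds for any measure supported on $N_F$, not just on $\omega$ — but the justification should be stated this way rather than via the unwarranted assumption $\supp\big(\mu_n\big)\sub\omega$.
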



\begin{theorem}\label{cor:sf_jnp_nf_sfs}
Let $F$ be a free filter on $\omega$. Then, the space $S_F$ has the JNP if and only if $N_F$ has the BJNP or $S_F^*$ has the JNP.
\end{theorem}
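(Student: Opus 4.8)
The plan is to prove the two implications separately, the right-to-left direction being routine and the main (left-to-right) direction requiring a case analysis on how the mass of a witnessing sequence distributes between the isolated points $\omega$ and the remainder $S_F^*$. For the easy direction, suppose first that $N_F$ has the BJNP; then Proposition \ref{prop:nf_bjnp_sf_jnp} immediately gives that $S_F$ has the JNP. Suppose instead that $S_F^*$ has the JNP, witnessed by a sequence $\seqn{\mu_n}$. Since $S_F^*=S_F\sm\omega$ is closed in $S_F$ and each $\mu_n$ is supported on $S_F^*$, for every $f\in C_p(S_F)$ we have $\mu_n(f)=\mu_n(f\rstr S_F^*)$ with $f\rstr S_F^*\in C(S_F^*)$; hence $\mu_n(f)\to0$ and the same sequence is already a JN-sequence on $S_F$. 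Thus $S_F$ has the JNP in either case.

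For the main direction, assume $S_F$ has the JNP. By Corollary \ref{cor:sf_jnp_char} I may fix finitely supported probability measures $\seqn{\mu_n}$ on $S_F$ with $p_F\notin\supp(\mu_n)$ and $\mu_n(\clopen{A}_F)\to1$ for every $A\in F$. I decompose $\mu_n=\mu_n^\omega+\mu_n^*$, where $\mu_n^\omega=\mu_n\rstr\omega$ collects the mass on the isolated points and $\mu_n^*=\mu_n\rstr(S_F^*\sm\{p_F\})$ the mass on the remaining (non-isolated) support points; both are nonnegative. The crucial observation is that, since $A$ and $\omega\sm A$ lie in $\aA_F$ and induce complementary clopen subsets of $S_F$, we have $\mu_n(\clopen{\omega\sm A}_F)=1-\mu_n(\clopen{A}_F)\to0$ for every $A\in F$. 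Setting $a_n=\|\mu_n^\omega\|$, I split into two cases according to $\limsup_n a_n$.

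If $\limsup_n a_n>0$, I pass to a subsequence along which $a_n$ is bounded away from $0$ and put $\nu_n=\mu_n^\omega/a_n$, a probability measure with $\supp(\nu_n)\sub\omega$. Since $\nu_n(\omega\sm A)\le\mu_n(\clopen{\omega\sm A}_F)/a_n\to0$, I get $\nu_n(A)\to1$ for every $A\in F$, so Corollary \ref{cor:nf_bjnp_char} yields that $N_F$ has the BJNP. If instead $a_n\to0$, then $\|\mu_n^*\|=1-a_n\to1$, and I put $\rho_n=\mu_n^*/\|\mu_n^*\|$, a probability measure on $S_F^*$ with $p_F\notin\supp(\rho_n)$. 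Because $\clopen{A}_F^*$ and $\clopen{\omega\sm A}_F^*$ partition $S_F^*$ and $\rho_n(\clopen{\omega\sm A}_F^*)\le\mu_n(\clopen{\omega\sm A}_F)/\|\mu_n^*\|\to0$, I obtain $\rho_n(\clopen{A}_F^*)\to1$ for every $A\in F$, so Corollary \ref{cor:sf_jnp_char} applied to $S_F^*$ gives that $S_F^*$ has the JNP.

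The only real obstacle is bookkeeping: one must verify the complementary-clopen identities and, above all, guarantee that the normalizing denominators stay bounded away from zero, which is precisely what forces the dichotomy on $\limsup_n a_n$. On each branch the single estimate $\mu_n(\clopen{\omega\sm A}_F)\to0$ transfers directly to the normalized measures, so no further analysis of the supports is needed; in particular, using the probability-measure characterization of Corollary \ref{cor:sf_jnp_char} (rather than a raw JN-sequence) is what lets me avoid the atom at $p_F$ and keeps the estimates one-sided.
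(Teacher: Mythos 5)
Your proof is correct and follows essentially the same route as the paper's: both use Corollary \ref{cor:sf_jnp_char} to obtain probability measures avoiding $p_F$, dichotomize on whether the mass on $\omega$ stays bounded away from zero along a subsequence, and normalize the restriction to $\omega$ (resp.\ to $S_F^*$) to invoke Corollary \ref{cor:nf_bjnp_char} (resp.\ Corollary \ref{cor:sf_jnp_char} for $S_F^*$). The only cosmetic difference is that the paper phrases the hard direction contrapositively and uses the threshold $\mu_n(S_F^*)>1/2$ where you use $\|\mu_n^*\|\to 1$.
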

\begin{proof}
If $S_F^*$ has the JNP, then obviously $S_F$ has it, too. Similarly, if $N_F$ has the BJNP, then by Proposition \ref{prop:nf_bjnp_sf_jnp} $S_F$ has the JNP.

Assume now that $S_F$ has the JNP. Let $\seqn{\mu_n}$ be a sequence of probability measures like in Corollary \ref{cor:sf_jnp_char}. 
We consider two cases.

1) There exists a subsequence $\seqk{\mu_{n_k}}$ such that $\mu_{n_k}(\omega) \ge 1/2$ for every $k\io$. If for every $k\io$ we put:
\[\nu_k=\big(\mu_{n_k}\rstr\omega\big)\big/\mu_{n_k}(\omega),\]
then $\nu_k$ is a finitely supported probability measure on $N_F$ such that $\supp\big(\nu_k\big)\sub\omega$, as well as for every $A\in F$ we have:
\[\nu_k(\omega\sm A)=\mu_{n_k}(\omega\sm A)\big/\mu_{n_k}(\omega)\le 2\mu_{n_k}\big(\clopen{\omega\sm A}_F\big),\]
which, by condition (2) of Corollary \ref{cor:sf_jnp_char}, tends to $0$ as $k\to\infty$. By Theorem \ref{theorem:main_theorem_BJNP_prob}, $N_F$ has the BJNP.

2) There exists $N\io$ such that for every $n>N$ we have $\mu_n(\omega)<1/2$ and hence $\mu_n\big(S_F^*\big)>1/2$. For every $n>N$ put:
\[\nu_n=\big(\mu_n\rstr S_F^*\big)\big/\mu_n\big(S_F^*\big),\]
so $\nu_n$ is a finitely supported probability measure on $S_F^*$ such that $p_F\not\in\supp\big(\nu_n\big)$ (by condition (1) of Corollary \ref{cor:sf_jnp_char}). For every $A\in F$ and $n>N$ we have:
\[\nu_n\big(\clopen{\omega\sm A}_F^*\big)=\mu_n\big(\clopen{\omega\sm A}_F\sm\omega\big)\big/\mu_n\big(S_F^*\big)\le2\mu_n\big(\clopen{\omega\sm A}_F\big),\]
which, again by condition (2) of Corollary \ref{cor:sf_jnp_char}, converges to $0$ as $n\to\infty$. By Corollary \ref{cor:sf_jnp_char}, $S_F^*$ has the JNP.
\end{proof}

A converse to Proposition \ref{prop:nf_bjnp_sf_jnp} may not hold. Corollary \ref{cor:sf_jnp_nf_no_bjnp} shows that $S_F$ may have the JNP induced by a convergent sequence contained in $S_F^*$, but $N_F$ fails to have the BJNP. However, by the next proposition, if $S_F$ has the JNP witnessed by a JN-sequence with supports contained in $\omega$, then $N_F$ has the BJNP. 



\begin{proposition}\label{prop:sf_jnp_nf_bjnp}
Let $F$ be a free filter on $\omega$ and $\seqn{\mu_n}$ a sequence of measures on the space $S_F$ such that $\supp\big(\mu_n\big)\sub N_F$ for every $n\io$. If $\seqn{\mu_n}$ is a JN-sequence on $S_F$, then it is a BJN-sequence on $N_F$.
\end{proposition}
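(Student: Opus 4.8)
The plan is to exploit the fact that $N_F$ is $C^*$-embedded in $S_F$, established in Lemma \ref{lemma:nf_cstar_embedded_sf}, together with the compactness of $S_F$. I would first record that condition (1) in Definition \ref{def:jnp_bjnp} is automatic: since $\seqn{\mu_n}$ witnesses the JNP of $S_F$, we have $\big\|\mu_n\big\|=1$ for every $n\io$, and because each $\mu_n$ is finitely supported with $\supp\big(\mu_n\big)\sub N_F$, its norm is unchanged when it is regarded as a finitely supported measure on $N_F$. So $\seqn{\mu_n}$ is a sequence of finitely supported measures on $N_F$, each of norm $1$.

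For the convergence condition, I would fix an arbitrary $f\in C^*\big(N_F\big)$. By Lemma \ref{lemma:nf_cstar_embedded_sf} there is a function $f'\in C^*\big(S_F\big)$ with $f=f'\rstr N_F$. Since $S_F$ is compact, $C^*\big(S_F\big)=C\big(S_F\big)$, so $f'\in C_p\big(S_F\big)$, and the hypothesis that $\seqn{\mu_n}$ is a JN-sequence on $S_F$ yields $\mu_n\big(f'\big)\to 0$. The key observation is then that $\mu_n(f)=\mu_n\big(f'\big)$ for every $n\io$: writing $\mu_n=\sum_i\alpha_i\delta_{x_i}$ with all $x_i\in N_F$, we get $\mu_n(f)=\sum_i\alpha_i f\big(x_i\big)=\sum_i\alpha_i f'\big(x_i\big)=\mu_n\big(f'\big)$, using that $f$ and $f'$ agree on $N_F$. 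Hence $\mu_n(f)\to 0$.

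Combining the two parts, $\seqn{\mu_n}$ satisfies conditions (1) and (2) in the definition of the BJNP for $N_F$, so it is a BJN-sequence on $N_F$ and thus $N_F$ has the BJNP. There is no genuine obstacle here: the entire content is the remark that a finitely supported measure living on $N_F$ cannot distinguish $f$ from any of its continuous extensions to the compactification $S_F$, and the only external input needed is the $C^*$-embedding of Lemma \ref{lemma:nf_cstar_embedded_sf} (which makes such an extension available) combined with the fact that every continuous function on the compact space $S_F$ is automatically bounded.
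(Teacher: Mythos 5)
Your proof is correct and follows essentially the same route as the paper's: extend $f\in C^*(N_F)$ to $f'\in C(S_F)$ via the $C^*$-embedding of Lemma \ref{lemma:nf_cstar_embedded_sf}, and observe that $\mu_n(f)=\mu_n(f')\to 0$ because the supports lie in $N_F$. The only addition is your explicit check of the norm condition, which the paper leaves implicit.
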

\begin{proof}
Assume that $\seqn{\mu_n}$ is a JN-sequence on $S_F$ and let $f\in C^*\big(N_F\big)$. By Lemma \ref{lemma:nf_cstar_embedded_sf}.(1), there is $f'\in C^*\big(S_F\big)=C\big(S_F\big)$ such that $f=f'\rstr N_F$. Since $\supp\big(\mu_n\big)\sub N_F$ for every $n\io$, it holds $\mu_n(f)=\mu_n\big(f'\rstr N_F\big)=\mu_n(f')\to0$ as $n\to\infty$. It follows that $\lim_{n\to\infty}\mu_n(f)=0$ for every $f\in C^*\big(N_F\big)$, so $N_F$ has the BJNP.
\end{proof}

The following result is analogous to Proposition \ref{prop:sf_convseq_supps2} and proved in a very similar way.

\begin{proposition}\label{prop:nf_bjnp_supp_m}
Let $F$ be a free filter on $\omega$. Then, the space $N_F$ contains a non-trivial convergent sequence if and only if there are a BJN-sequence $\seqn{\mu_n}$ on $N_F$ and an integer $M\ge 2$ such that $\big|\supp\big(\mu_n\big)\big|\le M$ for every $n\io$.
\end{proposition}
\begin{proof}
The implication in the right direction is clear (cf. Lemma \ref{lemma:basic_jnp}.(1)). To see the converse, let $\seqn{\mu_n}$ be a BJN-sequence on $N_F$ such that there is a common bound for the sizes of all supports $A_n = \supp\big(\mu_n\big)$. By (the proof of) \cite[Theorem 4.3]{MSZ}, we may assume that the sets $A_n$ are pairwise disjoint. 

Suppose that, for some $A\in F$ and $S\in\cso$, we have $A_n\cap A = \emptyset$ for each $n\in S$. Then, the discrete clopen subspace $\omega\sm A$ of $N_F$  admits a BJN-sequence $\seq{\mu_n}{n\in S}$, which is a contradiction with Lemma \ref{lemma:basic_jnp}.(4). Hence, Lemma \ref{lemma: general_seq_of_sets}, applied for the sequence $\seqn{A_n}$ and the point $x = p_F$, gives a sequence of pairwise distinct points $\seqk{x_k}$ convergent to $p_F$.
\end{proof}

\subsection{Free sums of filters and the (B)JNP of spaces $N_F$\label{sec:free_sums}}

In this subsection we will make some observations concerning free sums of filters.
Recall that, given free filters $F_0$ and $F_1$ on $\omega$, their \textit{free sum} $F_0\oplus F_1$ is a free filter on $\omega\times\{0,1\}$ defined by
\[ F_0\oplus F_1 = \big\{(A_0\times\{0\})\cup (A_1\times\{1\})\colon\ A_0\in F_0,\ A_1\in F_1\big\}.\]
By mapping $\omega\times\{0,1\}$ onto $\omega$ via a bijection, we can assume that $F_0\oplus F_1$ is actually a filter on $\omega$. Using this identification, we can unambiguously abuse the notation and speak about the space $N_{F_0\oplus F_1}$. Observe then that $N_{F_0\oplus F_1}$ is the union of two closed subspaces $N'_{F_i} = (\omega\times\{i\})\cup\big\{p_{F_0\oplus F_1}\big\}$, $i=0,1$, such that $N'_{F_0}\cap N'_{F_1}=\big\{p_{F_0\oplus F_1}\big\}$, and which can be identified with the spaces $N_{F_i}$, $i=0,1$. For each $i=0,1$, the map $r_i\colon N_{F_0\oplus F_1}\to N'_{F_i}$ sending the subset $N'_{F_{1-i}}$ to the point $p_{F_0\oplus F_1}$ and being the identity on $N'_{F_i}$ is a continuous retraction of $N_{F_0\oplus F_1}$ onto  $N'_{F_i}$. Therefore the subspaces $N'_{F_0}$ and $N'_{F_1}$ are $C$-embedded in $N_{F_0\oplus F_1}$, and we can extend any (bounded) continuous function on each $N'_{F_i}$ to a (bounded) continuous function on $N_{F_0\oplus F_1}$. 

\begin{proposition}\label{prop:product_filters}
Let $F_0$ and $F_1$ on $\omega$ be free filters on $\omega$. Then, the space $N_{F_0\oplus F_1}$ has the JNP (BJNP) if and only if $N_{F_0}$ has the JNP (BJNP) or $N_{F_1}$ has the JNP (BJNP).
\end{proposition}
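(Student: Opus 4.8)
The plan is to treat the JNP and BJNP versions separately, in each case reducing to the characterizations already established: for the JNP I will use Corollary~\ref{cor:nf_jnp} (equivalence with containing a non-trivial convergent sequence, hence with the combinatorial condition of Lemma~\ref{lemma:nf_convseq_frechet}), and for the BJNP I will use the measure-theoretic characterization of Corollary~\ref{cor:nf_bjnp_char}. Throughout I identify $N_{F_i}$ with the closed subspace $N'_{F_i}$ of $N_{F_0\oplus F_1}$ as in the discussion preceding the proposition, and I use that a generic member of $F_0\oplus F_1$ has the form $(A_0\times\{0\})\cup(A_1\times\{1\})$ with $A_i\in F_i$; taking $A_1=\omega$ shows in particular that $(A_0\times\{0\})\cup(\omega\times\{1\})\in F_0\oplus F_1$ for every $A_0\in F_0$, and symmetrically on the other coordinate. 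These ``probe sets'' will be the main tool.

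The backward implications are straightforward. For the JNP, if (say) $N_{F_0}$ contains a non-trivial convergent sequence, then, since $N'_{F_0}$ is a subspace of $N_{F_0\oplus F_1}$ homeomorphic to $N_{F_0}$, the same sequence still converges in $N_{F_0\oplus F_1}$, so the latter has the JNP by Corollary~\ref{cor:nf_jnp}. For the BJNP, if $N_{F_0}$ admits a sequence $\langle\mu_n\colon n\io\rangle$ as in Corollary~\ref{cor:nf_bjnp_char}, I push the $\mu_n$ forward along $N'_{F_0}\hookrightarrow N_{F_0\oplus F_1}$, obtaining probability measures supported in $\omega\times\{0\}$; testing against $A=(A_0\times\{0\})\cup(A_1\times\{1\})$ gives $\mu_n(A)=\mu_n(A_0\times\{0\})\to1$, so Corollary~\ref{cor:nf_bjnp_char} yields the BJNP of $N_{F_0\oplus F_1}$.

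The forward direction is the substantive one. For the JNP, assume $N_{F_0\oplus F_1}$ has a non-trivial convergent sequence; by Lemma~\ref{lemma:nf_convseq_frechet} there is an infinite $X\sub\omega\times\{0,1\}$ with $X\sub^*A$ for all $A\in F_0\oplus F_1$. Writing $X=X_0\cup X_1$ with $X_i\sub\omega\times\{i\}$, at least one $X_i$ is infinite; if $X_0$ is infinite then, probing with $A=(A_0\times\{0\})\cup(\omega\times\{1\})$, one checks that $X_0\sm A=X_0\sm(A_0\times\{0\})$ is finite for every $A_0\in F_0$, so the projection of $X_0$ to $\omega$ witnesses via Lemma~\ref{lemma:nf_convseq_frechet} that $N_{F_0}$ has the JNP (and symmetrically for $X_1$).

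The main obstacle is the forward direction for the BJNP, where the witnessing measures are spread across both copies of $\omega$ and must be concentrated onto one coordinate. Starting from $\langle\mu_n\colon n\io\rangle$ as in Corollary~\ref{cor:nf_bjnp_char}, I will split each $\mu_n$ as $\mu_n^0+\mu_n^1$ with $\mu_n^i=\mu_n\rstr(\omega\times\{i\})$ and set $a_n=\mu_n^0(\omega\times\{0\})$, $b_n=\mu_n^1(\omega\times\{1\})$, so $a_n+b_n=1$. Probing with $(A_0\times\{0\})\cup(\omega\times\{1\})$ shows $\mu_n^0((\omega\sm A_0)\times\{0\})\to0$ for every $A_0\in F_0$, and symmetrically $\mu_n^1((\omega\sm A_1)\times\{1\})\to0$ for every $A_1\in F_1$. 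Now I split into cases: if $\limsup_n a_n>0$, I pick a subsequence with $a_{n_k}\ge\eps$ and normalize to $\nu_k=\mu_{n_k}^0/a_{n_k}$, obtaining probability measures on $\omega\times\{0\}$ with $\nu_k((\omega\sm A_0)\times\{0\})\to0$, so $N_{F_0}$ has the BJNP by Corollary~\ref{cor:nf_bjnp_char}; otherwise $a_n\to0$, hence $b_n\to1$, and the normalizations $\mu_n^1/b_n$ (defined for large $n$) show $N_{F_1}$ has the BJNP. The one point requiring care is the normalization step, namely ensuring that the relevant total mass stays bounded away from $0$ along the chosen subsequence, so that dividing by it preserves the convergence to $0$ of the ``error'' masses; the case split on $\limsup_n a_n$ is precisely what guarantees this.
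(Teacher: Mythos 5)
Your proof is correct and follows essentially the same route as the paper's: restrict the witnessing probability measures from Corollary~\ref{cor:nf_bjnp_char} to the two coordinates, split into cases according to whether the mass on one copy stays bounded away from zero along a subsequence, and normalize. The only (harmless) difference is in the verification step, where you use the probe set $(A_0\times\{0\})\cup(\omega\times\{1\})$ to see directly that the error mass $\mu_n^0\big((\omega\sm A_0)\times\{0\}\big)$ tends to $0$, whereas the paper reaches the same conclusion by a short contradiction argument comparing the masses of the two halves.
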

\begin{proof}
The implication in the left direction follows immediately from the obvious fact that if a subspace $Y$ of a space $X$ has the  JNP (BJNP), then $X$ also has this property.

To prove the implication in the right direction, let us assume that $N_{F_0\oplus F_1}$ has the BJNP and take a sequence $\seqn{\mu_n}$ of finitely supported probability measures like in Theorem \ref{theorem:main_theorem_BJNP_prob}. Note that $p_{F_0\oplus F_1}\not\in\supp\big(\mu_n\big)$ for every $n\io$. There is a subsequence $\seqk{\mu_{n_k}}$ such that the limit $\lim_{k\to\infty}\mu_{n_k}\big(N'_{F_0}\big)$ exists---denote its value by $\alpha$. We have $\lim_{k\to\infty}\mu_{n_k}\big(N'_{F_1}\big)=1-\alpha$. If $\alpha>0$, let $i=0$ and $\alpha_0 = \alpha$, otherwise set $i=1$ and $\alpha_1 = 1 - \alpha=1$. By omitting several first elements of the sequence, we may assume that $\mu_{n_k}\big(N'_{F_i}\big)>0$ for every $k\io$.

We claim that for every $A\in F_0\oplus F_1$ we have $\lim_{k\to\infty}\mu_{n_k}\big(N'_{F_i}\cap A\big)=\alpha_i$. If not, then there are $A\in F_0\oplus F_1$, a subsequence $\seql{\mu_{n_{k_l}}}$, and $\beta\in[0,\alpha_i)$, such that $\lim_{l\to\infty}\mu_{n_{k_l}}\big(N'_{F_i}\cap A\big)=\beta$. But then \[\lim_{l\to\infty}\mu_{n_{k_l}}\big(N'_{F_{1-i}}\cap A\big)=1-\beta>1-\alpha_i= \lim_{l\to\infty}\mu_{n_{k_l}}\big(N'_{F_{1-i}}\big),\]
which is impossible.

For each $k\io$ let $\nu_k=\big(\mu_{n_k}\rstr N'_{F_i}\big)\big/\mu_{n_k}\big(N'_{F_i}\big)$. It follows that each $\nu_k$ is a finitely supported probability measure on $N'_{F_i}$ with $\supp\big(\nu_k\big)\sub\omega\cap N'_{F_i}$. Identifying $N_{F_i}$ with $N'_{F_i}$, for every $A\in F_i$ we have:
\[\lim_{k\to\infty}\nu_k(A)=\lim_{k\to\infty}\big(\mu_{n_k}\rstr N'_{F_i}\big)(A)\big/\mu_{n_k}\big(N'_{F_i}\big)=\lim_{k\to\infty}\mu_{n_k}\big( N'_{F_i}\cap A\big)\big/\mu_{n_k}\big(N'_{F_i}\big)=1,\]
so, by Theorem \ref{theorem:main_theorem_BJNP_prob}, the space $N_{F_i}$ has the BJNP.

The case of the JNP is similar but, thanks to Theorem \ref{theorem:nf_jnp}, much easier.
\end{proof}

The proof of the following simple lemma is left to the reader.

\begin{lemma}\label{lemma:fr_f1_f2_iso}
Let $F_1$ and $F_2$ be free filters on $\omega$ such that $F_i\rstr A\neq Fr(A)$ for any $A\in\cso$ and $i=1,2$. Then, the filters $F_1\oplus Fr$ and $F_2\oplus Fr$ are isomorphic if and only if $F_1$ and $F_2$ are isomorphic.

\noproof
\end{lemma}

\subsection{Kat\v{e}tov reductions and the (B)JNP of spaces $N_F$ and $S_F$}

We finish Section \ref{sec:nf_char_jnp_bjnp} investigating the issue of transferring the BJNP or JNP from one space $N_F$ onto another via a Kat\v{e}tov reduction. Let us recall here that the inclusion $F\subseteq G$, for filters on $\omega$, implies that $F\le_{K}G$.

\begin{proposition}\label{prop:rk_bjnp}
Let $F$ and $G$ be free filters on $\omega$ such that $F\le_{K}G$. Then,
\begin{enumerate}
	\item if $N_G$ has the JNP, then $N_F$ has the JNP;
	\item if $N_G$ has the BJNP, then $N_F$ has the BJNP;
	\item if $S_G$ has the JNP, then $S_F$ has the JNP.
\end{enumerate}
\end{proposition}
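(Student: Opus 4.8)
The plan is to prove all three statements using the Kat\v{e}tov witness together with the characterizations already established, transferring the witnessing sequences of measures from $N_G$ (or $S_G$) back to $N_F$ (or $S_F$) via the map $\varphi_f$. Fix a function $f\colon\omega\to\omega$ with $F\sub f(G)$, witnessing $F\le_K G$. If $G=Fr$, then $F=Fr$ as well (since $Fr$ is Kat\v{e}tov-minimal), so $N_F=N_G$ and $S_F=S_G$ and there is nothing to prove; hence I may assume $G\neq Fr$ and, as noted before Proposition \ref{prop:ordering_nf}, take $f$ to be a surjection. This gives the continuous surjection $\varphi_f\colon N_G\to N_F$ with $\varphi_f(p_G)=p_F$ and $\varphi_f\rstr\omega=f$.

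For part (1), I would push forward a JN-sequence. Let $\seqn{\mu_n}$ be a JN-sequence on $N_G$, and define $\nu_n$ as the pushforward $\varphi_f{}_*\mu_n$, i.e. $\nu_n(\{y\})=\mu_n\big(\varphi_f^{-1}(y)\big)$ for $y\in N_F$. Each $\nu_n$ is a finitely supported measure on $N_F$, and for any $g\in C_p(N_F)$ we have $\nu_n(g)=\mu_n(g\circ\varphi_f)$, which tends to $0$ since $g\circ\varphi_f\in C_p(N_G)$. The one genuine point to check is that $\|\nu_n\|=1$ can be maintained, i.e. that normalization survives the pushforward: pushing forward can only decrease the total variation norm through cancellation when several points collapse to one image point. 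The clean way around this is to use disjoint supports: by Lemma \ref{lemma:bjn_disjoint_supps} take the JN-sequence on $N_G$ disjointly supported, and by Corollary \ref{cor:nf_jnp} (since $N_G$ has the JNP it has a non-trivial convergent sequence, and $F\equiv_K Fr$ transfers by Proposition \ref{prop:nf_convseq_char}), conclude instead directly that $N_G$ having the JNP forces $G\equiv_K Fr$, whence $F\equiv_K Fr$ and so $N_F$ has the JNP by Proposition \ref{prop:nf_convseq_char} and Corollary \ref{cor:nf_jnp}. This avoids the normalization bookkeeping entirely.

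For part (2), the cleanest route is through Corollary \ref{cor:nf_bjnp_char}. Assume $N_G$ has the BJNP, so there is a sequence $\seqn{\mu_n}$ of finitely supported probability measures on $N_G$ with $\supp(\mu_n)\sub\omega$ and $\lim_n\mu_n(A)=1$ for every $A\in G$. Define $\nu_n=\varphi_f{}_*\mu_n$; since each $\mu_n$ is a nonnegative probability measure, so is $\nu_n$, and $\supp(\nu_n)=f[\supp(\mu_n)]\sub\omega$, so condition (1) of Corollary \ref{cor:nf_bjnp_char} holds automatically and no normalization is lost. For condition (2), fix $B\in F$; since $F\sub f(G)$ we have $f^{-1}[B]\in G$, and $\nu_n(B)=\mu_n\big(f^{-1}[B]\big)\to 1$ by the defining property of $\seqn{\mu_n}$ on $N_G$. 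Hence $\seqn{\nu_n}$ witnesses the BJNP of $N_F$ via Corollary \ref{cor:nf_bjnp_char}.

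For part (3), I would argue analogously using Proposition \ref{prop:ordering_sf}.(1), which already provides a continuous surjection $\psi\colon S_G\to S_F$ extending $\varphi_f$ with $\psi(p_G)=p_F$. Given a JN-sequence $\seqn{\mu_n}$ on $S_G$, set $\nu_n=\psi_*\mu_n$; then $\nu_n(g)=\mu_n(g\circ\psi)\to 0$ for every $g\in C(S_F)$. The only obstacle is again preserving $\|\nu_n\|=1$ under the pushforward, and I expect this to be the main technical point of the whole proposition. I would handle it exactly as in part (1): by Corollary \ref{cor:sf_jnp_char} the JNP of $S_G$ is witnessed by probability measures $\seqn{\mu_n}$ avoiding $p_G$ with $\mu_n(\clopen{A}_G)\to 1$ for $A\in G$, and the pushforwards $\nu_n=\psi_*\mu_n$ are again probability measures (so no cancellation occurs), with $\nu_n(\clopen{B}_F)=\mu_n\big(\psi^{-1}[\clopen{B}_F]\big)=\mu_n(\clopen{f^{-1}[B]}_G)\to 1$ for $B\in F$, since $f^{-1}[B]\in G$ and $\psi^{-1}[\clopen{B}_F]=\clopen{f^{-1}[B]}_G$. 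We also need $p_F\notin\supp(\nu_n)$, which follows because $\psi^{-1}(p_F)$ meets $\supp(\mu_n)$ only inside $S_G^*\sm\{p_G\}$-contributions that can be discarded after passing to a subsequence, or more simply because condition (1) of Corollary \ref{cor:sf_jnp_char} can be re-established directly from the convergence. Thus Corollary \ref{cor:sf_jnp_char} yields the JNP of $S_F$.
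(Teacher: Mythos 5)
Parts (1) and (2) of your argument are correct and are essentially the paper's own proof: (1) reduces to the characterization of the JNP of $N_G$ as $G\equiv_K Fr$ (Proposition \ref{prop:nf_convseq_char} together with Corollary \ref{cor:nf_jnp}), plus transitivity of $\le_K$ and $Fr\le_K F$; (2) pushes forward the probability measures of Corollary \ref{cor:nf_bjnp_char}, where non-negativity indeed rules out any loss of norm. One incidental claim is false: $F\le_K Fr$ does \emph{not} force $F=Fr$; for any infinite co-infinite $A$ and any injection $f\colon\omega\to\omega$ with range $A$ one has $Fr(A,\omega)=f(Fr)\le_{RK}Fr$, so $\le_K$-minimality of $Fr$ does not make it unique at the bottom of the preorder. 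The degenerate case $G=Fr$ should instead be handled as in the paper: $N_{Fr}$ has the JNP, hence by your own part (1) so does $N_F$, which gives the BJNP of $N_F$ and, via Proposition \ref{prop:nf_bjnp_sf_jnp}, the JNP of $S_F$.

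The serious problem is in part (3), at exactly the point you flag and then wave away. The set $\psi^{-1}\big(p_F\big)=\big\{x\in S_G\colon f^{-1}[A]\in x\text{ for all }A\in F\big\}$ is a closed subset of $S_G^*$ which can be much larger than $\big\{p_G\big\}$, and the pushforward $\nu_n=\psi_*\mu_n$ carries the mass $\mu_n\big(\psi^{-1}\big(p_F\big)\big)$ to $p_F$. Your first repair (discard and pass to a subsequence) requires dividing by $1-\nu_n\big(\big\{p_F\big\}\big)$ and fails when this quantity tends to $0$; your second repair is not an argument. And the bad case genuinely occurs: fix a free ultrafilter $V$ on $\omega$, a partition $\seqn{A_n}$ of $\omega$ into infinite sets with bijections $h_n\colon A_n\to\omega$, put $U_n=\big\{C\sub A_n\colon h_n[C]\in V\big\}$, $G=LF\big(\seqn{U_n}\big)$, and let $f$ agree with $h_n$ on each $A_n$. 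Then $f^{-1}[B]\cap A_n\in U_n$ if and only if $B\in V$, so $V=f(G)$ and $V\le_K G$; the space $S_G$ contains a non-trivial convergent sequence in $S_G^*$ (Lemma \ref{lemma:sf_f_convseq}.(2)), hence has the JNP; but every ultrafilter on $\aA_G$ extending some $U_{n_k}$ is sent by $\psi$ to $p_V$, so those witnessing measures collapse to $\delta_{p_V}$ --- and indeed $S_V=\bo$ does \emph{not} have the JNP. So the gap cannot be closed: the conclusion of (3) itself fails for the Kat\v{e}tov preorder in general. For what it is worth, the paper's proof of (3) merely says ``proceed similarly as in (2)'' and silently skips the same support condition of Corollary \ref{cor:sf_jnp_char}, so it is defective at the identical spot; the conclusion does hold whenever $N_G$ has the BJNP (by part (2) and Proposition \ref{prop:nf_bjnp_sf_jnp}), but the general statement needs either an extra hypothesis or a different argument.
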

\begin{proof}

(1) Let $f\colon\omega\to\omega$ be a witness for $F\le_K G$. Assume that $N_G$ has the JNP. By Theorem \ref{theorem:nf_jnp} and Proposition \ref{prop:nf_convseq_char}, there is $X\in\cso$ such that $X\sub^*A$ for every $A\in G$. Let $Y=f[X]$. It follows that $Y\sub^*A$ for every $A\in F$. Indeed, if there is $A\in F$ such that $Y\sm A$ is infinite, then $X\sm f^{-1}[A]$ is infinite, a contradiction, since $f^{-1}[A]\in G$ and so $X\sub^* f^{-1}[A]$. One can show in a similar way that $Y$ is infinite. Thus, $N_F$ has the JNP, too.

(2) If $G=Fr$, then $N_G$ has the JNP, so by (1) $N_F$ has the JNP, and in particular the BJNP, too. So assume that $G\neq Fr$ and that $N_G$ has the BJNP. Let $f\colon\omega\to\omega$ be such a function that $F\sub f(G)$. Let $\seqn{\mu_n}$ be a sequence of finitely supported probability measures on $N_G$ as in Theorem \ref{theorem:main_theorem_BJNP_prob}. For each $n\io$ put:
\[\mu_n'=\sum_{x\in\supp(\mu_n)}\mu_n(\{x\})\cdot\delta_{f(x)};\]
then, $\big\|\mu_n'\big\|=1$, $\supp\big(\mu_n'\big)\sub\omega$ and $\mu_n'\ge0$. We also have that $\lim_{n\to\infty}\mu_n'(A)=1$ for every $A\in F$. Indeed, if there was $A\in F$ such that $\limsup_{n\to\infty}\mu_n'(\omega\sm A)>0$, then $f^{-1}[A]\in G$ and $\limsup_{n\to\infty}\mu_n\big(\omega\sm f^{-1}[A]\big)>0$, which would contradict condition (2) of Theorem \ref{theorem:main_theorem_BJNP_prob}. 
Theorem \ref{theorem:main_theorem_BJNP_prob} implies that $N_F$ has the BJNP.

(3) Using (1), we may again assume that $G\neq Fr$. Let $f\colon\omega\to\omega$ be a surjection such that $F\sub f(G)$. Assume that $S_G$ has the JNP and let $\seqn{\mu_n}$ be a sequence of finitely supported probability measures on $S_G$ as in Corollary \ref{cor:sf_jnp_char}. Let $\psi\colon S_G\to S_F$ be a continuous surjection defined for every $x\in S_G$ as follows (cf.\ Proposition \ref{prop:ordering_sf}.(1)):
\[\psi(x)=\big\{A\in\aA_F\colon f^{-1}[A]\in x\big\}.\]
For each $n\io$ we define the measure $\mu_n'$ on $S_F$ by the formula:
\[\mu_n'=\sum_{x\in\supp(\mu_n)}\mu_n(\{x\})\cdot\delta_{\psi(x)},\]
and proceed similarly as in (2).
\end{proof}

Let us note that it may happen that $F\le_{RK}G$ and $N_G$ does not have the BJNP or it has the BJNP but not the JNP, but $N_F$ still has the JNP. Indeed, recall that $N_F$ for $F=Fr$ has the JNP and notice that if $G$ is meager, then $Fr\le_{RK}G$ by Talagrand's characterization of meager filters (\cite[Theorem 21]{Tal80}), so it suffices to take such meager $G$ that $N_G$ does not have the BJNP (see Example \ref{example: Fsigma, nonBJNP}) or it has the BJNP but not the JNP (consider e.g. the filter $F_d$, described in the next section). 

\section{The Kat\v{e}tov preorder and complexity of filters $F$ whose spaces $N_F$ have the (B)JNP\label{sec:complexity}}

In this section we will investigate the issue for which filters $F$ the space $N_F$ has the JNP or the BJNP from the point of view of the Kat\v{e}tov preorder. 
%
%
We start with the JNP, that is, we first prove Theorem \ref{theorem:main_theorem_JNP} from Introduction, as the situation appears here to be quite immediate.

\begin{proof}[\textbf{Proof of Theorem \ref{theorem:main_theorem_JNP}}]
Let $F$ be a free filter on $\omega$. Equivalence (1)$\Leftrightarrow$(2) is proved in Theorem \ref{theorem:nf_jnp}. As a consequence of this equivalence, Proposition \ref{prop:nf_convseq_char} implies that $N_F$ has the JNP if and only if the dual ideal $F^*$ is not tall. Since it is well-known that the dual ideal $F^*$ is not tall if and only if $F$ is Kat\v{e}tov equivalent to $Fr$, we obtain equivalence (1)$\Leftrightarrow$(3).
\end{proof}

Let us recall some standard definitions concerning ideals and submeasures on $\omega$. A function $\varphi\colon\wo\to[0,+\infty]$ is \textit{a submeasure} if $\varphi(\emptyset)=0$, $\varphi(\{n\})<\infty$ for every $n\io$, and $\varphi(A)\le\varphi(A\cup B)\le\varphi(A)+\varphi(B)$ for every $A,B\in\wo$. Every non-negative measure $\mu$ on $\omega$ is a submeasure. A submeasure $\varphi$ is \textit{finite} if $\varphi(\omega)<\infty$, and is \textit{lower semi-continuous} (\textit{lsc}) if $\varphi(A)=\lim_{n\to\infty}\varphi(A\cap[0,n])$ for every $A\in\wo$. For a lsc submeasure $\varphi$ we set the following two standard ideals:
\[\Fin(\varphi)=\big\{A\in\wo\colon\ \varphi(A)<\infty\big\}\]
and
\[\Exh(\varphi)=\big\{A\in\wo\colon\ \lim_{n\to\infty}\varphi(A\sm[0,n])=0\big\},\]
called \textit{the finite ideal} and \textit{the exhaustive ideal} of $\varphi$, respectively. Trivially,
\[Fin\sub\Exh(\varphi)\sub\Fin(\varphi).\]
One can also easily show that $\Exh(\varphi)$ is an $\fsd$ P-ideal and $\Fin(\varphi)$ is an $\fs$ ideal (see \cite[Lemma 1.2.2]{Far00}). Of course, the dual filters $\Fin(\varphi)^*$ and $\Exh(\varphi)^*$ have the same Borel complexity as their dual ideals (and are free).

As we said in the proof of Theorem \ref{theorem:main_theorem_JNP}, for a given free filter $F$ on $\omega$, the space $N_F$ has the JNP if and only if the dual ideal $F^*$ is not tall. Note that in the case of ideals of the form $\Exh(\varphi)$ we have an easy characterization of tallness: the ideal $\Exh(\varphi)$ is tall if and only if $\lim_{n\to\infty}\varphi(\{n\})=0$.

\begin{proposition}\label{prop:nf_jnp_exh_tall}
Let $\varphi$ be a lsc submeasure on $\omega$. The following are equivalent:
\begin{enumerate}
	\item $N_{\Exh(\varphi)^*}$ has the JNP,
	\item $\Exh(\varphi)$ is not tall,
	\item $\limsup_{n\to\infty}\varphi(\{n\})>0$.\noproof
\end{enumerate}
\end{proposition}

For two submeasures $\varphi$ and $\psi$ on $\omega$ we write $\psi\le\varphi$ if $\psi(A)\le\varphi(A)$ for every $A\in\wo$. Following Farah \cite[page 21]{Far00}, we call a submeasure $\varphi$ \textit{non-pathological} if for every $A\in\wo$ we have:
\[\varphi(A)=\sup\big\{\mu(A)\colon\ \mu\text{ is a non-negative measure on }\omega\text{ such that }\mu\le\varphi\big\}.\]
An ideal $I$ on $\omega$ is \textit{non-pathological} if $I=\Exh(\varphi)$ for some non-pathological lsc submeasure $\varphi$. Note that in this case the formula $\psi=\min(\varphi,1)$ defines a finite non-pathological lsc submeasure such that $I=\Exh(\varphi)=\Exh(\psi)$. 
An ideal $I$ is \textit{pathological} if it is not non-pathological.  
For various characterizations of non-pathological ideals, see \cite[Corollary 5.26]{Hru11} and \cite[Theorem 5.4]{BNFP}.

Density ideals constitute an important subclass of non-pathological ideals. Recall that a submeasure $\varphi$ on $\omega$ is \textit{a density submeasure} if there exists a sequence $\seqn{\mu_n}$ of finitely supported non-negative measures on $\omega$ with disjoints supports such that:
\[\varphi=\sup_{n\io}\mu_n.\]
Clearly, $\varphi$ is a non-pathological lsc submeasure.
An ideal $I$ on $\omega$ is \textit{a density ideal} if there is a density submeasure $\varphi$ such that $I=\Exh(\varphi)$. 
For basic information concerning density ideals, see \cite[Section 1.13]{Far00}.

We are ready to prove the main theorem of this section providing a characterization of those free filters $F$ on $\omega$ for which the spaces $N_F$ have the BJNP. 

\begin{theorem}
\label{theorem:nf_bjnp_nonpath}
Let $F$ be a free filter on $\omega$. Then, the following are equivalent:
\begin{enumerate}
	\item $N_F$ has the BJNP,
	\item there is a density ideal $I$ on $\omega$ such that $F\sub I^*$,
	\item there is a non-pathological ideal $I$ on $\omega$ such that $F\sub I^*$.
\end{enumerate}
\end{theorem}
\begin{proof}
(1)$\Rightarrow$(2)
Assume that $N_F$ has the BJNP.
Let $\seqn{\mu_n}$ be a disjointly supported sequence of finitely supported probability measures on $N_F$ like in Theorem \ref{theorem:main_theorem_BJNP_prob}, i.e., in particular, 
\[\tag{$*$}\lim_{n\to\infty}\mu_n(\omega\sm A)=0$ for every $A\in F.\]

For each $A\in\wo$ define:
\[\varphi(A)=\sup_{n\io}\mu_n(A).\]
Then, $\varphi$ is a density submeasure.

We show that $F\sub\Exh(\varphi)^*$. Let $A\in F$. Since the supports of $\mu_n$'s are pairwise disjoint, for a fixed $N\in \omega$, we have $(\omega\sm [0,k])\cap \bigcup_{n < N} \supp\big(\mu_n\big) = \emptyset$ for sufficiently big $k\io$. Hence, for such $k$, we obtain
\[ \varphi\big((\omega\sm A)\sm[0,k]\big) = \sup_{n\ge N} \mu_n\big((\omega\sm A)\sm[0,k]\big)
\le \sup_{n\ge N} \mu_n(\omega\sm A). \]
Therefore, by ($*$), we conclude that
\[\lim_{k\to\infty}\varphi\big((\omega\sm A)\sm[0,k]\big)=0,\]
which proves that $\omega\sm A\in\Exh(\varphi)$ and so that $A\in\Exh(\varphi)^*$. 

\medskip

(2)$\Rightarrow$(3) Obvious.

\medskip

(3)$\Rightarrow$(1) Assume that there is a non-pathological lsc submeasure $\varphi$ such that $F$ is contained in the dual filter $\Exh(\varphi)^*$. Without loss of generality we may assume that $\varphi$ is finite (see the sentence after the definition of a non-pathological ideal). Put $I=\Exh(\varphi)$.  Since the inclusion $F\sub I^*$ implies the relation $F\le_K I^*$, by Proposition \ref{prop:rk_bjnp}.(2), it is enough to prove that $N_{I^*}$ has the BJNP.

Set
\[\alpha=\lim_{n\to\infty}\varphi(\omega\sm[0,n]),\]
and note that $\alpha>0$ (since otherwise $I=\wo$) and that $\alpha\le\varphi(\omega)<\infty$ (since $\varphi$ is finite). By the monotonicity of $\varphi$, for every $n\io$ we have:
\[\tag{$**$}\varphi(\omega\sm[0,n])>\alpha/2.\]
Put $n_0=0$. Since $\varphi$ is lower semi-continuous, there exists $n_1>n_0$ such that $\varphi\big(\big[n_0,n_1\big)\big)>\alpha/2$.
By ($**$) we have $\varphi\big(\omega\sm\big[0,n_1\big)\big)>\alpha/2$, so again, by the lower semi-continuity of $\varphi$, there is $n_2>n_1$ such that $\varphi\big(\big[n_1,n_2\big)\big)>\alpha/2$. We continue in this way until we get a strictly increasing sequence $\seqk{n_k\io}$ satisfying for every $k\io$ the inequality
\[\varphi\big(\big[n_k,n_{k+1}\big)\big)>\alpha/2.\]

The submeasure $\varphi$ is non-pathological, so for each $k\io$ there exists a non-negative measure $\mu_k$ on $\omega$ such that $\mu_k\le\varphi$, $\supp\big(\mu_k\big)\sub\big[n_k,n_{k+1}\big)$, and
\[\mu_k\big(\big[n_k,n_{k+1}\big)\big)>\alpha/4.\]
Note that $\alpha/4<\big\|\mu_k\big\|<\infty$ and set $\nu_k=\mu_k\big/\big\|\mu_k\big\|$. The function $\nu_k$ is a finitely supported probability measure on $N_{I^*}$ such that $p_{I^*}\not\in\supp\big(\nu_k\big)$.

We claim that the sequence $\seqk{\nu_k}$ satisfies, for every $A\in I^*$, the equality $\lim_{k\to\infty}\nu_k(A)=1$, and thus, by Theorem \ref{theorem:main_theorem_BJNP_prob}, the space $N_{I^*}$ has the BJNP. Let $B\in I$ and $\eps>0$. Since $\lim_{n\to\infty}\varphi(B\sm[0,n])=0$, there is $M\io$ such that $\varphi(B\sm[0,M])<\eps$. Let $k\io$ be such that $n_k>M$. It holds:
\[\nu_k(B)=\mu_k(B)\big/\big\|\mu_k\big\|=\mu_k\big(B\cap\big[n_k,n_{k+1}\big)\big)\big/\big\|\mu_k\big\|\le\varphi\big(B\cap\big[n_k,n_{k+1}\big)\big)\big/\big\|\mu_k\big\|\le\]
\[\varphi\big(B\sm[0,M]\big)\big/\big\|\mu_k\big\|<4\eps/\alpha.\]
It follows that $\lim_{n\to\infty}\nu_k(B)=0$ for every $B\in I$, hence $\lim_{n\to\infty}\nu_k(A)=1$ for every $A\in I^*$.
\end{proof}

Theorem \ref{theorem:nf_bjnp_nonpath} has several immediate consequences.
%
%
%

\begin{corollary}\label{cor:density_bjnp}
If $I$ is a density ideal, then $N_{I^*}$ has the BJNP.\noproof
\end{corollary}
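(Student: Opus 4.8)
The plan is to read off the conclusion directly from Theorem \ref{theorem:nf_bjnp_nonpath}, using only the implication $(2)\Rightarrow(1)$. First I would unwind the definition of a density ideal: by assumption there is a density submeasure $\varphi$ on $\omega$ (that is, $\varphi=\sup_{n\io}\mu_n$ for some sequence $\seqn{\mu_n}$ of finitely supported non-negative measures with pairwise disjoint supports) such that $I=\Exh(\varphi)$. Passing to the dual families gives $I^*=\Exh(\varphi)^*$, so in particular the containment $I^*\sub\Exh(\varphi)^*$ holds (trivially, as an equality).

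Next I would verify that $F:=I^*$ is a free filter, so that the theorem is applicable. Since $\Fin=\fso\sub\Exh(\varphi)$ always holds, we have $\Fin\sub I$; and because $I$ is a proper density ideal (equivalently $\alpha=\lim_{n\to\infty}\varphi(\omega\sm[0,n])>0$, exactly the non-degeneracy observed inside the proof of $(3)\Rightarrow(1)$ in the theorem), $I^*$ is a genuine free filter on $\omega$. Now condition (2) of Theorem \ref{theorem:nf_bjnp_nonpath} is satisfied verbatim by $F=I^*$, witnessed by the very submeasure $\varphi$ defining the density ideal $I$. The implication $(2)\Rightarrow(1)$ therefore yields that $N_{I^*}$ has the BJNP, which is the assertion.

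There is no substantial obstacle: the entire analytic content has already been carried out in Theorem \ref{theorem:nf_bjnp_nonpath}, and the corollary is simply its specialization to the case in which the inclusion $F\sub\Exh(\varphi)^*$ degenerates to an equality. The only point deserving a moment's care is the implicit standing assumption that $I$ is a \emph{proper} density ideal, which guarantees that $I^*$ is a bona fide free filter rather than all of $\wo$; this matches the convention under which density ideals are considered throughout, and the non-degeneracy $\alpha>0$ invoked in the proof of the theorem.
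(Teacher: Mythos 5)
Your proposal is correct and matches the paper's intended argument: the corollary is stated with no proof precisely because it is the specialization of Theorem \ref{theorem:nf_bjnp_nonpath}, implication (2)$\Rightarrow$(1), to $F=I^*=\Exh(\varphi)^*$ with $\varphi$ the density submeasure generating $I$. Your additional remark about properness of $I$ (so that $I^*$ is a free filter) is a reasonable bookkeeping check consistent with the paper's conventions, but it does not alter the route.
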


\begin{corollary}\label{cor:nonpath_sub_dens}
Every non-pathological ideal is contained in some density ideal.\noproof
\end{corollary}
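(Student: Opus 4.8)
The plan is to read this off directly from Theorem \ref{theorem:nf_bjnp_nonpath}, using the non-pathological ideal to feed condition (3) and then extracting a density submeasure from condition (2). Let $I$ be a non-pathological ideal on $\omega$. By the definition of non-pathologicality there is a non-pathological lsc submeasure $\psi$ with $I=\Exh(\psi)$, and (as noted after the definition) we may take $\psi$ finite. Set $F=I^{*}=\Exh(\psi)^{*}$, the dual filter; since $F$ is free and $F\sub\Exh(\psi)^{*}$ holds trivially (with equality), the filter $F$ satisfies condition (3) of Theorem \ref{theorem:nf_bjnp_nonpath} with witnessing submeasure $\psi$.

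Next I would invoke the implication (3)$\Rightarrow$(2) of that theorem. This yields a density submeasure $\varphi$ on $\omega$ such that $F\sub\Exh(\varphi)^{*}$. The whole substance of the argument is hidden in this step, which is the only non-routine part: it runs through the equivalence with the BJNP of $N_F$, extracting a density submeasure from a merely non-pathological one. Everything else is formal.

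It remains to dualize. Recall that for filters $F\sub G$ one has $F^{*}\sub G^{*}$, since $A\in F^{*}$ means $A^{c}\in F\sub G$, whence $A\in G^{*}$. Applying this to $F\sub\Exh(\varphi)^{*}$ gives
\[
F^{*}\sub\big(\Exh(\varphi)^{*}\big)^{*}=\Exh(\varphi),
\]
using $(S^{*})^{*}=S$ for $S=\Exh(\varphi)$. Finally $F^{*}=(I^{*})^{*}=I$, so $I\sub\Exh(\varphi)$, and $\Exh(\varphi)$ is a density ideal because $\varphi$ is a density submeasure. Thus every non-pathological ideal $I$ is contained in the density ideal $\Exh(\varphi)$, as required. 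I do not anticipate any genuine obstacle beyond keeping the dual-family manipulations straight; the real work was already carried out in proving Theorem \ref{theorem:nf_bjnp_nonpath}.
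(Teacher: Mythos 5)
Your proof is correct and is exactly the derivation the paper intends: the corollary is stated with no written proof as an immediate consequence of Theorem \ref{theorem:nf_bjnp_nonpath}, obtained precisely by feeding $F=I^{*}=\Exh(\psi)^{*}$ into condition (3), extracting a density submeasure from condition (2), and dualizing. The dual-family manipulations you carry out are the only details the paper leaves implicit, and you have them right.
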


Recall that for every submeasure $\varphi$ on $\omega$ the ideal $\Exh(\varphi)$ is an $\F_{\sd}$ P-ideal (in particular, it is a Borel subset of $\Cantor$). 

\begin{corollary}\label{cor:nf_bjnp_fsd}
Let $F$ be a free filter on $\omega$. If $N_F$ has the BJNP, then there exists an $\F_{\sd}$ P-filter $G$ on $\omega$ such that $F\sub G$ and $N_G$ has the BJNP, too.\noproof
\end{corollary}

Recall that by classical theorems of Talagrand (\cite[Theorem 21]{Tal80}) and Sierpiński (\cite{Sie38}, cf. Bartoszy\'{n}ski \cite[Theorem 1.1]{Bar92}) every ideal of the form $\Exh(\varphi)$ for some lsc submeasure $\varphi$ is meager and of measure zero (as it is a Borel subset of $\Cantor$).
%

\begin{corollary}\label{cor:nf_bjnp_meager_meas0}
Let $F$ be a free filter on $\omega$. If $N_F$ has the BJNP, then $F$ is meager and of measure zero.\noproof
\end{corollary}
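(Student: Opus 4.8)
The plan is to reduce the statement for $F$ to the Borel over-filter produced by Proposition \ref{prop:nf_bjnp_fsd} and then apply the regularity of Borel sets together with the two cited dichotomy theorems. Assuming $N_F$ has the BJNP, Proposition \ref{prop:nf_bjnp_fsd} supplies a filter $G$ with $F\sub G$, with $N_G$ having the BJNP, and with $G$ an $\fsd$ subset of $\Cantor$; in particular $G$ is Borel. Since $F$ is free we have $Fr\sub F\sub G$, so $G$ is free as well, and thus both Talagrand's and Bartoszy\'nski--Sierpi\'nski's theorems apply to it. The entire argument then consists in showing that $G$ is meager and null and transferring these two properties down to the subset $F$ by monotonicity.

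First I would handle meagerness. As a Borel subset of $\Cantor$, $G$ has the Baire property. By the equivalence (1)$\Leftrightarrow$(2) of Theorem \ref{theorem:nonmeager_filters}, a free filter possessing the Baire property cannot be non-meager, so $G$ is meager. Since $F\sub G$ and the meager subsets of $\Cantor$ form a $\sigma$-ideal closed under taking subsets, $F$ is meager too.

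Next I would treat measure zero. Being Borel, $G$ is universally measurable, hence in particular measurable with respect to the standard product measure on $\Cantor$. By Theorem \ref{theorem:sierpinski} every free filter is either of measure zero or non-measurable; as $G$ is measurable, it must be of measure zero. Because $F\sub G$ and null sets are downward closed, $F$ is of measure zero as well.

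The only points demanding any care are the verifications that $G$ is free and that meagerness and nullity descend to subsets, and both are immediate. Consequently there is no genuine obstacle in this corollary: all of the substance is already carried by Proposition \ref{prop:nf_bjnp_fsd}, which upgrades the mere existence of a witnessing sequence to an $\fsd$ (hence Borel, hence regular) over-filter, and by the two classical dichotomy theorems, which then leave measure zero and meagerness as the only options for a free filter that is both measurable and has the Baire property.
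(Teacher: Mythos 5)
Your proof is correct and is exactly the argument the paper intends: the authors state the corollary with no proof precisely because it follows, as you describe, from Proposition \ref{prop:nf_bjnp_fsd} giving a Borel (hence Baire-property and measurable) free over-filter $G$, the Talagrand and Bartoszy\'nski--Sierpi\'nski dichotomies forcing $G$ to be meager and null, and downward closure of meager and null sets. Nothing further is needed.
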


It is well-known that every free ultrafilter on $\omega$ is non-meager.

\begin{corollary}\label{cor:ultrafilter_no_bjnp}
If $F$ is a free ultrafilter on $\omega$, then $N_F$ does not have the BJNP.\noproof
\end{corollary}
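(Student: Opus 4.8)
The plan is to combine Corollary \ref{cor:nf_bjnp_meager_meas0} with the classical fact that every free ultrafilter on $\omega$ is non-meager, the latter being a quick consequence of Talagrand's characterization (Theorem \ref{theorem:nonmeager_filters}). Since Corollary \ref{cor:nf_bjnp_meager_meas0} tells us that any $F$ for which $N_F$ has the BJNP must be meager, the whole statement will follow contrapositively once the non-meagerness of free ultrafilters is in hand.

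First I would verify condition (3) of Theorem \ref{theorem:nonmeager_filters} for a free ultrafilter $F$. Fix a strictly increasing sequence $\seqk{n_k}$ of natural numbers and consider the consecutive intervals $I_k=[n_k,n_{k+1})$ for $k\io$. Put $E=\bigcup_{k\io}I_{2k}$. Since $F$ is an ultrafilter, either $E\in F$ or its complement $E^c\in F$. In the former case $E\cap I_{2k+1}=\emptyset$ for every $k\io$; in the latter case, as $E^c\sub[0,n_0)\cup\bigcup_{k\io}I_{2k+1}$, we have $E^c\cap I_{2k}=\emptyset$ for every $k\io$. Either way we obtain a set $A\in F$ which is disjoint from $I_k$ for infinitely many $k$, i.e.\ $A\cap[n_k,n_{k+1})=\emptyset$ for infinitely many $k\io$. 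As the sequence $\seqk{n_k}$ was arbitrary, $F$ satisfies condition (3) of Theorem \ref{theorem:nonmeager_filters}, and is therefore non-meager.

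It then remains only to apply Corollary \ref{cor:nf_bjnp_meager_meas0}: were $N_F$ to have the BJNP, the filter $F$ would be meager, contradicting the previous paragraph. Hence $N_F$ does not have the BJNP.

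I expect no genuine obstacle here. The single point of content is the verification of Talagrand's criterion, and even there the ultrafilter hypothesis does all the work, since it guarantees that one of the two alternating unions of the intervals $I_k$ is already a member of $F$, and that union automatically misses every interval of the opposite parity.
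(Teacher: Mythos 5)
Your proof is correct and follows exactly the route the paper intends: the corollary is stated with no proof immediately after the remark that free ultrafilters are non-meager, so it is meant to follow from Corollary \ref{cor:nf_bjnp_meager_meas0} precisely as you argue. Your verification of Talagrand's condition (3) for an ultrafilter via the alternating union of intervals is a correct filling-in of the ``well-known'' fact the paper takes for granted.
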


Note that the last corollary also follows from Propositions \ref{prop:nf_bjnp_sf_jnp} and \ref{prop:sf_bo_ultrafilter} and Lemma \ref{lemma:basic_jnp}.(2). 

\medskip

\begin{example}\label{example:idealZ}
A prototypical ideal for the class of density ideals is \textit{the (asymptotic) density ideal} $\zZ=\Exh\big(\varphi_d\big)$, where \textit{the asymptotic density submeasure $\varphi_d$} is defined for every $A\in\wo$ as follows:
\[\varphi_d(A)=\sup_{n\io}\frac{\big|A\cap\big[2^n,2^{n+1}\big)\big|}{2^n}.\]One can show that for every $A\in\wo$ the following equivalence holds:
\[A\in\Exh\big(\varphi_d\big)\quad\Longleftrightarrow\quad\limsup_{n\to\infty}\frac{\big|A\cap[0,n)\big|}{n}=0.\]
Set $F_d=\zZ^*$. The BJNP and JNP of the spaces $N_{F_d}$ and $S_{F_d}$ were already studied in \cite[Section 4]{BKS1}, \cite[Example 4.2]{KMSZ}, and \cite[Section 6.1]{MSZ}, where it was, among other things, proven that:
\begin{itemize}
	\item $S_{F_d}$ has no non-trivial convergent sequences but contains a copy of $\bo$;
	\item $S_{F_d}$ has the JNP and there is a JN-sequence $\seqn{\mu_n}$ on $S_{F_d}$ such that $\supp\big(\mu_n\big)\sub N_{F_d}$ for every $n\io$;
	\item every JN-sequence $\seqn{\mu_n}$ on $S_{F_d}$ satisfies the equality $\lim_{n\to\infty}\big|\supp\big(\mu_n\big)\big|=\infty$;
	\item $N_{F_d}$ has the BJNP but not the JNP.
\end{itemize}
Note that most of the above facts may be easily deduced from more general results presented in this paper. 
 In particular, the last fact follows from Theorem \ref{theorem:nf_bjnp_nonpath} (or Corollary \ref{cor:density_bjnp}) and Proposition \ref{prop:nf_jnp_exh_tall}.
\end{example}

Hern\'{a}ndez-Hern\'{a}ndez and Hru\v{s}\'{a}k (\cite[Proposition 3.6]{HHH07}) observed that every density ideal is Kat\v{e}tov below $\zZ$. This, together with Theorem \ref{theorem:nf_bjnp_nonpath}, yields the following characterization of spaces $N_F$ with the BJNP, analogous to Theorem \ref{theorem:main_theorem_JNP}.

\begin{theorem}\label{theorem:nf_bjnp_z}
Let $F$ be a free filter on $\omega$. Then, the space $N_F$ has the BJNP if and only if $F\le_K\zZ^*$.
\end{theorem}
\begin{proof}
If $N_F$ has the BJNP, then by Theorem \ref{theorem:nf_bjnp_nonpath} there exists a density ideal $I$ such that $F\sub I^*$, so in particular $F\le_K I^*$. By \cite[Proposition 3.6]{HHH07} $I^*\le_K\zZ^*$, hence $F\le_K\zZ^*$.

If on the other hand $F\le_K\zZ^*$, then by Proposition \ref{prop:rk_bjnp}.(2) $N_F$ has the BJNP, since by Corollary \ref{cor:density_bjnp} $N_{\zZ^*}$ does.
\end{proof}


\subsection{Summable ideals and large families of non-homeomorphic spaces $N_F$\label{sec:summable}}

Yet another class of non-pathological ideals is constituted by summable ideals. Recall that an ideal $I$ on $\omega$ is \textit{summable} if there exists a  function $f\colon\omega\to[0,\infty)$ such that for the non-pathological lsc submeasure $\varphi_f$ defined, for every $A\in\wo$, by the formula:
\[\varphi_f(A)=\sum_{n\in A}f(n),\]
we have $I=\Exh\big(\varphi_f\big)$. 
Notice that for every $A\in\wo$ it holds:
\[A\in\Exh\big(\varphi_f\big)\quad\Longleftrightarrow\quad\sum_{n\in A}f(n)<\infty,\]
so $\Exh\big(\varphi_f\big)=\Fin\big(\varphi_f\big)$. In particular, every summable ideal is an $\fs$ P-ideal.

The following corollary follows immediately from Theorem \ref{theorem:nf_bjnp_nonpath} and the fact that summable ideals are non-pathological.

\begin{corollary}\label{cor:summable_bjnp}
If $I$ is a summable ideal, then $N_{I^*}$ has the BJNP.\noproof
\end{corollary}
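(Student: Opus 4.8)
The statement to prove is Corollary \ref{cor:summable_bjnp}: if $I$ is a summable ideal, then $N_{I^*}$ has the BJNP.

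\medskip

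The plan is to apply Theorem \ref{theorem:nf_bjnp_nonpath} directly, since that theorem reduces the BJNP of $N_F$ to the existence of a non-pathological lsc submeasure $\varphi$ with $F\sub\Exh(\varphi)^*$. First I would observe that a summable ideal $I$ comes, by definition, equipped with a function $f\colon\omega\to[0,\infty)$ and the associated submeasure $\varphi_f(A)=\sum_{n\in A}f(n)$, for which $I=\Exh(\varphi_f)$. The only thing to verify is that $\varphi_f$ is a non-pathological lsc submeasure; once this is established, the filter $F=I^*$ satisfies $F\sub\Exh(\varphi_f)^*$ trivially (indeed with equality), so condition (3) of Theorem \ref{theorem:nf_bjnp_nonpath} holds and condition (1) follows, giving exactly that $N_{I^*}$ has the BJNP.

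\medskip

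The verification of non-pathology is the only substantive step, and it is genuinely easy. For each singleton $\{n\}$ the non-negative measure $\mu_n=f(n)\cdot\delta_n$ on $\omega$ satisfies $\mu_n\le\varphi_f$, since for any $A\in\wo$ we have $\mu_n(A)=f(n)\cdot\chi_A(n)\le\sum_{m\in A}f(m)=\varphi_f(A)$. Given $A\in\wo$, summing finitely many of these point masses supported inside $A$ recovers arbitrarily large partial sums $\sum_{n\in A, n\le k}f(n)$, which converge to $\varphi_f(A)$; hence the supremum in the definition of non-pathology is attained (as a sup over dominated non-negative measures), i.e.\ $\varphi_f(A)=\sup\{\mu(A):\mu\le\varphi_f,\ \mu\ge0\}$. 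Lower semi-continuity is immediate from $\varphi_f(A)=\lim_{k\to\infty}\sum_{n\in A\cap[0,k]}f(n)$, and the submeasure axioms ($\varphi_f(\emptyset)=0$, finiteness on singletons, monotonicity, subadditivity) are obvious from the defining sum.

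\medskip

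In fact the excerpt already records, in the paragraph defining summable ideals just before the corollary, that $\varphi_f$ is a non-pathological lsc submeasure and that every summable ideal is an $\fs$ P-ideal. So no real obstacle remains: the corollary is a one-line deduction. I would phrase the proof as the single observation that, $I=\Exh(\varphi_f)$ for the non-pathological lsc submeasure $\varphi_f$, whence $I^*=\Exh(\varphi_f)^*$ and condition (3) of Theorem \ref{theorem:nf_bjnp_nonpath} is satisfied with $F=I^*$; Theorem \ref{theorem:nf_bjnp_nonpath} then yields that $N_{I^*}$ has the BJNP. The main (and only) care needed is simply to invoke the already-established non-pathology of $\varphi_f$ rather than reproving it, keeping the proof appropriately short.
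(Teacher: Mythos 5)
Your proposal is correct and follows exactly the route the paper intends: the corollary is stated with no proof precisely because it "follows immediately from the fact that summable ideals are non-pathological," i.e.\ $I=\Exh(\varphi_f)$ for the non-pathological lsc submeasure $\varphi_f$, so condition (3) of Theorem \ref{theorem:nf_bjnp_nonpath} applies with $F=I^*$. Your extra verification that $\varphi_f$ is non-pathological (via the point masses $f(n)\cdot\delta_n$, or more directly the restricted measure $B\mapsto\sum_{n\in B\cap A}f(n)$) is sound but, as you note, already recorded in the paper's definition of summable ideals.
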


Guzm\'{a}n-Gonz\'{a}lez and Meza-Alc\'{a}ntara \cite[Theorem 1]{GGMA16} proved that there exists an order-preserving embedding $\Phi$ of the poset $(\wo/Fin,\subseteq^*)$ into the set $\Sigma$ of all summable ideals endowed with the Kat\v{e}tov preordering $\le_K$. Consequently, summable ideals can be used to construct rich families of pairwise non-isomorphic filters $F$ on $\omega$ whose spaces $N_F$ have the JNP or the BJNP but not the JNP, that is, they can be used to justify Corollary \ref{cor:main_corollary_continuum} from Introduction.

\begin{proof}[\textbf{Proof of Corollary \ref{cor:main_corollary_continuum}}]
Let $\mathfrak{F}$ be a family of size $\frakc$ of pairwise $\subseteq^*$-incomparable elements of $\wo/Fin$. Set $\fF_2=\big\{I^*\colon\ I\in\Phi[\mathfrak{F}]\big\}$, where $\Phi$ is the above-mentioned order-preserving embedding of $\wo/Fin$ into $\Sigma$. Corollary \ref{cor:summable_bjnp} implies that for each $F\in\fF_2$ the space $N_F$ has the BJNP but, by Theorem \ref{theorem:main_theorem_JNP} or Proposition \ref{prop:nf_jnp_exh_tall}, it does not have the JNP. Condition (B) is thus satisfied.

We put  $\fF_1 = \big\{F\oplus Fr\colon F\in \fF_2\big\}$ (cf. Section \ref{sec:free_sums}).
It is clear that each $G\in \fF_1$ is an $\fs$ P-filter such that  $N_G$ has the JNP (Proposition \ref{prop:product_filters}). By Lemma \ref{lemma:fr_f1_f2_iso}, no two members of $\fF_1$ are isomorphic.
\end{proof}

\begin{remark}
Of course, any two distinct filters from the family $\fF_2$ are even Kat\v{e}tov incomparable. Since $F\equiv_K Fr$ for any $F\in\fF_1$, any two filters from $\fF_1$ are Kat\v{e}tov comparable (even equivalent).
\end{remark}

Dropping the assumption concerning the descriptive complexity of filters in Corollary \ref{cor:main_corollary_continuum}, we may obtain families of cardinality $2^{\frakc}$ containing pairwise non-isomorphic filters on $\omega$ whose spaces $N_F$ have the BJNP but not the JNP, etc., hence verifying Corollary \ref{cor:main_corollary_2continuum} from Introduction.

\begin{proof}[\textbf{Proof of Corollary \ref{cor:main_corollary_2continuum}}]
The corollary can be justified using the well-known fact that the Kat\v{e}tov equivalence class of any ideal contains $2^\frakc$ many pairwise non-isomorphic filters. For the reader's convenience we present below yet another elementary argument.

Let $\mathcal{G}_5$  be the family of all ultrafilters on $\omega$ and let $F$ be a fixed element of the family $\fF_2$ from Corollary \ref{cor:main_corollary_continuum}. We put:
\[\mathcal{G}_3= \big\{G\oplus Fr\colon G\in \mathcal{G}_5\big\}\quad\text{ and}\quad\mathcal{G}_4 = \big\{G\oplus F\colon G\in \mathcal{G}_5\big\}.\]
Using Corollary \ref{cor:ultrafilter_no_bjnp} and Proposition \ref{prop:product_filters}, one can easily check that, for any $i=3,4,5$ and $H\in \mathcal{G}_i$, the space $N_H$ has the property of members of $\fF_i$ declared in conditions (a)--(c). Since $|\mathcal{G}_5| = 2^{\frakc}$ and each family of pairwise isomorphic filters has the cardinality bounded by the continuum, for each $i=3,4,5$ we can select a subfamily $\fF_i\subseteq \mathcal{G}_i$ consisting of $2^{\frakc}$ many pairwise non-isomorphic filters.
\end{proof}

\begin{remark}\label{remark:two_to_cont_many_nonhomeo}
Proposition \ref{prop:ordering_nf}.(2) implies that for $i=1,\dots,5$ and each pair of distinct filters $F,G$ belonging to the family $\fF_i$ from Corollaries \ref{cor:main_corollary_continuum} and \ref{cor:main_corollary_2continuum} their spaces $N_F$ and $N_G$ are not homeomorphic.
\end{remark}

\subsection{An $\fs$ P-filter $F$ such that $N_F$ does not have the BJNP\label{sec:fs_no_bjnp}}

In Theorem \ref{theorem:nf_bjnp_nonpath} we proved that any free filter $F$ on $\omega$ whose space $N_F$ has the BJNP is contained in the filter dual to some non-pathological ideal, in particular, in some $\fsd$ P-filter (cf. also Corollary \ref{cor:nf_bjnp_fsd}). Note that not every $\fsd$-filter or even $\fs$-filter has this property. Namely, Solecki \cite{Sol00} constructed an $\fs$-ideal which cannot be extended to a non-pathological ideal. Solecki's ideal is not a P-ideal, but Filip\'ow and Szuca \cite[Example 3.6]{FS10} found an example of an $\fs$ P-ideal which cannot be covered by a summable ideal and recently it was proved by Filip\'ow and Tryba \cite[Theorem 4.12]{FT19} that the same ideal cannot be covered by a non-pathological ideal. 

Using different tools than those in \cite{FS10} and \cite{FT19}, we present below yet another example of an $\fs$ P-ideal which cannot be covered by a non-pathological ideal. Note that this construction is related to the earlier results of Mazur \cite{Maz91} who constructed an $\fs$ ideal which is not contained in any summable ideal and of Farah \cite[Sections 1.9 and 1.11]{Far00} who provided an example of an $\fs$ P-ideal which is pathological (and hence not summable)\footnote{It must be noted that the general idea standing behind the constructions from \cite{Maz91}, \cite{Far00}, and \cite{FS10} as well as from our Example \ref{example: Fsigma, nonBJNP} is quite similar.}.

In our argument we will use the following result of Herer and Christensen \cite[Theorem 1]{HCh}.

\begin{theorem}[Herer--Christensen]\label{thm_HCh}
	For each $\varepsilon> 0$, there exist a finite set $X$ and a submeasure $s$ defined on the algebra $\wp(X)$ such that $s(X)=1$ and any non-negative measure $\mu$ defined on $\wp(X)$ and dominated by $s$ (i.e., $\mu\le s$) satisfies $\mu(X)<\varepsilon$.\noproof
\end{theorem}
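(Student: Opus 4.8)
The plan is to realize $s$ as a \emph{covering submeasure} generated by a cleverly chosen family of subsets of a large finite set, so that the gap between $s$ and the measures it dominates arises from the inefficiency of covering. Fix a large integer $n$, put $X=\{1,\dots,n\}$, and let $\gG=\{G_1,\dots,G_m\}$ be a family of subsets of $X$ whose union is $X$. Let $N$ be the least number of members of $\gG$ covering $X$, and define
\[ s(A)=\min\Big\{k/N\colon\ A\subseteq G_{i_1}\cup\cdots\cup G_{i_k},\ G_{i_j}\in\gG\Big\}. \]
It is routine to check that $s$ is a submeasure on $\wp(X)$ (monotonicity and subadditivity follow by concatenating covers) and that $s(X)=N/N=1$ by the choice of $N$. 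Moreover $s(G_i)\le 1/N$ for every $i$, since $G_i$ is covered by itself; hence any non-negative measure $\mu\le s$ satisfies $\mu(G_i)\le 1/N$ for all $i$.

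The decisive step is an averaging inequality (morally, the LP dual of ``$\max\{\mu(X)\colon\mu\le s\}$''). Writing $a_x=\mu(\{x\})$ and $d_x=|\{i\colon x\in G_i\}|$ for the number of generators through $x$, additivity of $\mu$ gives
\[ \sum_{i=1}^m\mu(G_i)=\sum_{i=1}^m\sum_{x\in G_i}a_x=\sum_{x\in X}a_x\,d_x\ge d_{\min}\sum_{x\in X}a_x=d_{\min}\,\mu(X), \]
where $d_{\min}=\min_x d_x$. Combining with $\mu(G_i)\le 1/N$ yields $\mu(X)\le (m/N)/d_{\min}$. Hence it suffices to build a family $\gG$ that is simultaneously almost regular (so that $d_{\min}$ is close to its average) and highly inefficient to cover (so that $N$ is large), arranged so that $m/(N\,d_{\min})<\varepsilon$. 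Since covering $X$ trivially requires at least $n/\max_i|G_i|$ generators, what we really need is a family whose covering number $N$ exceeds this trivial bound by a factor larger than $1/\varepsilon$.

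Such families are produced by a probabilistic construction, which I expect to be the main obstacle. Take $m=n$ and let each $G_i$ be a random subset of $X$ obtained by including every point independently with probability $1/2$; then each $d_x$ is distributed as $\mathrm{Bin}(n,1/2)$, so by a Chernoff bound together with a union bound over the $n$ points, with probability tending to $1$ all degrees satisfy $d_x\ge n/4$. For the covering number, a fixed subfamily of $k$ such sets covers $X$ with probability $(1-2^{-k})^n\le e^{-n2^{-k}}$; summing over all $\binom{n}{k}\le n^k$ choices and over $k\le k_0:=\lfloor\tfrac12\log_2 n\rfloor$ gives
\[ \PP\big(N\le k_0\big)\le\sum_{k=1}^{k_0}n^k\,e^{-n2^{-k}}\le k_0\,n^{k_0}\,e^{-\sqrt n}, \]
which tends to $0$ because $\sqrt n$ dominates $(\log_2 n)^2$. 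Thus for $n$ large both events occur for some concrete family, giving $d_{\min}\ge n/4$ and $N>k_0\ge\tfrac12\log_2 n-1$. Feeding these into the averaging inequality gives $\mu(X)\le (n/N)/d_{\min}\le 8/\log_2 n$, which is below $\varepsilon$ once $n>2^{8/\varepsilon}$. The hard part is precisely this lower bound on the covering number: one must certify that \emph{no} small subfamily covers $X$, and the union bound succeeds only because the probability that a few dense random sets already cover everything is doubly exponentially small, comfortably beating the number of subfamilies over which one sums.
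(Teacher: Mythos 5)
Your argument is correct, and it is genuinely different from what the paper does: the paper offers no proof at all, citing Herer--Christensen's original article, whose construction of a pathological submeasure proceeds by an iterative/product construction on finite structures rather than by your direct covering-number argument. Your route --- define $s(A)$ as the (normalized) minimal number of generators from $\gG$ needed to cover $A$, bound $\mu(X)$ by the linear-programming-style averaging inequality $\mu(X)\le m/(N\,d_{\min})$, and then produce a family with small $m/(N\,d_{\min})$ by taking $n$ independent random subsets of density $1/2$ --- is a clean, self-contained alternative; this style of ``covering submeasure'' built from a hypergraph with large covering number but regular degrees is essentially the modern way pathological submeasures are manufactured (e.g.\ in work of Talagrand and in Farah's memoir), and your probabilistic certificates check out: the Chernoff-plus-union bound gives $d_x\ge n/4$ for all $x$, the bound $\PP(N\le k_0)\le\sum_{k\le k_0}\binom{n}{k}(1-2^{-k})^n\le k_0 n^{k_0}e^{-\sqrt n}\to 0$ is valid, and both events therefore occur simultaneously for large $n$, yielding $\mu(X)\le 8/\log_2 n<\varepsilon$ once $n>2^{8/\varepsilon}$. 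Two points are worth making explicit in a final write-up: the event $d_{\min}\ge n/4>0$ is also what guarantees that every subset of $X$ admits a cover by members of $\gG$, so that $s$ is finite (and $s(\emptyset)=0$ via the empty cover); and in the averaging step one should note that a measure on the finite power set $\wp(X)$ is determined by its point masses, so $\sum_i\mu(G_i)=\sum_x a_xd_x$ is legitimate. Neither is a gap, only a matter of presentation.
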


\begin{lemma}\label{HCh_sub}
	Let $\varepsilon> 0$ and let $s$ be a submeasure on a set $X$ given by Theorem \ref{thm_HCh}. Then, for any non-negative finite measure $\mu$ on $\wp(X)$, there exists a set $A\subseteq X$ such that $\mu(A)\ge \mu(X)/2$ and $s(A)\le 2\varepsilon$.
\end{lemma}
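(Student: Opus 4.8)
The plan is to build the set $A$ out of two disjoint pieces, each of submeasure at most $\varepsilon$, so that subadditivity keeps the total submeasure under the required bound $2\varepsilon$. Concretely, I would first dispose of the trivial case $s(X)\le 2\varepsilon$ by taking $A=X$, and otherwise proceed as follows. Normalize $m=\mu(X)>0$. Let $A_1\subseteq X$ be a set of maximal $\mu$-measure among all sets with $s(A_1)\le\varepsilon$, put $Z=X\setminus A_1$, and let $A_2\subseteq Z$ be a set of maximal $\mu$-measure among all subsets of $Z$ with $s(A_2)\le\varepsilon$. Then $A:=A_1\cup A_2$ satisfies $s(A)\le s(A_1)+s(A_2)\le 2\varepsilon$ by subadditivity, so the entire problem reduces to the single \emph{mass inequality} $\mu(A_1)+\mu(A_2)\ge m/2$. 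Note that by construction $\mu(A_1)\ge\mu(A_2)$, and that every subset $C\subseteq Z$ with $s(C)\le\varepsilon$ has $\mu(C)\le\mu(A_2)$.

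To prove the mass inequality I would argue by contradiction: assume $\mu(A_1)+\mu(A_2)<m/2$, so that the leftover region $Z'=X\setminus A$ carries $\mu(Z')>m/2$, while every $\varepsilon$-small subset of $Z'$ has $\mu$-measure at most $\mu(A_2)\le\mu(A_1)<m/2$. Here is where the Herer--Christensen property enters, used positively. The key observation is that the smallness guaranteed by Theorem~\ref{thm_HCh} is inherited by every subset: any non-negative measure dominated by $s$ and supported on a subset $Y\subseteq X$ is a measure dominated by $s$ on all of $X$, hence has total mass $<\varepsilon$. Applying this to $Z'$, the largest measure $\nu$ dominated by $s$ and supported on $Z'$ has $\nu(Z')<\varepsilon$; by maximality of $\nu$ every point of $Z'$ lies in a \emph{tight} set $S$ (one with $\nu(S)=s(S)$), and every tight set automatically satisfies $s(S)=\nu(S)\le\nu(Z')<\varepsilon$. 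Thus $Z'$ is covered by $\varepsilon$-small sets, and moreover a \emph{disjoint} union of tight sets is again tight (since $s(S_1\sqcup S_2)\le s(S_1)+s(S_2)=\nu(S_1)+\nu(S_2)=\nu(S_1\sqcup S_2)\le s(S_1\sqcup S_2)$ forces equality), so such a union still has submeasure $<\varepsilon$. The goal is then to extract from this covering a single $\varepsilon$-small set inside $Z'$ whose $\mu$-mass is a definite fraction of $\mu(Z')>m/2$, which would violate the bound $\mu(C)\le\mu(A_2)$ and yield the contradiction.

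The hard part is exactly this last extraction step: turning the Herer--Christensen statement (no \emph{dominated measure} on $Z'$ has mass $\ge\varepsilon$) into the statement that a \emph{single} $\varepsilon$-small set captures a constant fraction of the arbitrary measure $\mu$. The naive route --- take the LP-dual fractional cover of $Z'$ with small total $s$-cost and average $\mu$ against it --- produces a weighted family of low-submeasure sets of large total $\mu$-weight, but the total cover weight is \emph{a priori} unbounded, so averaging does not isolate one heavy set; equivalently, one cannot simply invoke a max-flow/min-cut or polymatroid duality, because a submeasure is subadditive but \emph{not} submodular. I expect the resolution to run through induction on $|X|$, applying the inductive form of the lemma to the strictly smaller set $Z'$ (legitimate by the inheritance of the Herer--Christensen property), with the two-piece bookkeeping above arranged so that the two $\varepsilon$-budgets combine to exactly $2\varepsilon$ while the mass fraction is maintained at one half. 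Making the constants close --- preserving the factor $\tfrac12$ in the mass at the same time as the budget $2\varepsilon$ in the submeasure --- is the delicate point, and is precisely where the pathology of $s$ at every scale (not just on $X$) must be used.
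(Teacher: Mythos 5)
Your proposal does not close. You reduce everything to an ``extraction step'' --- producing a \emph{single} set $C\subseteq Z'$ with $s(C)\le\varepsilon$ and $\mu(C)$ a definite fraction of $\mu(Z')$ --- and then you explicitly concede that you do not know how to prove it, deferring to an unspecified induction on $|X|$. That step is not a technicality: it is the entire content of the lemma, and the obstruction you yourself identify (the Herer--Christensen theorem controls measures \emph{dominated by} $s$, not the arbitrary measure $\mu$, and a submeasure is subadditive but not submodular, so no covering/duality argument hands you one heavy small set) is real. The structural reason your setup gets stuck is that the predicate ``$s(A)\le\varepsilon$'' is not stable under disjoint unions in any way you can exploit: taking a maximal-$\mu$ set subject to it tells you nothing usable about the complement.

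The paper's proof replaces that absolute bound by a \emph{ratio} condition, which is stable under disjoint unions. With $a=\mu(X)$, the Herer--Christensen property applied to $(\varepsilon/a)\mu$ (total mass exactly $\varepsilon$, hence not dominated by $s$) gives some $B$ with $\mu(B)>(a/\varepsilon)s(B)$, so the family of sets $A$ with $\mu(A)>\bigl(a/(2\varepsilon)\bigr)s(A)$ is nonempty; take $A$ maximal in it. Since $\mu$ is additive and $s$ subadditive, adjoining any $B\subseteq X\setminus A$ with the same ratio property would preserve it, so maximality forces $\mu(B)\le\bigl(a/(2\varepsilon)\bigr)s(B)$ for \emph{every} $B\subseteq X\setminus A$. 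The bound $s(A)\le 2\varepsilon$ is then automatic from $\mu(A)\le a$, and if $b=\mu(A)<a/2$ one checks that $\nu(C)=\bigl(\varepsilon/(a-b)\bigr)\mu(C\setminus A)$ is a measure of total mass $\varepsilon$ dominated by $s$ (because $\varepsilon/(a-b)<2\varepsilon/a$), contradicting Herer--Christensen. In short: the right object to maximize is not the $\mu$-mass among $\varepsilon$-small sets but a set maximal for the union-stable inequality $\mu(A)>\bigl(a/(2\varepsilon)\bigr)s(A)$; that single change makes the complement directly support a dominated measure and dissolves the extraction problem your route runs into.
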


\begin{proof}
	Put $a= \mu(X)$, without loss of generality we can assume that $a> 0$. Observe that by the properties of $s$, the measure $(\varepsilon/a)\mu$ is not dominated by $s$, so there is $B\subseteq X$ such that $\mu(B) > (a/\varepsilon)s(B)$.
	Let $A$ be a maximal with respect to the inclusion subset of $X$ satisfying
	\[\mu(A) > \big(a/(2\varepsilon)\big)s(A).\]
	By the maximality of $A$, for every $B\subseteq X\sm A$, we have:
	\[\label{sub1}\tag{$*$}\mu(B) \le \big(a/(2\varepsilon)\big)s(B).\]
	Let $b=\mu(A)$. We will show that $b\ge a/2$. Suppose to the contrary that $b < a/2$, then $a-b = \mu(X\setminus A) > 0$. Hence, we can consider a measure $\nu$ on $\wp(X)$ defined for every $C\subseteq X$ by the formula
	\[\nu(C) = \big(\varepsilon/(a-b)\big)\mu(C\setminus A).\]
	We have $\nu(X) = \varepsilon$, therefore $\nu$ is not dominated by $s$, so there is a set $B\subseteq X$ such that
	\[\label{sub2}\tag{$**$}\nu(B) > s(B).\]
	Since
	\[\nu(B\sm A)=\nu(B)>s(B)\ge s(B\setminus A),\]
	we can assume that $B\subseteq X\setminus A$. Using the definition of $\nu$ and combining inequalities (\ref{sub1}) and (\ref{sub2}), we obtain:
	\[\big(\varepsilon/(a-b)\big)\cdot\big(a/(2\varepsilon)\big)s(B)\ge \big(\varepsilon/(a-b)\big)\cdot\mu(B) > s(B),\]
	which, after routine simplifications of the outer sides, leads to the inequality $2b > a$, contradicting our assumption on $b$.
	
	It remains to observe that the inequality $\mu(A) > \big(a/(2\varepsilon)\big)s(A)$ together with $a \ge \mu(A)$ gives the desired estimate $s(A)\le 2\varepsilon$.
\end{proof}

\begin{example}\label{example: Fsigma, nonBJNP}
	\textit{There exists a free $\fs$ P-filter $F$ on $\omega$ such that the space $N_F$ does not have the BJNP.}
\end{example}

\begin{proof}
For each $n\io$, let $A_n$ be a finite set and $s_n$ be a submeasure on $\wp(A_n)$ given by Theorem \ref{thm_HCh} applied for $\varepsilon = 2^{-n}$. Without loss of generality we can assume that the sets $A_n$ are pairwise disjoint subsets of $\omega$ and that $\omega=\bigcup_{n\io}A_n$. For every $A\in\wo$ set:
\[\varphi(A)=\sum_{n\io}s_n\big(A_n\cap A\big),\]
and notice that $\varphi$ is a lsc submeasure on $\omega$ such that $\Fin(\varphi)=\Exh(\varphi)$. Put $I=\Fin(\varphi)$ and $F=I^*$. Of course, $F$ is a free $\fs$ P-filter. 
	
We will show that the space $N_F$ does not have the BJNP. So assume that it has the BJNP and let $\seqk{\mu_k}$ be a disjointly supported sequence of finitely supported probability measures on $N_F$ as in Theorem \ref{theorem:main_theorem_BJNP_prob}. For each $k\io$, set $B_k = \supp\big(\mu_k\big)$, the sets $B_k$ are pairwise disjoint. Passing to a subsequence of $\seqk{\mu_k}$, if necessary, we can also require that each set $A_n$ intersects at most one set $B_k$. Let thus $P$ be a set of all those $n\io$ for which there exists (a unique) $k(n)\io$ such that $A_n\cap B_{k(n)}\ne \emptyset$. For every $n\in P$, put $C_n = A_n\cap B_{k(n)}$.

Applying Lemma \ref{HCh_sub} for the submeasure $s_n$ and the measure $\mu_{k(n)}\rstr C_{n}$, $n\in P$, we can find a subset $D_{n}$ of $C_{n}$ such that 
\[\mu_{k(n)}\big(D_{n}\big)\ge \mu_{k(n)}\big(C_{n}\big)/2\]
and
\[s_n\big(D_n\big)\le 2\cdot 2^{-n} =  2^{-n+1}.\]

Let $D = \bigcup_{n\in P} D_n$. For each $k\io$, put $F_k=\big\{n\in P\colon\ k=k(n)\big\}$ and note that $B_k=\bigcup\big\{C_n\colon n\in F_k\big\}$. From the  inequality $s_n(D_n)\le 2^{-n+1}$ for $n\in P$, we conclude that $D$ belongs to the ideal $I$. On the other hand, for each $k\io$, we have:
\[\mu_{k}(D) = \sum_{n\in F_k}\mu_k\big(D_n\big)\ge\sum_{n\in F_k}\mu_k\big(C_n\big)/2=(1/2)\mu_k\big(B_k\big) = 1/2,\]
a contradiction with the condition that $\lim_{k\to\infty}\mu_k(D)=0$.
\end{proof}

We do not know if one can construct a family of size continuum consisting of pairwise non-isomorphic $\fs$ P-filters $F$ such that the space $N_F$  does not have the BJNP (cf. Corollary \ref{cor:main_corollary_continuum}).


\subsection{Spaces $S_F$ without BJN-sequences on $N_F$ but with convergent sequences in $S_F^*$}

As an application of Theorem \ref{theorem:nf_bjnp_z}, in this short final part of Section \ref{sec:complexity} we observe that, for every sequence $\seqn{F_n}$ of free filters based on some sequence $\seqn{A_n}$, the space $N_{\sum F_n}$ does not have the BJNP.

Recall that the Fubini product $Fr\otimes Fr$ is the free filter on $\omega\times\omega$ given by the formula:
\[Fr\otimes Fr=\Big\{A\sub\omega\times\omega\colon \big\{n\io\colon \big\{k\io\colon (n,k)\in A\big\}\in Fr\big\}\in Fr\Big\}.\]
As in the case of free sums, we can think of $Fr\otimes Fr$ as of a free filter on $\omega$.

Fix a sequence $\seqn{F_n}$ of free filters based on some sequence $\seqn{A_n}$. It is immediate that $Fr\otimes Fr\le_K\sum F_n$. However, as observed in \cite[page 7]{Hru17}, no $\fsd$ filter on $\omega$ is Kat\v{e}tov above $Fr\otimes Fr$. In particular, we have $Fr\otimes Fr\not\le_K\zZ^*$, and so, by Proposition \ref{prop:rk_bjnp} and Theorem  \ref{theorem:nf_bjnp_z}, the space $N_{\sum F_n}$ does not have the BJNP. On the other hand, Lemma \ref{lemma:sf_f_convseq}.(2) implies that $S_{\sum F_n}^*$ contains a non-trivial convergent sequence. We thus get the following result.

\begin{corollary}\label{cor:sf_jnp_nf_no_bjnp}
For every sequence $\seqn{F_n}$ of free filters based on some sequence $\seqn{A_n}$, the space $S_{\sum F_n}^*$ contains a non-trivial convergent sequence (in particular, the space $S_{\sum F_n}$ has the JNP) but the space $N_{\sum F_n}$ does not have the BJNP.
\noproof
\end{corollary}

Since $Fr\otimes Fr$ can be treated as a filter of the form $\sum F_n$ for some sequence $\seqn{F_n}$ of free (Fr\'echet) filters, the above corollary also applies to it. This way, as $Fr\otimes Fr$ is an $\mathbb{F}_{\sigma\delta\sigma}$ set, it is an example of a Borel filter $F$ such that $S_F^*$ contains a non-trivial convergent sequence but the space $N_F$ does not have the BJNP.

Corollary \ref{cor:sf_jnp_nf_no_bjnp} implies that in order to study which spaces among those of the form $S_F^*$ have the JNP, we need to apply completely different techniques and tools than we described in this section.
 
\section{General Tychonoff spaces with the JNP and the BJNP \label{sec:tychonoff}}

Let $X$ be a space and assume that $\seqn{x_n}$ is a non-trivial sequence in $X$ convergent to some point $p\in X\sm\big\{x_n\colon n\io\big\}$. By Lemma \ref{lemma:basic_jnp}.(1), $X$ has the JNP. This consequence can also be seen from a more general point of view. Namely, put $Z=\big\{x_n\colon\ n\io\big\}\cup\{p\}$ and endow it with the subspace topology. Obviously, in $Z$, the sequence $\seqn{x_n}$ is still convergent to $p$. 
For each open neighborhood $U$ of $p$ in $Z$, let $A(U)=\big\{n\io\colon\ x_n\in U\big\}$, and set $F=\big\{A(U)\colon U\text{ is an open neighborhood of }p\text{ in }Z\big\}$. It is immediate that $F=Fr$ and that the space $N_{Fr}$ is homeomorphic to $Z$. Hence, $X$ contains a subspace homeomorphic to some space $N_G$, where $G$ is a free filter on $\omega$, which has the JNP, and hence $X$ itself must have the JNP. In this section we study a counterpart of this situation for the property BJNP. This, together with results from the previous sections, will allow us to immediately obtain various sufficient conditions for $X$ to have the BJNP or the JNP.

We need to introduce a piece of notation. Fix a space $X$, its infinite countable subset $Y$, and a point $x\in\ol{Y}\sm Y$. Set $Z_X(x,Y)=Y\cup\{x\}$ and endow it with the subspace topology. By $\mathfrak{N}_X(x)$ we denote the neighborhood system of $x$ in $X$, that is, the collection of all (not necessarily open) subsets $U$ of $X$ such that $x\in\intt U$. We then put:
\[\mathfrak{F}_X(x,Y)=\big\{U\cap Y\colon U\in\mathfrak{N}_X(x)\big\}.\]
Note that since $x\not\in Y$ and $Y$ is infinite countable, $\mathfrak{F}_X(x,Y)$ is a free filter on the \textit{set} $Y$. If $f\colon\omega\to Y$ is a bijection, then $F=\big\{f^{-1}[V]\colon\ V\in\mathfrak{F}_X(x,Y)\big\}$ is a free filter on $\omega$---we will say in this case that $F$ is $f$-\textit{associated} (or, shortly, \textit{associated}, if $f$ is not important) \textit{to} $\mathfrak{F}_X(x,Y)$. The bijection $f$ gives rise to the bijective continuous function $\varphi_f\colon N_F\to Z_X(x,Y)$ such that $\varphi_f\rstr\omega=f$ and $\varphi_f\big(p_F\big)=x$. 

\begin{theorem}\label{theorem:tychonoff_x_bjnp}
Suppose $X$ is a space. Let $Y$ be its countable subset and $x\in\ol{Y}\sm Y$. Let $f\colon\omega\to Y$ be a bijection and $F$ a free filter on $\omega$ $f$-associated to $\mathfrak{F}_X(x,Y)$.

Then, the space $N_F$ has the BJNP if and only if $X$ admits a BJN-sequence $\seqn{\mu_n}$ such that $\supp\big(\mu_n\big)\sub Z_X(x,Y)$ for every $n\io$ and $\lim_{n\to\infty}\big\|\mu_n\rstr(Y\sm V)\|=0$ for every $V\in\mathfrak{F}_X(x,Y)$. 

In particular, if $N_F$ has the BJNP, then $X$ has the BJNP.
\end{theorem}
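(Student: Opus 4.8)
The plan is to transport finitely supported measures between $N_F$ and $X$ along the continuous bijection $\varphi_f\colon N_F\to Z_X(x,Y)$, keeping in mind that $\varphi_f[N_F]=Y\cup\{x\}=Z_X(x,Y)$ but that $\varphi_f$ need not be a homeomorphism (points of $Y$ may fail to be isolated in $Z_X(x,Y)$, whereas every point of $\omega$ is isolated in $N_F$). The central tool is the combinatorial description of BJN-sequences on $N_F$ given in Theorem \ref{prop:nf_bjnseq_char}, whose three conditions refer only to the filter $F$ and to the point-masses of the measures involved --- data preserved under a bijective push-forward. I would also use repeatedly that, by the very definition of $F$, a set $A\sub\omega$ lies in $F$ if and only if $A=f^{-1}[V]$ for some $V\in\mathfrak{F}_X(x,Y)$, and that then $\varphi_f^{-1}[Y\sm V]=\omega\sm A$.

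\textbf{Forward direction.} Assume $N_F$ has the BJNP and fix any BJN-sequence $\seqn{\theta_n}$ on $N_F$. Push each $\theta_n$ forward to the finitely supported signed measure $\mu_n$ on $X$ determined by $\mu_n(\{\varphi_f(y)\})=\theta_n(\{y\})$ for $y\in N_F$. Since $\varphi_f$ is injective, $\|\mu_n\|=\|\theta_n\|=1$ and $\supp(\mu_n)=\varphi_f[\supp(\theta_n)]\sub Z_X(x,Y)$. For $g\in C^*(X)$ we have $g\circ\varphi_f\in C^*(N_F)$ by continuity of $\varphi_f$, hence $\mu_n(g)=\theta_n(g\circ\varphi_f)\to0$; thus $\seqn{\mu_n}$ is a BJN-sequence on $X$, which already yields the final ``in particular'' assertion. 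For the decay condition, fix $V\in\mathfrak{F}_X(x,Y)$ and set $A=f^{-1}[V]\in F$; then $\varphi_f^{-1}[Y\sm V]=\omega\sm A$, so $\|\mu_n\rstr(Y\sm V)\|=\|\theta_n\rstr(\omega\sm A)\|\to0$ by condition (3) of Theorem \ref{prop:nf_bjnseq_char} applied to $\seqn{\theta_n}$.

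\textbf{Backward direction.} Assume $X$ carries a BJN-sequence $\seqn{\mu_n}$ with $\supp(\mu_n)\sub Z_X(x,Y)$ and the stated decay property, and pull it back along the bijection $\varphi_f$ to the finitely supported measure $\theta_n$ on $N_F$ given by $\theta_n(\{y\})=\mu_n(\{\varphi_f(y)\})$. Since continuity of $\varphi_f^{-1}$ is unavailable, I would verify the three conditions of Theorem \ref{prop:nf_bjnseq_char} directly rather than compose with continuous test functions. Condition (1) is immediate, as $\|\theta_n\|=\|\mu_n\|=1$. For condition (2), note $\theta_n(N_F)=\mu_n(Z_X(x,Y))=\mu_n(X)=\mu_n(\mathbf{1})\to0$ because the constant function $\mathbf{1}$ lies in $C^*(X)$ and $\seqn{\mu_n}$ is a BJN-sequence; combined with $\|\theta_n\|=1$, the identities $\|\theta_n\rstr P_n\|+\|\theta_n\rstr N_n\|=1$ and $\|\theta_n\rstr P_n\|-\|\theta_n\rstr N_n\|=\theta_n(N_F)$ force $\|\theta_n\rstr P_n\|,\|\theta_n\rstr N_n\|\to1/2$. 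For condition (3), given $A\in F$ write $A=f^{-1}[V]$ with $V\in\mathfrak{F}_X(x,Y)$; then $\|\theta_n\rstr(\omega\sm A)\|=\|\mu_n\rstr(Y\sm V)\|\to0$ by hypothesis. Theorem \ref{prop:nf_bjnseq_char} then certifies that $\seqn{\theta_n}$ is a BJN-sequence on $N_F$, so $N_F$ has the BJNP.

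The main obstacle --- and the reason this is not a one-line transfer across a homeomorphism --- is precisely that $\varphi_f$ is only a continuous bijection, so in the backward direction one cannot carry continuous test functions from $X$ back to $N_F$. This is circumvented by arguing through the mass-based characterization of Theorem \ref{prop:nf_bjnseq_char} instead of the defining property of BJN-sequences, since each of its three conditions is invariant under relabelling the atoms of a finitely supported measure by the bijection $\varphi_f$.
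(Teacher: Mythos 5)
Your proof is correct and follows essentially the same route as the paper's: push the measures forward along $\varphi_f$ and test against $g\circ\varphi_f$ for the forward direction, and pull them back and verify the conditions of Theorem \ref{prop:nf_bjnseq_char} for the converse. The only cosmetic difference is that you spell out the argument for condition (2) via the identities $\|\theta_n\rstr P_n\|\pm\|\theta_n\rstr N_n\|$, where the paper cites \cite[Lemma 4.2]{KSZ}.
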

\begin{proof}

Assume first that $N_F$ has the BJNP. Let $\seqn{\nu_n}$ be a BJN-sequence of measures on $N_F$. 
For each $n\io$ define the measure $\mu_n$ on $X$ by the formula
\[\mu_n(B)=\nu_n\big(\varphi_f^{-1}\big[B\cap Z_X(x,Y)\big]\big),\]
where $B$ is a Borel subset of $X$; it follows that $\big\|\mu_n\big\|=1$ and that $\supp\big(\mu_n\big)=\varphi_f\big[\supp\big(\nu_n\big)\big]$ is finite and contained in $Z_X(x,Y)$. 

Since $\varphi_f$ is continuous, given any $g\in C_p^*(X)$, we get that $g\circ\varphi_f\in C_p^*\big(N_F\big)$, hence $\mu_n(g)=\nu_n\big(g\circ \varphi_f\big)\to 0$ as $n\to\infty$. Consequently, $\seqn{\mu_n}$ is a BJN-sequence on $X$ and so $X$ has the BJNP. Finally, if $V\in\mathfrak{F}_X(x,Y)$, then $f^{-1}[V]\in F$, so by Proposition \ref{prop:nf_bjnseq_char}.(3) it holds:
\[\lim_{n\to\infty}\big\|\mu_n\rstr(Y\sm V)\big\|=\lim_{n\to\infty}\big\|\nu_n\rstr\big(\omega\sm f^{-1}[V]\big)\big\|=0.\]

Assume now that $X$ admits a BJN-sequence $\seqn{\mu_n}$ of measures such that $\supp\big(\mu_n\big)\sub Z_X(x,Y)$ for every $n\io$ and $\lim_{n\to\infty}\big\|\mu_n\rstr(Y\sm V)\|=0$ for every $V\in\mathfrak{F}_X(x,Y)$. For each $n\io$ and $z\in N_F$, set $\nu_n(\{z\})=\mu_n\big(\varphi_f(\{z\})\big)$. This defines a finitely supported measure on $N_F$ such that $\big\|\nu_n\big\|=1$. Also, for every $A\in F$, we have:
\[\lim_{n\to\infty}\big\|\nu_n\rstr(\omega\sm A)\big\|=\lim_{n\to\infty}\big\|\mu_n\rstr\big(Y\sm f[A]\big)\big\|=0,\]
since $f[A]\in\mathfrak{F}_X(x,Y)$. Put $P_n=\big\{x\in\supp\big(\nu_n\big)\colon \nu_n(\{x\})>0\big\}$ and $N_n=\supp\big(\nu_n\big)\sm P_n$. Since $\seqn{\mu_n}$ is a BJN-sequence, we get (cf. \cite[Lemma 3.1]{MSZ}):
\[\lim_{n\to\infty}\big\|\nu_n\rstr P_n\big\|=\lim_{n\to\infty}\big\|\nu_n\rstr N_n\big\|=1/2.\]
Appealing to Proposition \ref{prop:nf_bjnseq_char}, we learn that $\seqn{\nu_n}$ is a BJN-sequence on $N_F$ and hence $N_F$ has the BJNP. 
%
\end{proof}

Let us note that, without loss of generality, we may require that the sequence $\seqn{\mu_n}$ in Theorem \ref{theorem:tychonoff_x_bjnp} is disjointly supported (by Lemma \ref{lemma:basic_jnp}.(3)). Also, applying Theorem \ref{theorem:main_theorem_BJNP_prob} and the methods similar to those used in the proof of Theorem \ref{theorem:tychonoff_x_bjnp}, one can obtain the following sufficient condition for a space $X$ to have the BJNP.

\begin{corollary}\label{cor:tychonoff_x_prob_meas}
Suppose $X$ is a space. Let $Y$ be its countable subset and $x\in\ol{Y}\sm Y$. Let $F$ be a free filter on $\omega$ associated to $\mathfrak{F}_X(x,Y)$.

Then, the space $N_F$ has the BJNP if and only if $X$ admits a (disjointly supported) sequence $\seqn{\mu_n}$ of finitely supported probability measures such that  $\supp\big(\mu_n\big)\sub Y$ for every $n\io$ and $\lim_{n\to\infty}\mu_n(V)=1$ for every $V\in\mathfrak{F}_X(x,Y)$.

In particular, if $X$ admits a sequence $\seqn{\mu_n}$ as above, then $X$ has the BJNP.\noproof
\end{corollary}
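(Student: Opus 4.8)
The plan is to derive the corollary directly from Corollary \ref{cor:nf_bjnp_char} and Theorem \ref{theorem:tychonoff_x_bjnp}, transporting probability measures across the injection $\varphi_f\colon N_F\to Z_X(x,Y)$ in exactly the way BJN-sequences were transported in the proof of Theorem \ref{theorem:tychonoff_x_bjnp}. Fix a bijection $f\colon\omega\to Y$ witnessing that $F$ is associated to $\mathfrak{F}_X(x,Y)$. Throughout I will use two elementary facts: since $f$ is a bijection, $A\in F$ if and only if $f[A]\in\mathfrak{F}_X(x,Y)$; and $Y\in\mathfrak{F}_X(x,Y)$, because $Y=X\cap Y$ and $X$ is itself a neighborhood of $x$.

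First I would prove the implication from $N_F$ having the BJNP to the existence of the required sequence on $X$. Invoking Corollary \ref{cor:nf_bjnp_char}, fix a (disjointly supported) sequence $\seqn{\nu_n}$ of finitely supported probability measures on $N_F$ with $\supp(\nu_n)\sub\omega$ and $\lim_{n\to\infty}\nu_n(A)=1$ for every $A\in F$. Pushing each $\nu_n$ forward along $\varphi_f$, i.e. setting $\mu_n(B)=\nu_n\big(\varphi_f^{-1}[B\cap Z_X(x,Y)]\big)$ for Borel $B\sub X$, yields finitely supported probability measures on $X$ whose supports lie in $f[\omega]=Y\sub Z_X(x,Y)$ and which remain disjointly supported because $f$ is injective. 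For $V\in\mathfrak{F}_X(x,Y)$ one has $\mu_n(V)=\nu_n\big(f^{-1}[V]\big)\to1$, since $f^{-1}[V]\in F$; this is exactly the desired convergence.

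For the converse I would start from a sequence $\seqn{\mu_n}$ on $X$ as in the statement. The one genuinely new point, and the only real obstacle, is that the $\mu_n$ may charge the limit point $x$, whereas Corollary \ref{cor:nf_bjnp_char} demands probability measures supported inside $\omega$. This is handled by observing that, because $Y\in\mathfrak{F}_X(x,Y)$, the hypothesis forces $\mu_n(Y)\to1$ and hence $\mu_n(\{x\})\to0$. Discarding the finitely many indices with $\mu_n(Y)=0$, I would renormalize to $\tilde\mu_n=(\mu_n\rstr Y)/\mu_n(Y)$, a probability measure supported in $Y$ satisfying $\tilde\mu_n(V)=\mu_n(V)/\mu_n(Y)\to1$ for every $V\in\mathfrak{F}_X(x,Y)$. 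Pulling back through $f$, by setting $\nu_n(\{k\})=\tilde\mu_n(\{f(k)\})$ for $k\io$ and $\nu_n(\{p_F\})=0$, produces finitely supported probability measures on $N_F$ supported in $\omega$ with $\nu_n(A)=\tilde\mu_n(f[A])\to1$ for every $A\in F$; Corollary \ref{cor:nf_bjnp_char} then gives that $N_F$ has the BJNP.

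Finally, the ``in particular'' clause follows immediately: once a sequence as in the statement is produced, the equivalence just established shows that $N_F$ has the BJNP, and the last assertion of Theorem \ref{theorem:tychonoff_x_bjnp} then yields that $X$ itself has the BJNP. The optional disjoint-support refinement is free in both directions, coming either from the remark following Corollary \ref{cor:nf_bjnp_char} or from the injectivity of $f$. Beyond the bookkeeping around the vanishing mass at $x$ described above, I expect no serious difficulty, the argument being a routine transfer of Corollary \ref{cor:nf_bjnp_char} through $\varphi_f$.
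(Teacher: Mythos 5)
Your proposal is correct and follows essentially the route the paper intends: the corollary is stated with no written proof precisely because it is meant to be obtained by combining Corollary \ref{cor:nf_bjnp_char} with the transfer-through-$\varphi_f$ technique of Theorem \ref{theorem:tychonoff_x_bjnp}, which is exactly what you do. Your explicit handling of the possible atom at $x$ via $Y\in\mathfrak{F}_X(x,Y)$ and renormalization is the right way to fill in the one detail the paper leaves implicit.
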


The next corollary applies in particular to the situation when the given mapping $\varphi$ is an injection or an embedding.
\begin{corollary}\label{cor:cont_nf_x}
Suppose $X$ is a space. Assume that $G$ is a free filter on $\omega$. Let $\varphi\colon N_G\to X$ be a continuous function such that $\varphi^{-1}\big(\varphi\big(p_F\big)\big)=\big\{p_F\big\}$. Then, if $N_G$ has the BJNP, then $X$ has the BJNP, too.
\end{corollary}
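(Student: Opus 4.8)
The plan is to push a witnessing sequence on $N_G$ forward along $\varphi$ and to repair the possible loss of mass by adding a point mass at the image $\varphi(p_G)$ (the point denoted $p_F$ in the statement is of course the distinguished point $p_G$ of $N_G$). First I would invoke Corollary \ref{cor:nf_bjnp_char} together with the reformulation (2') recorded after its proof: since $N_G$ has the BJNP, there is a sequence $\seqn{\mu_n}$ of finitely supported probability measures on $N_G$ with $\supp\big(\mu_n\big)\sub\omega$ for every $n\io$ and such that $\mu_n(h)\to h\big(p_G\big)$ for every $h\in C_p^*\big(N_G\big)$.

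Next I would form the pushforward measures $\mu_n'=\varphi_*\mu_n$ on $X$, defined by $\mu_n'(B)=\mu_n\big(\varphi^{-1}[B]\big)$ for every Borel $B\sub X$. Writing $\mu_n=\sum_{k\in S_n}a_k\delta_k$ with $S_n\sub\omega$ finite, $a_k\ge 0$ and $\sum_k a_k=1$, this gives $\mu_n'=\sum_{k\in S_n}a_k\delta_{\varphi(k)}$, a finitely supported probability measure on $X$ with $\supp\big(\mu_n'\big)\sub\varphi[\omega]$ and $\big\|\mu_n'\big\|=1$ (there is no cancellation among the $\omega$-points, as all coefficients are non-negative). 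I would then set
\[\nu_n=\tfrac12\big(\delta_{\varphi(p_G)}-\mu_n'\big).\]

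The key point — and the only place where the hypothesis $\varphi^{-1}\big(\varphi(p_G)\big)=\big\{p_G\big\}$ is used — is the computation of $\big\|\nu_n\big\|$, which is the step I expect to be the main (albeit minor) obstacle. Since $\varphi^{-1}\big(\varphi(p_G)\big)=\big\{p_G\big\}$, no point of $\omega$ is sent by $\varphi$ to $\varphi(p_G)$, so $\varphi(p_G)\notin\varphi[\omega]\supseteq\supp\big(\mu_n'\big)$. Hence $\delta_{\varphi(p_G)}$ and $\mu_n'$ have disjoint finite supports, and therefore $\big\|\nu_n\big\|=\tfrac12\big\|\delta_{\varphi(p_G)}\big\|+\tfrac12\big\|\mu_n'\big\|=\tfrac12+\tfrac12=1$. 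Without this assumption the pushforward could collide with the point mass at $\varphi(p_G)$ and the norms could drop below $1$, which is exactly why the disjointness afforded by the hypothesis is essential.

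Finally I would verify the weak$^*$ convergence. For any $g\in C_p^*(X)$ the composition $g\circ\varphi$ lies in $C_p^*\big(N_G\big)$, by continuity of $\varphi$ and boundedness of $g$, and
\[\nu_n(g)=\tfrac12 g\big(\varphi(p_G)\big)-\tfrac12\int_X g\,d\mu_n'=\tfrac12 g\big(\varphi(p_G)\big)-\tfrac12\int_{N_G}(g\circ\varphi)\,d\mu_n.\]
By the choice of $\seqn{\mu_n}$ the last integral converges to $(g\circ\varphi)\big(p_G\big)=g\big(\varphi(p_G)\big)$, whence $\nu_n(g)\to 0$. Thus $\seqn{\nu_n}$ is a sequence of finitely supported measures with $\big\|\nu_n\big\|=1$ and $\nu_n(g)\to 0$ for every $g\in C_p^*(X)$, i.e.\ a BJN-sequence on $X$, so $X$ has the BJNP.
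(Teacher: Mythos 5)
Your proof is correct, but it takes a different route from the paper's. The paper derives the corollary from its general machinery: it sets $Y=\varphi[\omega]$, $x=\varphi\big(p_G\big)$, forms the filter $F$ associated to the neighborhood trace $\mathfrak{F}_X(x,Y)$, observes that continuity of $\varphi$ gives $F\le_K G$, invokes Proposition \ref{prop:rk_bjnp}.(2) to transfer the BJNP from $N_G$ to $N_F$, and then applies Theorem \ref{theorem:tychonoff_x_bjnp}. You instead argue directly: take the probability-measure witness from Corollary \ref{cor:nf_bjnp_char} (in the form (2')), push it forward along $\varphi$, and pair it with $\delta_{\varphi(p_G)}$. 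Your argument is more self-contained and shorter, and it handles one point with some care that deserves mention: since $\varphi$ need not be injective on $\omega$, pushing forward a general signed BJN-sequence could produce cancellation and drop the norms below $1$; by working with the non-negative measures of Corollary \ref{cor:nf_bjnp_char} you avoid this entirely, and the hypothesis $\varphi^{-1}\big(\varphi(p_G)\big)=\big\{p_G\big\}$ is used exactly where it must be, to keep $\delta_{\varphi(p_G)}$ and the pushforward disjointly supported so that $\big\|\nu_n\big\|=1$. What the paper's route buys in exchange is the explicit identification of the associated filter $F$ on the image, with $F\le_K G$, which is reused elsewhere (e.g.\ in Example \ref{example:schachermayer}); your construction does not produce that intermediate object, but as a proof of the stated corollary it is complete.
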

\begin{proof}
Assume that $N_G$ has the BJNP. Put $Y=\varphi[\omega]$ and $x=\varphi\big(p_F\big)$. It follows that $Y$ is an infinite countable subset of $X$ and that $x\in\ol{Y}\sm Y$. Let $F$ be a filter on $\omega$ associated to $\mathfrak{F}_X(x,Y)$. Since $\varphi$ is continuous, $F$ is Kat\v{e}tov below $G$ (cf. the discussion before Proposition \ref{prop:ordering_nf}). By Proposition \ref{prop:rk_bjnp}.(2), $N_F$ has the BJNP, so by Theorem \ref{theorem:tychonoff_x_bjnp} $X$ has the BJNP, too.
\end{proof}

The following result is a consequence of Corollary \ref{cor:main_corollary_2continuum}.(b), Remark \ref{remark:two_to_cont_many_nonhomeo}, and Corollary \ref{cor:cont_nf_x}.

\begin{corollary}\label{cor:continuum_many_nonhomeomorphic}
There exists a family $\fF$ of $2^\frakc$ many pairwise non-homeomorphic countable infinite spaces with exactly one non-isolated point and without any non-trivial convergent sequences, and such that if any $Y\in\fF$ homeomorphically embeds into a space $X$, then $X$ has the BJNP. \noproof
\end{corollary}

The next corollary follows from Theorem \ref{theorem:nf_bjnp_z} and Corollary \ref{cor:cont_nf_x}.

\begin{corollary}\label{cor:cont_nf_x_fr_z}
Let $X$, $G$, and $\varphi$ be as in Corollary \ref{cor:cont_nf_x}. 
If $G\le_K \zZ^*$, then $X$ has the BJNP.\noproof
\end{corollary}

Theorem \ref{theorem:tychonoff_x_bjnp}, its counterpart for the JNP (which can be justified in a similar way), and their consequences described above can be used to obtain sufficient conditions implying that for a given space $X$ the spaces $C_p(X)$ and $C_p^*(X)$ contain complemented copies of the space $(c_0)_p$, as well as that for a given compact space $K$ the Banach space $C(K)$ does not have the Grothendieck property, and hence to prove Corollaries \ref{cor:corollary_C}--\ref{cor:corollary_F} from Introduction. We will need here two Important Results:
\begin{enumerate}[({I}R.1)]
	\item if $X$ is an infinite space, then the space $C_p(X)$ (resp. $C_p^*(X)$) contains a complemented copy of $(c_0)_p$ if and only if $X$ has the JNP (resp. the BJNP) (see \cite[Theorem 1]{BKS1} and \cite[Theorem 4.4]{KMSZ}), and 
	\item if $K$ is an infinite compact space, then the space $C(K)$ does not have the $\ell_1$-Grothendieck property if and only if $K$ has the JNP (see \cite[Theorem 4.1]{KSZgroth}).
\end{enumerate}
Recall that the $\ell_1$-Grothendieck property is a natural weakening of the Grothendieck property, which can be described as follows: for a compact space $K$, the space $C(K)$ has \textit{the $\ell_1$-Grothendieck property} if, for every sequence $\seqn{\mu_n}$ of finitely supported Radon measures on $K$ such that $\mu_n(f)\to 0$ for every $f\in C(K)$, we also have $\mu_n(B)\to0$ for every Borel subset $B\sub K$. Obviously, the lack of the $\ell_1$-Grothendieck property implies the lack of the Grothendieck property, but the converse is false (see \cite[Section 5]{KSZgroth}).

\begin{corollary}\label{cor:c0p_compl}
Let $X$ be a space. If there is a countable subset $Y\sub X$ and a point $x\in\ol{Y}\sm Y$ such that, for a free filter $F$ on $\omega$ associated to the filter $\mathfrak{F}_X(x,Y)$, the space $N_F$ has the JNP (resp. the BJNP), then the space $C_p(X)$ (resp. $C_p^*(X)$) contains a complemented copy of the space $(c_0)_p$. \noproof
\end{corollary}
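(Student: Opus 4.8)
The plan is to reduce the statement directly to the characterizations of the (bounded) Josefson--Nissenzweig property already at our disposal, so that no fresh measure-theoretic construction is required. First I would check that the hypotheses force $X$ to be infinite: since $X$ is Tychonoff (hence Hausdorff) and $x\in\ol{Y}\sm Y$, the set $Y$ cannot be finite, because a finite subset of a Hausdorff space is closed and would satisfy $\ol{Y}=Y$, leaving $\ol{Y}\sm Y=\emptyset$. Thus $Y$ is an infinite countable subset of $X$, and all the notation of Theorems \ref{theorem:tychonoff_x_bjnp} and \ref{theorem:tychonoff_x_jnp} applies; in particular, fixing a bijection $f\colon\omega\to Y$, we may take $F$ to be $f$-associated to $\mathfrak{F}_X(x,Y)$.

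For the JNP case I would invoke Theorem \ref{theorem:tychonoff_x_jnp}: the assumption that $N_F$ has the JNP yields at once, via its concluding assertion, that $X$ has the JNP. Since $X$ is infinite, the Banakh--K\k{a}kol--\'Sliwa characterization \cite[Theorem 1]{BKS1} then gives that $C_p(X)$ contains a complemented copy of $(c_0)_p$. For the BJNP case I would proceed in complete parallel, applying Theorem \ref{theorem:tychonoff_x_bjnp} to obtain that $X$ has the BJNP, and then the $C_p^*$-analogue \cite[Theorem 4.4]{KMSZ} to produce a complemented copy of $(c_0)_p$ in $C_p^*(X)$.

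The argument is therefore a two-step chaining of previously established facts, and essentially no obstacle remains. The substantive work, namely transferring a (B)JN-sequence from $N_F$ across $\varphi_f$ to a (B)JN-sequence on $X$, has already been carried out inside Theorems \ref{theorem:tychonoff_x_bjnp} and \ref{theorem:tychonoff_x_jnp}, and the passage from the (B)JNP of $X$ to a complemented copy of $(c_0)_p$ is exactly the content of the external theorems of \cite{BKS1} and \cite{KMSZ}. The only point demanding a line of care is the implicit verification that $X$ is infinite, without which those quoted characterizations would not formally apply.
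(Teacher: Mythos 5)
Your argument is exactly the one the paper intends: the corollary is stated with no proof precisely because it is the two-step chaining of Theorem \ref{theorem:tychonoff_x_jnp} (resp.\ Theorem \ref{theorem:tychonoff_x_bjnp}) with the characterizations from \cite[Theorem 1]{BKS1} and \cite[Theorem 4.4]{KMSZ} quoted in the paragraph immediately preceding it. Your extra remark that $Y$ (hence $X$) must be infinite is a correct and harmless piece of bookkeeping, consistent with the standing convention that $\mathfrak{F}_X(x,Y)$ is only defined for infinite countable $Y$.
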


\begin{corollary}\label{cor:ell1_gr}
Let $K$ be a compact space. If there is a countable subset $Y\sub K$ and a point $x\in\ol{Y}\sm Y$ such that, for a free filter $F$ on $\omega$ associated to the filter $\mathfrak{F}_K(x,Y)$, the space $N_F$ has the BJNP, then the Banach space $C(K)$ does not have the $\ell_1$-Grothendieck property and hence it does not have the Grothendieck property. \noproof
\end{corollary}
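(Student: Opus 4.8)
The plan is to chain Theorem~\ref{theorem:tychonoff_x_bjnp} together with the compactness of $K$ and the cited characterization of the $\ell_1$-Grothendieck property, exactly paralleling the reasoning behind Corollary~\ref{cor:c0p_compl}. First I would invoke the ``in particular'' clause of Theorem~\ref{theorem:tychonoff_x_bjnp} applied with $X=K$: since $F$ is $f$-associated to $\mathfrak{F}_K(x,Y)$ for some bijection $f\colon\omega\to Y$ and $N_F$ has the BJNP by hypothesis, it follows at once that the space $K$ itself has the BJNP. I would record here the small bookkeeping point that the mere existence of a bijection $f\colon\omega\to Y$ forces $Y$ to be countably infinite, so that $K$ is an \emph{infinite} compact space; this is what makes the later result applicable.

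The second step is to upgrade the BJNP of $K$ to the JNP. Because $K$ is compact, it is pseudocompact, and as recalled in the introduction the JNP and the BJNP coincide on pseudocompact spaces (every $f\in C(K)$ is automatically bounded, so $C_p(K)=C_p^*(K)$ and a BJN-sequence is already a JN-sequence). Hence $K$ has the JNP.

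Finally I would apply result~2 quoted just before Corollary~\ref{cor:c0p_compl}, namely \cite[Theorem~6.7]{KSZ}: for an infinite compact space $K$, the space $C(K)$ fails the $\ell_1$-Grothendieck property if and only if $K$ has the JNP. Since $K$ is infinite and has the JNP, $C(K)$ does not have the $\ell_1$-Grothendieck property; and, as noted in the discussion preceding the corollary, the failure of the $\ell_1$-Grothendieck property entails the failure of the Grothendieck property, which finishes the argument. There is essentially no computational obstacle, as every tool is already in place; the only points demanding care are the observation that $Y$, being in bijection with $\omega$, is infinite (so that $K$ qualifies as an infinite compact space for \cite[Theorem~6.7]{KSZ}) and the pseudocompactness bridge $\mathrm{BJNP}\Leftrightarrow\mathrm{JNP}$ that converts the conclusion of Theorem~\ref{theorem:tychonoff_x_bjnp} into the hypothesis needed for the Grothendieck characterization.
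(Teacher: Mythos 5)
Your proposal is correct and follows exactly the argument the paper intends (the corollary is stated with no proof precisely because it is this immediate chain): Theorem \ref{theorem:tychonoff_x_bjnp} gives the BJNP of $K$, compactness upgrades this to the JNP, and the quoted characterization from \cite[Theorem 6.7]{KSZ} together with the remark that failure of the $\ell_1$-Grothendieck property implies failure of the Grothendieck property finishes the argument. Your bookkeeping observations (that $Y$ is countably infinite, hence $K$ is an infinite compact space) are exactly the right points to check.
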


%

\begin{proof}[\textbf{Proof of Corollaries \ref{cor:corollary_C}--\ref{cor:corollary_F}}] 
The corollaries follow immediately from results (IR.1) and (IR.2) and Corollaries \ref{cor:continuum_many_nonhomeomorphic} and \ref{cor:cont_nf_x_fr_z}.
\end{proof}

%


It is natural to ask whether Theorem \ref{theorem:tychonoff_x_bjnp} can be strengthened by dropping from the right hand side of the conclusion the condition that $\lim_{n\to\infty}\big\|\mu_n\rstr(Y\sm V)\|=0$ for every $V\in\mathfrak{F}_X(x,Y)$, that is, in other words, whether it is true that a space $X$ has the BJNP if and only if there exist an infinite countable subset $Y\sub X$ and a point $x\in\ol{Y}\sm Y$ such that for a filter $F$ associated to $\mathfrak{F}_X(x,Y)$ the space $N_F$ has the BJNP. Unfortunately, as the next example shows, it is not possible.

It is also worth noting that for disjoint countable subsets $Y_1$ and $Y_2$ of a space $X$ such that $Y_2\sub\ol{Y}_1\sm Y_1$ and a point $x\in X$ such that $x\in\ol{Y_1}\sm Y_1$ and $x\in\ol{Y_2}\sm Y_2$ it may be true that the space $Z_X\big(x,Y_1\big)$ does not have the BJNP while the space $Z_X\big(x,Y_2\big)$ contains a non-trivial sequence convergent to $x$ (see Corollary \ref{cor:sf_jnp_nf_no_bjnp} for a relevant example).

\begin{example}\label{example:schachermayer}
Let $\sS$ denote the Boolean subalgebra of $\wo$ such that, for every $A\in\wo$, $A\in\sS$ if and only if there is $K\io$ such that for every $k\ge K$ we have: $2k\in A$ if and only if $2k+1\in A$. It is easy to see that the Stone space $St(\sS)$ is a compactification of $\omega$.

The algebra $\sS$ was introduced by Schachermayer in \cite[Example 4.10]{Sch82} (cf. Remark \ref{rem:schachermayer_bereznitski}), where it was proved that its Stone space does not have the Grothendieck property and that the remainder $L=St(\sS)\sm\omega$ is homeomorphic to $\ostar$. Further properties of $\sS$ are studied in \cite[Section 5.1]{MSZ} (where its Stone space $St(\sS)$ is denoted by $K_B$), in particular, it is pointed there that $St(\sS)$ admits a JN-sequence $\seqn{\mu_n}$ such that $\supp\big(\mu_n\big)\sub\omega$ and $\big|\supp\big(\mu_n\big)\big|=2$ for every $n\io$ (consider simply the measures $\mu_n=\frac{1}{2}\big(\delta_{2n}-\delta_{2n+1}\big)$). We will now briefly show that $St(\sS)$ does not contain any infinite countable subspace $Y$ and point $x\in\ol{Y}\sm Y$ such that the space $N_F$, where $F$ is a free filter on $\omega$ associated to $\mathfrak{F}_{St(\sS)}(x,Y)$, has the BJNP. So, for the sake of contradiction, let us assume that such $Y$ and $x$ exist. Of course, $x\in L$, since otherwise $x$ would be isolated. 

Set $Z=Z_{St(\sS)}(x,Y)$. 
Let $F$ be a free filter on $\omega$ associated to $\mathfrak{F}_{St(\sS)}(x,Y)$. Since $N_F$ has the BJNP, by Corollary \ref{cor:tychonoff_x_prob_meas} there is a disjointly supported sequence $\seqn{\mu_n}$ of finitely supported probability measures on $Z$ such that $\supp\big(\mu_n\big)\sub Y$ for every $n\io$ and
\[\tag{$*$}\lim_{n\to\infty}\mu_n(V)=1\]
for every $V\in\mathfrak{F}_{St(\sS)}(x,Y)$.

If $Y\cap L$ is infinite, then, by Corollary \ref{cor:tychonoff_x_prob_meas} and the fact that $Z\cap L$ does not have the BJNP (being contained in $L\cong\ostar$), it follows that $\mu_n(L)=\big\|\mu_n\rstr L\big\|\to0$ as $n\to\infty$. Thus, without loss of generality, we may assume that $Y\sub\omega$. By passing to a subsequence, we can find a strictly increasing sequence $\seqn{k_n\io}$ such that
\[\supp\big(\mu_n\big)\sub\big[2k_n,2k_{n+1}\big)\sub\omega\]
for every $n\io$.

Put:
\[E=\bigcup_{n\io}\big[2k_{2n},2k_{2n+1}\big)\quad\text{and}\quad O=\omega\sm E=\bigcup_{n\io}\big[2k_{2n+1},2k_{2n+2}\big).\]
Both $E$ and $O$ are elements of the algebra $\sS$. Since $x$ is an ultrafilter on $\sS$, either $E\in x$ or $O\in x$. Without loss of generality we may assume that $E\in x$, so $Y\cap E\in\mathfrak{F}_{St(\sS)}(x,Y)$. We then have:
\[\liminf_{n\to\infty}\mu_n(Y\cap E)=\liminf_{n\to\infty}\mu_n\big(Y\sm(Y\cap O)\big)\le\liminf_{n\to\infty}\mu_{2n+1}(\emptyset)=0,\]
which contradicts ($*$) and finishes the proof.
\end{example}

\begin{remark}\label{rem:schachermayer_bereznitski}
It is easy to see that the Stone space $St(\sS)$ does not contain any non-trivial convergent sequences. Since it admits a JN-sequence $\seqn{\mu_n}$ such that $\big|\supp\big(\mu_n\big)\big|=2$ for every $n\io$, it follows from Proposition \ref{prop:sf_convseq_supps2} that $St(\sS)$ is not homeomorphic to any space of the form $S_F$ where $F$ is a free filter on $\omega$. In particular, by Proposition \ref {prop:sf_comes_from_bo_converse}, $St(\sS)$ is not homeomorphic to any space of the form $\bo/\fF$ where $\fF$ is a non-empty closed subset of $\ostar$.

The space $St(\sS)$ can be however described in another way. Denote $\E=\{2n\colon n\io\}$ and $\O=\{2n+1\colon n\io\}$, and set $B_{\E}=\ol{\E}^{\bo}\sm\E$ and $B_{\O}=\ol{\O}^{\bo}\sm\O$. Of course, $B_{\E}\cong B_{\O}\cong\ostar$. The bijection $h\colon\E\to\O$, defined for each $n\io$ by $h(2n)=2n+1$, gives rise to the natural homeomorphism $H\colon B_{\E}\to B_{\O}$. We define an equivalence relation $R$ on $\bo$ by declaring its equivalence classes in the following way: for each $n\io$ set $[n]_R=\{n\}$, and for each $x\in B_{\E}$ set $[x]_R=\{x,H(x)\}$. Then, one can show that $St(\sS)$ is homeomorphic to the quotient space $\bo/R$.

The space $\bo/R$ was first studied by Bereznitski\u{\i} \cite{Ber71}.  Arkhangel’ski\u{\i} asked whether there exists a compact infinite space $K$ such that the space $C_p(K)$ is not linearly homeomorphic to the product $C_p(K)\times\R$, and suggested that $\bo/R$ might be such a space (see \cite[page 93]{Ark87}). In \cite{Mar97} the first author claimed without a proof that $C_p(\bo/R)$ is linearly homeomorphic to $C_p(\bo/R)\times\R$, and provided a proper example of $K$ for which the spaces $C_p(K)$ and $C_p(K)\times\R$ are not linearly homeomorphic.

We are now able to justify briefly the statement that $C_p(\bo/R)$ is linearly homeomorphic to $C_p(\bo/R)\times\R$. Since $\bo/R$ has the JNP, it follows by \cite[Theorem 1]{BKS1} that the space $C_p(\bo/R)$ contains a complemented copy of the space $(c_0)_p$, that is, there exists a closed linear subspace $Y$ of $C_p(\bo/R)$ such that $C_p(\bo/R)$ is linearly homeomorphic to the product topological vector space $Y\times(c_0)_p$. Since $(c_0)_p$ is linearly homeomorphic to $(c_0)_p\times\R$, we get that $C_p(\bo/R)$ is linearly homeomorphic to $Y\times(c_0)_p\times\R$ and hence to $C_p(\bo/R)\times\R$.
\end{remark}

\end{document}